\newcommand\myurl[1]{\url{#1}}
\newenvironment{customthm}[1]
  {\innercustomthm}
  {\endinnercustomthm}
\newenvironment{customcor}[1]
  {\innercustomcor}
  {\endinnercustomcor}
\newtheorem{thm}{Theorem}[section]
\newtheorem{prop}[thm]{Proposition}
\newtheorem{define}[thm]{Definition}
\newtheorem{lem}[thm]{Lemma}
\newtheorem{rem}[thm]{Remark}
\newcommand{\ve}[1]{\boldsymbol{\mathbf{#1}}}
\newcommand{\R}{\mathbb{R}}
\newcommand{\Z}{\mathbb{Z}}
\renewcommand{\d}{\partial}
\renewcommand{\subset}{\subseteq}
\renewcommand{\tilde}{\widetilde}
\renewcommand{\bar}{\overline}
\renewcommand{\hat}{\widehat}
\newcommand{\iso}{\cong}
\DeclareMathOperator{\id}{{id}}
\DeclareMathOperator{\im}{{im}}
\DeclareMathOperator{\Mod}{{mod}}
\DeclareMathOperator{\PSL}{{PSL}}
\DeclareMathOperator{\Spin}{{Spin}}
\DeclareMathOperator{\Sym}{{Sym}}
\DeclareMathOperator{\Tors}{{Tors}}
\newcommand{\bD}{\mathbb{D}}
\newcommand{\bL}{\mathbb{L}}
\newcommand{\bT}{\mathbb{T}}
\newcommand{\cA}{\mathcal{A}}
\newcommand{\cC}{\mathcal{C}}
\newcommand{\cD}{\mathcal{D}}
\newcommand{\cF}{\mathcal{F}}
\newcommand{\cH}{\mathcal{H}}
\newcommand{\cJ}{\mathcal{J}}
\newcommand{\cL}{\mathcal{L}}
\newcommand{\cM}{\mathcal{M}}
\newcommand{\cN}{\mathcal{N}}
\newcommand{\cP}{\mathcal{P}}
\newcommand{\cT}{\mathcal{T}}
\newcommand{\frP}{\mathfrak{P}}
\newcommand{\frj}{\mathfrak{j}}
\newcommand{\frs}{\mathfrak{s}}
\newcommand{\frt}{\mathfrak{t}}
\newcommand{\1}{\mathbf{1}}
\title{Quasi-stabilization and basepoint moving maps in link Floer homology}
\author{Ian Zemke}
\address{Department of Mathematics\\ UCLA\\ 520 Portola Plaza, Los Angeles, CA 90025, USA}
\email{ianzemke@math.ucla.edu}
\begin{document}

\begin{abstract}We analyze the effect of adding, removing, and moving basepoints on link Floer homology. We prove that adding or removing basepoints via a procedure called quasi-stabilization is a natural operation on a certain version of link Floer homology, which we call $CFL_{UV}^\infty$. We consider the effect on the full link Floer complex of moving basepoints,  and develop a simple calculus for moving basepoints on the link Floer complexes. We apply it to compute the effect of several diffeomorphisms corresponding to moving basepoints. Using these techniques we prove a conjecture of Sarkar about the map on the full link Floer complex induced by a finger move along a link component.
\end{abstract}
\maketitle

\tableofcontents
\section{Introduction} Introduced by Ozsv\'{a}th and Szab\'{o}, Heegaard Floer homology associates algebraic invariants to closed three manifolds. To a three manifold $Y$ with embedded nullhomologous knot $K$, there is a refinement of Heegaard Floer homology called knot Floer homology, introduced by Ozsv\'{a}th and Szab\'{o} in \cite{OSKnots} and independently by Rasmussen in \cite{RasmussenCFr}. A similar invariant was defined by Ozsv\'{a}th and Szab\'{o} for links in \cite{OSMulti}.

To a nullhomologous knot $K\subset Y$ with two basepoints $z$ and $w$ and a relative $\Spin^c$ structure $\frt\in \Spin^c(Y,K)$, in \cite{OSKnots}, Ozsv\'{a}th and Szab\'{o} define  a $\Z\oplus \Z$-filtered chain complex $CFK^\infty(Y,K,w,z,\frt)$. The $\Z\oplus \Z$-filtered chain homotopy type of $CFK^\infty(Y,K,w,z,\frt)$ is an invariant of the data $(Y,K,w,z,\frt)$.

One of the nuances of Heegaard Floer homology is the dependence on basepoints. In the case of closed three manifolds, if $\ve{w}\subset Y$ is a collection of basepoints, $w\in \ve{w}$ and $\gamma$ is a curve in $\pi_1(Y,w)$, then one can consider the diffeomorphism $\phi_\gamma$ resulting from a finger move along $\gamma$. According to \cite{JTNaturality}, the based mapping class group $MCG(Y,w)$ acts on $CF^\circ(Y,\ve{w},\frs)$ and hence there is an induced map $(\phi_\gamma)_*$ on the closed three manifold invariants, $CF^\circ(Y,\ve{w},\frs)$, which is a $\Z_2[U]$-equivariant chain homotopy type. In \cite{HFplusTQFT}, the author computes the equivariant chain homotopy type of $(\phi_\gamma)_*$ to be
\[(\phi_\gamma)_*\simeq 1+[\gamma]\Phi_w,\] where $[\gamma]$ is the $\Lambda^* (H_1(Y;\Z)/\Tors)$ action and $\Phi_w$ is a map which we will describe below.

In this paper, we consider the analogous question about basepoint dependence for link Floer homology. In link Floer homology, the basepoints are constrained to be on the link component, so the analogous operation is to consider the map on link Floer homology induced by a finger move $\varsigma$ around a link component, in the positive direction according to the links orientation. Using grid diagrams, Sarkar computes in \cite{Smovingbasepoints} the map associated to the diffeomorphism $\varsigma$ on a certain version of link Floer homology (the associated graded complex) for links in $S^3$. For links in arbitrary three manifolds, and for the induced map on the full link Floer complex, he only conjectures the formula. We prove his formula in full generality (Theorem \ref{thm:B}), but before we state that theorem we will provide a brief description of the complexes and maps which appear.

We will work with a slightly different version of $CFL^\infty$ than the one which most often appears in the literature. To a multibased link $\bL=(L,\ve{w},\ve{z})$ inside of $Y$, as well as a $\Spin^c$ structure $\frs\in \Spin^c(Y)$, we construct a chain complex \[CFL_{UV}^\infty(Y,\bL,\frs),\] which is a module over the polynomial ring $\Z_2[U_{\ve{w}},V_{\ve{z}}]$, generated by variables $U_w$ with $w\in \ve{w}$ and $V_z$ with $z\in \ve{z}$. The module $CFL_{UV}^\infty(Y,\bL,\frs)$ has generators of the form \[\ve{x}\cdot U_{\ve{w}}^I V_{\ve{z}}^J=\ve{x}\cdot U_{w_1}^{i_1}\cdots U_{w_n}^{i_n} V_{z_1}^{j_1}\cdots V_{z_n}^{j_n},\]  for multi-indices $I=(i_1,\dots, i_n)$ and $J=(j_1,\dots, j_n)$, though we identify two variables $V_z$ and $V_{z'}$ if $z$ and $z'$ are on the same link component. As such, $CFL_{UV}^\infty(Y,\bL,\frs)$ has a filtration by $\Z^{|\ve{w}|}\oplus \Z^{|L|}$  given by filtering over powers of the variables, where $|L|$ denotes the set of components of $L$.

As with the free stabilization maps from \cite{HFplusTQFT}, to define functorial maps corresponding to adding or removing basepoints in link Floer homology, we must work with colored complexes. A coloring $(\sigma,\frP)$ of a link with basepoints, $(L,\ve{w},\ve{z})$, is a set $\frP$ indexing a collection of formal variables, together with a map $\sigma:\ve{w}\cup \ve{z}\to \frP$ which maps all $\ve{z}$-basepoints on a component of $L$ to the same color. Given a coloring $(\sigma, \frP)$ of a link $\bL=(L,\ve{w},\ve{z})$, we create a $\Z_2[U_\frP]$-chain complex 
\[CFL_{UV}^\infty(Y,\bL,\sigma, \frP,\frs).\] The powers of the $U_\frP$ variables yield a filtration by $\Z^{\frP}$, which we call the $\frP$-filtration.

In the context link Floer homology, analogous to adding or removing a free basepoint in a closed 3-manifold, one can add or remove a pair of adjacent basepoints, $w$ and $z$, on a link component. Some authors refer to this operation as a ``special stabilization''. Manolescu and Ozsv\'{a}th consider certain questions about the operation in \cite{MOIntSurg}, calling it ``quasi-stabilization'', which is the phrase we will use. A full description and proof of naturality of the operation has not been completed, so we do that in this paper:

\begin{customthm}{A}\label{thm:A} Suppose $z$ and $w$ are new basepoints on a link $\bL=(L,\ve{w},\ve{z})$, ordered so that $w$ comes after $z$, which aren't separated by any basepoints in $\ve{w}$ or $\ve{z}$. If $\sigma:\ve{w}\cup \ve{z}\to \frP$ is a coloring which is extended by $\sigma':\ve{w}\cup \ve{z}\cup \{w,z\}\to \frP$, then there are $\frP$-filtered $\Z_2[U_{\frP}]$-chain maps
\[S_{w,z}^+:CFL^\infty_{UV}(Y,L,\ve{w},\ve{z},\sigma,\frP,\frs)\to CFL^\infty_{UV}(Y,L,\ve{w}\cup \{w\},\ve{z}\cup \{z\},\sigma',\frP,\frs)\] and 
\[S_{w,z}^-:CFL^\infty_{UV}(Y,L,\ve{w}\cup \{w\},\ve{z}\cup \{z\},\sigma',\frP,\frs)\to CFL^\infty_{UV}(Y,L,\ve{w},\ve{z},\sigma,\frP,\frs),\] which are well defined invariants, up to $\frP$-filtered, $\Z_2[U_\frP]$-equivariant chain homotopy. If $z$ comes after $w$, there are maps $S_{z,w}^+$ and $S_{z,w}^-$ defined analogously.
\end{customthm}

Following \cite{Smovingbasepoints}, we consider endomorphisms $\Phi_w$ and $\Psi_z$ of $CFL_{UV}^\infty(Y,\bL,\frs)$ (denoted $\Phi_{i,j}$ and $\Psi_{i,j}$ in his notation). We can think of the maps $\Phi_w$ and $\Psi_z$ as formal derivatives of the differential $\d$ with respect to the variables $U_w$ and $V_z$ respectively.  The maps $\Phi_w$ and $\Psi_z$ are invariants of $CFL_{UV}^\infty(Y,\bL,\sigma, \frP,\frs)$ up to $\frP$-filtered chain homotopy.

The maps $\Psi_z$ can be thought of as an analog for the relative homology maps defined in \cite{HFplusTQFT} for the closed three manifold invariants, since they play the role in the basepoint moving maps for link Floer homology that the relative homology maps introduced in \cite{HFplusTQFT} played in the basepoint moving maps for the closed three-manifolds invariants. Indeed the objects $CFL_{UV}^\infty(Y,\bL,\frs)$ and the maps $\Psi_z$ and $S_{w,z}^{\pm}$ fit into the framework of a ``graph TQFT'' for surfaces embedded in four-manifolds with some extra decoration, similar to the TQFT for $\hat{HFL}$ constructed using sutured Floer homology by Juh\'{a}sz in \cite{JCob} and considered further in \cite{JMConcordance} for concordances. Such a TQFT construction for $CFL_{UV}^\infty$ will appear in a future paper.

 We finally state Sarkar's conjecture, cast into the framework of $CFL^\infty_{UV}(Y,\bL, \sigma,\frP,\frs)$:

\begin{customthm}{B}\label{thm:B}
 Suppose that $\bL=(L,\ve{w},\ve{z})$ is a multibased link in an arbitrary 3-manifold $Y$ and $K$ is a component of $L$. Suppose that the basepoints on $K$ are $z_1,w_1,\dots, z_n, w_n$. Letting $\varsigma$ denote the diffeomorphism resulting from a finger move around a link component $K$, the induced map $\varsigma_*$ on $CFL_{UV}^\infty(Y,\bL,\sigma,\frP,\frs)$ has the $\frP$-filtered equivariant chain homotopy type
\[\varsigma_*\simeq 1+\Phi_K\Psi_K\] where 
\[\Phi_K= \sum_{j=1}^n \Phi_{w_j}\qquad \text{ and } \qquad \Psi_K=\sum_{j=1}^n \Psi_{z_j}.\]
\end{customthm}

Sarkar's conjecture for the effect on the filtered link Floer complex, which we will denote by $CFL^\infty(Y,\bL,\frt)$ for $\frt$ a relative $\Spin^c(Y,L)$ structure, follows by setting $(\sigma,\frP)$ to be the trivial coloring (i.e. $\frP=\ve{w}\cup |L|$ and $\sigma:(\ve{w}\cup \ve{z})\to (\ve{w}\cup |L|)$ the natural map) since the complex $CFL^\infty(Y,\bL,\frt)$ becomes a $\Z_2$-subcomplex of $CFL^{\infty}_{UV}(Y,\bL,\sigma, \frP,\frs)$ which is preserved by $\varsigma_*$, where $\frs$ is the underlying $\Spin^c$ structure associated to the relative $\Spin^c$ structure $\frt$.

There are several other formulations of the above conjecture for different versions of link Floer homology. For example, the conjectured formula for $\varsigma_*$ on $CFK^\infty(K)$ for $K\subset S^3$ is useful for computations in the involutive Heegaard Floer homology theory developed by Hendricks and Manolescu (see \cite{HMInvolutive}*{Sec. 6}). In their notation, for a choice of diagrams, the complex $CFK^\infty(K)$ for a knot $K\subset S^3$ is generated by elements of the form $[\ve{x},i,j]$ where $i$ and $j$ satisfy $A(\ve{x})=i-j$, and $A$ denotes the Alexander grading. In their notation, the $U$ map takes the form $U\cdot[\ve{x},i,j]=[\ve{x},i-1,j-1]$. Again the complex $CFK^\infty(K)$ is a $\Z_2$-subcomplex
\[CFK^\infty(K)\subset CFL_{UV}^\infty(S^3,K,w,z,\frs_0)\] which is preserved by $\varsigma_*$.  Recasting Theorem \ref{thm:B} into this notation and recalling that we are using coefficients in $\Z_2$, we arrive at the following:

\begin{customcor}{C}\label{cor:C}For a knot $K\subset S^3$, the involution $\varsigma_*$ on $CFK^\infty(S^3,K,w,z)$ takes the form
\[\varsigma_*\simeq 1+U^{-1}\bigg(\sum_{\substack{i,j\ge 0\\ i \text{ odd}}}\d_{ij}\bigg)\circ\bigg(\sum_{\substack{i,j\ge 0\\ j \text{ odd}}}\d_{ij}\bigg),\] if we write the differential $\d=\sum_{i,j\ge 0 } \d_{ij}$. Here $\d_{ij}$ decreases the first filtration by $i$, and the second filtration by $j$.
\end{customcor}

For other flavors, such as $\hat{CFL}$ or $CFL^-$, the formula conjectured by Sarkar also follows, since those cases correspond to setting various variables equal to zero in the formula for $\varsigma_*$.

In addition, we consider the effect of another diffeomorphism which naturally appears in the involutive setting (cf. \cite{HMInvolutive}). Suppose that $K$ is a component of a link $\bL$ and suppose that the basepoints of $K$ are $z_1,w_1,\cdots, z_n$ and $w_n$, appearing in that order. We can consider the diffeomorphism $\tau:(Y,\bL)\to (Y,\bL)$ which twists $(\frac{1}{n})^{\textrm{th}}$ of the way around $K$. The diffeomorphism $\tau$ maps $z_{i}$ and $w_i$ to $z_{i+1}$ and $w_{i+1}$ respectively, with indices taken modulo $n$. If $(\sigma,\frP)$ is a coloring of $\bL$ which sends all of the $\ve{w}$-basepoints on $K$ to the same color, then $\tau$ naturally induces an automorphism of
\[CFL_{UV}^\infty(Y,\bL,\sigma,\frP,\frs).\] Using the techniques of this paper, we can compute the following:

 \begin{customthm}{D}\label{thm:D}Suppose that $\bL$ is an embedded link in $Y$, and $K$ is a component of $\bL$ with basepoints $z_1,w_1,\dots, z_n,$ and $w_n$, appearing in that order. Assume that $n>1$. If $\tau$ denotes the diffeomorphism induced by twisting $(\frac{1}{n})^{\textrm{th}}$ of the way around $K$, then for a coloring where all $\ve{w}$-basepoints on $K$ have the same color, we have
 \[\tau_*\simeq (\Psi_{z_1}\Phi_{w_1}\Psi_{z_2} \Phi_{w_2} \cdots \Phi_{w_{n-1}} \Psi_{z_n} \Phi_{w_n})
 +(\Phi_{w_1}\Psi_{z_2} \Phi_{w_2} \cdots \Phi_{w_{n-1}} \Psi_{z_n}).\]
 \end{customthm}
\subsection{Organization}

In Section \ref{sec:p-filteredcomplexesanddefinitions} we define the complexes which will appear in this paper, as well as their algebraic structures as $\frP$-filtered chain complexes over certain modules. In Section \ref{sec:PhiPsi} we define the maps $\Phi_w$ and $\Psi_z$ which feature prominently in this paper. In Sections \ref{sec:prelimquasistab}-\ref{sec:quasistabilizationnatural} we define quasi-stabilization maps and show that they are independent of the choice of diagrams and auxiliary data, proving Theorem \ref{thm:A}. In Sections \ref{sec:freestabcommute} and \ref{sec:furtherrelations} we prove useful relations amongst the maps $\Psi_z$, $\Phi_w$ and $S_{w,z}^{\pm}$. In Section \ref{sec:basepointmovingmaps} we compute several maps associated with moving basepoints, proving Theorems \ref{thm:B} and \ref{thm:D}.

\subsection{Acknowledgments}

I would like to thank my advisor, Ciprian Manolescu, for helpful conversations, especially about quasi-stabilization. I would also like to thank Faramarz Vafaee and Robert Lipshitz for helpful conversations.

\section{\texorpdfstring{$\frP$}{P}-filtered complexes, and the complexes \texorpdfstring{$CFL^\infty$}{CFL\^{}infty} and \texorpdfstring{$CFL_{UV}^\infty$}{CFL\_UV\^{}infty}}
\label{sec:p-filteredcomplexesanddefinitions}
There are several variations of the full knot or link Floer complex, each with slightly different notation. We now list the complexes and will use, their module and filtration structure. We will define them more precisely, and also give a brief outline of $\frP$-filtered complexes, in the following subsections.

\subsection{Summary of the complexes which will appear}
In the following table, we list several different versions of the full link or knot Floer complex. We define $\bar{U}_w$ to be $U_wV_z$ for any $z$ in the link component containing $w$. Similarly $\Spin^c(Y,L)_\frs$ denotes the fiber over $\frs\in \Spin^c(Y)$ of the natural map $\Spin^c(Y,L)\to \Spin^c(Y)$ mapping a relative $\Spin^c$ structure to the underlying nonrelative $\Spin^c$ structure. We will primarily be interested in $CFL_{UV}^\infty$, since it is the construction which allows for natural maps to be most easily defined.

\begin{center}
    \begin{tabular}{ | l | l |  l   | l|}
    \hline Notation:& Input:& Structure:& Generators\\ \hline
     $CFL_{UV}^{\infty}(Y,\bL,\frs)$& $\frs\in \Spin^c(Y)$& $\Z^{|\ve{w}|}\oplus \Z^{|L|}$-filtered, $\Z_2[U_{\ve{w}}, V_{\ve{z}}]$-module&$U_{\ve{w}}^I V_{\ve{z}}^J\cdot \ve{x}$\\ \hline $CFL^\infty(Y,\bL,\frt)$& $\frt\in \Spin^c(Y,L)_\frs$ & $\Z^{|\ve{w}|}\oplus \Z^{|L|}$-filtered $\Z_2[\bar{U}_{\ve{w}}]$-module&
     $U_{\ve{w}}^I V_{\ve{z}}^J\cdot\ve{x}$ sat. Eq. \eqref{eq:relativeSpin^ccondition}\\ \hline
     $CFK^\infty(K)$ & $(K,w,z)\subset S^3$ & $\Z\oplus \Z$-filtered, $\Z_2[\bar{U}_w]$-module&$U_w^i V_z^j \cdot \ve{x}$ sat. $A(\ve{x})=j-i$\\
     \hline     $CFL^\infty(Y,\bL,\frs)$& $\frs\in \Spin^c(Y)$& $\Spin^c(Y,L)_\frs$-filtered, $\Z_2[U_{\ve{w}}]$-module&$U_{\ve{w}}^I \cdot \ve{x}$
     \\ \hline
    
    \end{tabular}
\end{center}

We will mostly use $CFL_{UV}^\infty(Y,\bL,\frs)$ and $CFL^\infty(Y,\bL,\frt)$, and will describe them more carefully in the following subsections. The object $CFL^\infty(Y,\bL,\frs)$ for $\frs\in \Spin^c(Y)$, introduced in \cite{OSMulti}, will not be useful for our purposes, because $\Psi_z$ and $\Phi_w$ are not filtered maps on this complex. Indeed we can think of $\Psi_z$ and $\Phi_w$ as being maps between $CFL^\infty(Y,\bL,\frt)$ for different choices of $\frt$. The object $CFL_{UV}^\infty(Y,\bL,\frs)$ is essentially just the $\Z_2$ direct sum over all $CFL^\infty(Y,\bL,\frt)$ ranging over relative $\Spin^c$ structures $\frt$ which restrict to $\frs$, so $CFL_{UV}^\infty$ becomes the most convenient object to work with.

\subsection{The complex \texorpdfstring{$CFL_{UV}^\infty(Y,\bL,\frs)$}{CFL\_UV(Y,L,s)}} Here we describe the uncolored complex $CFL_{UV}^\infty(Y,\bL,\frs)$. We first describe an intermediate object, $CFL_{UV,0}^\infty(Y,\bL,\frs)$.

Let $\Z_2[U_{\ve{w}},U_{\ve{w}}^{-1},V_{\ve{z}},V_{\ve{z}}^{-1}]$ denote the abelian algebra generated by variables $U_{w},U_w^{-1},V_{z}$ and $V_z^{-1}$ for $w\in \ve{w}$ and $z\in \ve{z}$. Given a diagram $\cH=(\Sigma, \ve{\alpha},\ve{\beta},\ve{w},\ve{z})$ for $(Y,L,\ve{w},\ve{z})$, we define $CFL_{UV,0}^\infty(\cH,\frs)$ to be the free \sloppy $\Z_2[U_{\ve{w}},U_{\ve{w}}^{-1},V_{\ve{z}},V_{\ve{z}}^{-1}]$-module  generated by $\ve{x}\in \bT_\alpha\cap \bT_\beta$ with $\frs_{\ve{w}}(\ve{x})=\frs$. We refer the reader to, e.g., \cite{OSMulti}, for the definition of a Heegaard diagram for a link, though we emphasize that in light of the results of \cite{JTNaturality}, we must assume that
 \[\ve{w}\cup \ve{z}\subset \Sigma\subset Y\] and that the embedding of $\Sigma$ in $Y$ is part of the data of a Heegaard splitting.

  We now define a map
 \[\d:CFL_{UV,0}^\infty(\cH,\frs)\to CFL_{UV,0}^\infty(\cH,\frs)\] by 
\[\d(\ve{x})=\sum_{\ve{y}\in \bT_\alpha\cap \bT_\beta} \sum_{\substack{\phi\in \pi_2(\ve{x},\ve{y})\\ \mu(\phi)=1}} \# \hat{\cM}(\phi)U_{\ve{w}}^{n_{\ve{w}}(\phi)}V_{\ve{z}}^{n_{\ve{z}}(\phi)}\cdot \ve{y}.\]

The map $\d$ does not square to zero,  but we do have the following:
\begin{lem}\label{lem:del^2=}The map $\d:CFL_{UV,0}^\infty(\cH,\frs)\to CFL_{UV,0}^\infty(\cH,\frs)$ satisfies
\[\d^2=\sum_{K\in |L|} (U_{w_{K,1}}V_{z_{K,1}}+V_{z_{K,1}}U_{w_{K,2}}+U_{w_{K,2}}V_{z_{K,2}}+\cdots V_{z_{K,n_K}}U_{w_{K,1}}),\]
 where $w_{K,1},z_{K,1},\dots, w_{K,n_K}, z_{K,n_K}$ are the basepoints on the link component $K$, in the order that they appear on $K$.\end{lem}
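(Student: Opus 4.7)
The plan is to compute $\d^2$ by the standard strategy from closed Heegaard Floer homology, namely counting ends of the one-dimensional moduli spaces $\hat{\cM}(\phi)$ for $\phi$ of Maslov index $2$, but now keeping careful track of the $U_{\ve{w}}$ and $V_{\ve{z}}$ weights. Since we are not modding out by any variables, boundary degenerations survive and account exactly for the failure of $\d^2$ to vanish.

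First I would set up the topological bookkeeping of regions. In a Heegaard diagram for $(Y,L,\ve{w},\ve{z})$, the link $L$ is reconstructed by joining basepoints via arcs: each component of $\Sigma\setminus\ve{\alpha}$ is connected by an arc in $U_\alpha$ running between a consecutive $w$- and $z$-basepoint (the arc along $L$ from some $w_{K,i}$ to $z_{K,i}$ lies in $U_\alpha$), and each component of $\Sigma\setminus\ve{\beta}$ contains the pair joined by an arc in $U_\beta$ (the arc from $z_{K,i}$ to $w_{K,i+1}$, indices mod $n_K$). Thus every $\alpha$-region contains exactly one consecutive pair $\{w_{K,i},z_{K,i}\}$ and every $\beta$-region contains exactly one consecutive pair $\{z_{K,i},w_{K,i+1}\}$, with all other basepoints separated from it by $\alpha$- respectively $\beta$-curves.

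Next I would invoke Gromov compactness in the form established by Ozsv\'ath--Szab\'o (and Lipshitz in the cylindrical formulation): for $\mu(\phi)=2$, the ends of $\hat{\cM}(\phi)$ consist of broken flow lines $\phi=\phi_1*\phi_2$ with both $\mu(\phi_i)=1$, together with $\alpha$- and $\beta$-boundary degenerations. The Maslov index formula $\mu(nA)=2n$ for an $\alpha$-boundary degeneration covering the $\alpha$-region $A$ with multiplicity $n$ forces $n=1$, so the relevant classes are pure boundary degenerations in $\pi_2(\ve{x},\ve{x})$ whose domain is a single $\alpha$- or $\beta$-region with multiplicity one. Ozsv\'ath--Szab\'o's theorem on boundary degenerations says that, mod $2$, the number of such degenerations emanating from any intersection point $\ve{x}$ is exactly $1$ for each region. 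Equating the total count of ends to zero mod $2$ and reading off the coefficient of $\ve{x}\in\bT_\alpha\cap\bT_\beta$, the broken-flow contribution --- which is precisely $\d^2(\ve{x})$ --- equals the weighted count of boundary degenerations: the $\alpha$-region containing $\{w_{K,i},z_{K,i}\}$ contributes $U_{w_{K,i}}V_{z_{K,i}}$, the $\beta$-region containing $\{z_{K,i},w_{K,i+1}\}$ contributes $V_{z_{K,i}}U_{w_{K,i+1}}$, and summing over all regions on all components yields the stated expression.

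The main obstacle is the region-identification step of the first paragraph: making sure that in the class of Heegaard diagrams being used --- in particular with $\ve{w}\cup\ve{z}\subset \Sigma\subset Y$ embedded as part of the Heegaard splitting data --- each $\alpha$- and $\beta$-region really contains exactly one adjacent pair of basepoints of the expected type, with no stray basepoints. Once this combinatorial fact is in hand, the counting argument is direct and the formula follows by grouping the boundary-degeneration contributions by link component and walking around $K$ in the order $w_{K,1},z_{K,1},\dots,w_{K,n_K},z_{K,n_K}$.
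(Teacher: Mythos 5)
Your proposal is correct and is essentially the paper's own argument: run the usual count of ends of Maslov index $2$ moduli spaces, observe that the only new ends are $\ve{\alpha}$- and $\ve{\beta}$-boundary degenerations covering a single region with multiplicity one, and apply the Ozsv\'ath--Szab\'o count together with the fact that each such region contains exactly one adjacent $w$,$z$ pair. The only cosmetic point is that the degeneration count is $1 \pmod 2$ only when there is more than one region ($\ell>1$); in the two-basepoint case it is $0$, but there the claimed formula also vanishes mod $2$, so the conclusion is unaffected (the paper treats this case separately).
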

\begin{proof}This follows from the usual proof that the differential squares to zero, now just counting boundary degenerations carefully. If there are exactly two basepoints, there are no boundary degenerations by \cite{OSMulti}*{Thm. 5.5}, and the above formula is satisfied. If there are more than two, then each $\ve{\alpha}$- and $\ve{\beta}$-degeneration has a unique holomorphic representative by \cite{OSMulti}*{Thm. 5.5} and each crosses over a $\ve{w}$-basepoint and a $\ve{z}$-basepoint. The formula follows.
\end{proof}

To get a chain complex, we must color $CFL_{UV,0}^\infty(Y,\bL,\frs)$ by setting certain variables equal. Let $C_\bL$ denote the ideal generated by elements of the form $V_{z_i}-V_{z_j}$ where $z_i$ and $z_j$ are in the same link component. We let $\cL=\Z_2[U_{\ve{w}},U_{\ve{w}}^{-1}, V_{\ve{z}},V_{\ve{z}}^{-1}]/C_{\bL}$.

We now define 
\[CFL_{UV}^\infty(\cH,\frs)=CFL_{UV,0}^\infty(\cH,\frs)\otimes_{\Z_2[U_{\ve{w}}, U_{\ve{w}}^{-1},V_{\ve{z}},V_{\ve{z}}^{-1}]} \cL.\] We have the following:

\begin{lem}The map $\d$ defined above is a differential on $CFL_{UV}^\infty(\cH,\frs)$, i.e. $\d^2=0$.
\end{lem}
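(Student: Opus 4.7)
The plan is to import the formula from Lemma~\ref{lem:del^2=} and show that, after tensoring with $\cL$, every term cancels in pairs modulo $2$. So I would start by observing that the map $\partial$ descends to $CFL_{UV}^\infty(\cH,\frs)$ because its matrix coefficients only involve multiplication by monomials in the $U_{\ve{w}}, V_{\ve{z}}$, and thus commute with the quotient by the ideal $C_{\bL}$. Consequently, the formula for $\partial^2$ on $CFL_{UV,0}^\infty(\cH,\frs)$ passes to the quotient.

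Next I would analyze the contribution of a single component $K$ after imposing the relation $V_{z_{K,i}} = V_K$. Under this identification the cyclic sum
\[
U_{w_{K,1}}V_{z_{K,1}}+V_{z_{K,1}}U_{w_{K,2}}+U_{w_{K,2}}V_{z_{K,2}}+\cdots+ V_{z_{K,n_K}}U_{w_{K,1}}
\]
collapses to $V_K \cdot \bigl( U_{w_{K,1}} + U_{w_{K,2}} + U_{w_{K,2}} + U_{w_{K,3}} + \cdots + U_{w_{K,n_K}} + U_{w_{K,1}} \bigr)$. The key combinatorial point is that, since the basepoints are arranged cyclically as $w_{K,1},z_{K,1},w_{K,2},z_{K,2},\dots,w_{K,n_K},z_{K,n_K}$, each variable $U_{w_{K,i}}$ appears in exactly two adjacent pairs (with $V_{z_{K,i-1}}$ and $V_{z_{K,i}}$, indices taken cyclically). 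Because we are working over $\Z_2$, the sum vanishes.

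Summing over all components $K\in|L|$ gives $\partial^2 = 0$ on $CFL_{UV}^\infty(\cH,\frs)$, as desired. There is no real obstacle here; the only subtle point is the bookkeeping that confirms the cyclic cancellation, and this is forced by the alternating structure of $\ve{w}$- and $\ve{z}$-basepoints on each link component, which is already encoded in the statement of Lemma~\ref{lem:del^2=}.
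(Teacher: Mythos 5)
Your proposal is correct and follows essentially the same route as the paper: the paper simply invokes Lemma~\ref{lem:del^2=} and notes that $\cL$ identifies the $V_z$ variables on a component, while you spell out explicitly that under this identification each $U_{w_{K,i}}$ occurs exactly twice in the cyclic sum, so it vanishes mod $2$. You also correctly record the preliminary point that $\d$ descends to the quotient because it acts by monomials, which the paper leaves implicit.
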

\begin{proof}This follows from the formula in Lemma \ref{lem:del^2=} since the module $\cL$ simply identifies all $V_z$ variables for $z$ which lie in the same link component.
\end{proof}

\begin{rem}There are other modules that we could tensor with to make the differential square to zero. The module $\cL$ is actually a quite natural choice. As we will see in the proof of Proposition \ref{lem:differentialcomp}, terms of the form $V_{z}+V_{z'}$ appear in the differential after quasi-stabilization, and these terms must be zero for $S_{w,z}^{\pm}$ to be chain maps.
\end{rem}

The $\Z_2[U_{\ve{w}},V_{\ve{z}}]$-module $CFL_{UV}^\infty(\cH,\frs)$ has a natural $\Z^{|\ve{w}|}\oplus \Z^{|L|}$ filtration given by filtering over powers of the variables $U_{\ve{w}}$ and $V_{\ve{z}}$.

There are of course many different Heegaard diagrams $\cH$ for a given multi-based link $(L,\ve{w},\ve{z})$. As in the case of closed three manifolds, using \cite{JTNaturality}, given two diagrams $\cH$ and $\cH'$, there is a $\Z^{|\ve{w}|}\oplus \Z^{|L|}$-filtered map
\[\Phi_{\cH\to \cH'}:CFL_{UV}^\infty(\cH,\frs)\to CFL^\infty_{UV}(\cH',\frs)\] which is a filtered chain homotopy equivalence, and is an invariant up to $\Z^{|\ve{w}|}\oplus \Z^{|L|}$-filtered chain homotopy. The maps $\Phi_{\cH\to \cH'}$ are functorial in the sense that if $\cH,\cH'$ and $\cH''$ are three diagrams, then
\[\Phi_{\cH'\to \cH''}\circ \Phi_{\cH\to \cH'}\simeq \Phi_{\cH\to \cH''}.\] The strongest invariant, which we will occasionally refer to as the \textbf{coherent filtered chain homotopy type}, is the collection of all of the complexes $CFL_{UV}^\infty(\cH,\frs)$ for all admissible diagrams, $\cH$, for $(Y,L,\ve{w},\ve{z})$, as well as the maps $\Phi_{\cH\to \cH'}$. We let
\[CFL_{UV}^\infty(Y,\bL,\frs)\] denote this invariant. Note that since we are working with embedded Heegaard surfaces, the set of Heegaard diagrams for a link is a set, and not a proper class.

\subsection{The complex \texorpdfstring{$CFL^\infty(Y,\bL,\frt)$}{CFL\^{}(Y,L,t)}}

We describe a complex $CFL^\infty(Y,\bL,\frt)$ where $\frt$ is a relative $\Spin^c$ structure whose underlying (nonrelative) $\Spin^c$ structure on $Y$ is $\frs$. This is defined as the subcomplex of $CFL^\infty_{UV}(\cH,\frs)$ generated over $\Z_2$ by elements of the form $\ve{x}\cdot U_{\ve{w}}^I V_{\ve{z}}^J$, where
\begin{equation}J\cdot PD(M)=(\frt-\frs_{\ve{w},\ve{z}}(\ve{x}))+I\cdot PD[M].\label{eq:relativeSpin^ccondition}\end{equation} Here if $J=(j_1,\dots, j_\ell)$ then $J\cdot PD(M)$ denotes $j_1\cdot PD[\mu_1]+\cdots j_\ell\cdot PD[\mu_\ell]$ and $I\cdot PD[M]$ is defined similarly. 

In the case that $\bL$ is a knot with exactly two basepoints, we see that $CFL^\infty(Y,L,w,z,\frt)$ is generated by elements of the form $\ve{x}\cdot U_w^i V_z^j$ with
\[j\cdot PD[\mu]=(\frt-\frs_{\ve{w},\ve{z}}(\ve{x}))+i\cdot PD[\mu],\] which is exactly the complex $CFK^\infty(\cH,\frt)$ found in \cite{OSKnots}. More often one writes $[\ve{x},i,j]$ for what we write $\ve{x}\cdot U_w^{-i} V_z^{-j}$. Most authors also write $U$ for the action defined by $U\cdot [\ve{x},i,j]=[\ve{x},i-1,j-1]$, though in our notation this action corresponds to multiplication by $\bar{U}_w=U_w V_{z}$.

 Given a relative $\Spin^c$ structure $\frt\in \Spin^c(Y,L)$, we write
 \[\iota_{\frt}:CFL^\infty(Y,L,\frt)\hookrightarrow CFL_{UV}^\infty(Y,L,\frs)\] for inclusion. 

 As a direct sum of $\Z_2$-modules, we have 
\[CFL_{UV}^\infty(\cH,\frs)= \bigoplus_{\frt\in \Spin^c(Y,L)_\frs} CFL^\infty(\cH,\frt).\] Write $\pi_{\frt}:CFL_{UV}^\infty(\cH,\frs)\to CFL^\infty(\cH,\frt)$ for the projection onto $CFL^\infty(\cH,\frt)$.

The object $CFL^\infty(Y,\bL,\frt)$ defined above is also isomorphic as a filtered chain complex to the object $CFL^\infty(Y,\bL,\frs)$ defined in \cite{OSMulti}. Given any $\frt\in \Spin^c(Y,L)$ there is a map 
 
 \[\phi_{\frt}:CFL^\infty(Y,\bL,\frs)\to CFL^\infty(Y,\bL,\frt)\] defined by
 \[\phi_{\frt}(U_{\ve{w}}^I\cdot \ve{x})=   U_{\ve{w}}^I V_{\ve{z}}^J\cdot\ve{x}\] where $J$ is the multi-index satisfying Equation \eqref{eq:relativeSpin^ccondition}.  The map $\phi_{\frt}$ is a $\Z_2[\bar{U}_{\ve{w}}]$-module isomorphism, where $\bar{U}_{w}$ acts as $U_w$ on $CFL^\infty(Y, \bL,\frs)$ and as $U_wV_{z}$ on $CFL^\infty(Y,\bL,\frt)$ (for any $z$ on the same link component as $w$). Given a choice of $\frt\in \Spin^c(Y,L)_\frs$, the $\Z^{|L|}$-affine space $\Spin^c(Y,L)_\frs$ can be identified with $\Z^{|L|}$. Such an identification gives a $\Z^{|L|}$ filtration on $CFL^\infty(Y,\bL,\frs)$, the object described in \cite{OSMulti}. This filtration is exactly the pullback under $\iota_{\frt}$ of the filtration on $CFL_{UV}^\infty$ by the $V_{\ve{z}}$ variables.

\subsection{The category of \texorpdfstring{$\frP$}{P}-filtered chain complexes}

We now define the category of $\frP$-filtered chain complexes.

\begin{define}A $\frP$-filtered chain complex is a chain complex with a filtration of $\Z^{\frP}$, i.e. if $C$ is a chain complex, then a $\frP$-filtration is a collection of submodules $\cF_I\subset C$ ranging over $I\in \Z^{\frP}$ such that if $I\le I'$, then $\cF_{I'}\subset \cF_{I}$. A $\frP$-filtered homomorphism is a homomorphism $\phi:C\to C'$ where $C$ and $C'$ are $\frP$-filtered with filtrations $\cF_I$ and $\cF'_{I}$ such that 
\[\phi(\cF_I)\subset \cF'_{I}.\]
\end{define}

Working with $\frP$-filtered chain complexes is easier than working with $\Spin^c(Y,L)$-filtered chain complexes for two reasons. Firstly it is hard to compute relative $\Spin^c$ structures. Secondly, when defining maps between filtered complexes, it is important that they have compatible filtrations, so that the notion of a filtered map can be defined. If we work with a filtration over just the $U_{\ve{w}}$ or $V_{\ve{z}}$ variables, it is challenging to describe a filtered map between complexes associated to different links or different collections of basepoints. Working with a fixed set $\frP$ allows us to do this.

\subsection{Colorings and \texorpdfstring{$\frP$}{P}-filtrations} As was the case in \cite{HFplusTQFT}, to define functorial maps, it is important to work in a fixed category. A coloring is a pair $(\sigma, \frP)$ where $\sigma:\ve{w}\cup \ve{z}\to \frP$ is a map which sends all of the $\ve{z}$ basepoints on a given link component to the same color (this condition ensures that the differential squares to zero).

Let $\cC_{\sigma,\frP}$ denote the module $\Z_2[U_{\ve{w}},U_{\ve{w}}^{-1},V_{\ve{z}},V_{\ve{z}}^{-1},U_{\frP},U_{\frP}^{-1}]/I_{\sigma, \frP}$ where $I_{\sigma,\frP}$ is the submodule generated by elements of the form $U_w-U_{\sigma(w)}$ and $V_z-U_{\sigma(z)}$. The colored complex is then defined as
\[CFL_{UV}^\infty(\cH, \sigma, \frP,\frs)=CFL_{UV,0}^\infty(\cH,\sigma)\otimes_{\Z_2[U_{\ve{w}},U_{\ve{w}}^{-1},V_{\ve{z}},V_{\ve{z}}^{-1}]} \cC_{\sigma, \frP}.\]

\begin{define}Given a coloring $(\sigma, \frP)$ of $\ve{w}\cup \ve{z}$ for the link $\bL=(L,\ve{w},\ve{z})$ in $Y$, the $\frP$-\textbf{filtration} to be the filtration on $CFL^\infty_{UV}(Y,\bL,\sigma, \frP,\frs)$ induced by powers of the variables $U_{\frP}$. An element of $CFL^\infty_{UV}(Y,\bL,\sigma, \frP,\frs)$ is uniquely written as a sum of elements of the form $\ve{x}\cdot U_{\frP}^I$, and given an $I\in \Z^\frP$ we define $\cF_I$ to be the $\Z_2[U_\frP]$-submodule generated by $\ve{x}\cdot U_{\frP}^J$ with $J\ge I$
\end{define}

\begin{rem}Asking that a map $F:CFL^\infty_{UV}(\cH,\sigma, \frP,\frs)\to CFL^\infty_{UV}(\cH',\sigma',\frP,\frs')$ be $\frP$-filtered is just asking that $F$ can be written as
\[F(\ve{x})=\sum_{I\ge 0}U_{\ve{\frP}}^I\cdot H_I(\ve{x}) \] where the maps $H_I$ do not involve the $U_{\ve{\frP}}$ variables. Most maps which appear in Heegaard Floer homology are $\frP$-filtered. The differential, the triangle maps, the quadrilateral maps, as well as the maps $\Phi_w,$ $\Phi_z$ and $S_{w,z}^{\pm}$ are all $\frP$-filtered.
\end{rem}

Given an arbitrary coloring $(\sigma, \frP)$ of basepoints $\ve{w}\cup \ve{z}$, we may not always be able to define submodules corresponding to relative $\Spin^c$ structures $\frt$. However if no two basepoints from distinct link components are given the same color, then one can use a modification of Equation \eqref{eq:relativeSpin^ccondition} to define a $\frP$-filtered $\Z_2$-submodule $CFL^\infty(Y,\bL,\sigma, \frP,\frt)$. For our purposes, we just observe that in the case that $(\sigma, \frP)$ is the trivial coloring (i.e. $\frP=\ve{w}\cup |L|$ and $\sigma$ is the map sending $w\in \ve{w}$ to $w$ and $z\in \ve{z}$ to the link component containing it), then $CFL_{UV}^\infty(\cH,\sigma,\frP,\frs)$ is equal to just $CFL_{UV}^\infty(\cH,\frs)$ and the maps $\pi_{\frt}$ and $\iota_{\frt}$ are still defined. The following lemma is essentially trivial, though it is useful for relating endomorphisms of $CFL_{UV}^\infty(Y,\bL,\frs)$ to endomorphisms of the subcomplexes $CFL^\infty(Y,\bL,\frt)$:

 \begin{lem}\label{lem:chainhomotopyonsubcomplex}Suppose that $(L,\ve{w},\ve{z})$ is a link in $Y$ and that $(\sigma,\frP)$ is the trivial coloring. Suppose $f$ and $g$ are $\frP$-filtered $\Z_2[U_{\frP}]$-module endomorphisms of $CFL_{UV}^\infty(Y,\bL,\sigma, \frP,\frs)$, such that $f$ and $g$ are chain homotopic via a chain homotopy which is $\frP$-filtered on $CFL_{UV}^\infty(Y,\bL,\sigma, \frP,\frs)$. Then $\pi_{\frt}\circ f\circ \iota_{\frt}$ and $\pi_{\frt}\circ g\circ \iota_{\frt}$ are $\Z^{|\ve{w}|}\oplus \Z^{|L|}$-filtered $\Z_2[\bar{U}_{\ve{w}}]$-chain homotopic.
 \end{lem}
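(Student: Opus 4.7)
The plan is to promote the given chain homotopy to one on each $\Spin^c(Y,L)_\frs$-summand by conjugating with $\iota_\frt$ and $\pi_\frt$. Concretely, let $H$ be the $\frP$-filtered (implicitly $\Z_2[U_\frP]$-equivariant) chain homotopy with $\d H + H \d = f + g$, and set
\[H_\frt := \pi_\frt \circ H \circ \iota_\frt.\]
I claim $H_\frt$ is the required $\Z^{|\ve{w}|}\oplus \Z^{|L|}$-filtered, $\Z_2[\bar{U}_{\ve{w}}]$-equivariant chain homotopy between $\pi_\frt \circ f\circ \iota_\frt$ and $\pi_\frt \circ g\circ \iota_\frt$.

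The essential observation is that under the trivial coloring the differential $\d$ respects the splitting
\[CFL_{UV}^\infty(\cH,\frs) = \bigoplus_{\frt \in \Spin^c(Y,L)_\frs} CFL^\infty(\cH,\frt).\]
Indeed, a Whitney disk $\phi \in \pi_2(\ve{x},\ve{y})$ contributes $U_{\ve{w}}^{n_{\ve{w}}(\phi)} V_{\ve{z}}^{n_{\ve{z}}(\phi)}\ve{y}$ to $\d \ve{x}$, and the shift in relative $\Spin^c$ structure from $\frs_{\ve{w},\ve{z}}(\ve{x})$ to $\frs_{\ve{w},\ve{z}}(\ve{y})$ is exactly compensated by these powers of $U_{\ve{w}}$ and $V_{\ve{z}}$ via Equation \eqref{eq:relativeSpin^ccondition}, so the output lies in the same $\frt$-summand as the input. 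Consequently $\d\circ \iota_\frt = \iota_\frt \circ \d$ and $\pi_\frt \circ \d = \d \circ \pi_\frt$, and using $\d H + H \d = f + g$ one obtains
\[\d H_\frt + H_\frt \d = \pi_\frt\circ(\d H + H \d)\circ \iota_\frt = \pi_\frt\circ f \circ \iota_\frt + \pi_\frt\circ g\circ \iota_\frt,\]
as required.

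It remains to verify the module-theoretic properties. Under the trivial coloring $\frP = \ve{w}\cup |L|$, the $\frP$-filtration coincides with the $\Z^{|\ve{w}|}\oplus \Z^{|L|}$-filtration by powers of the $U_{\ve{w}}$ and $V_{\ve{z}}$ variables, so filteredness of $H_\frt$ follows immediately from filteredness of $H$ together with the fact that on each monomial generator $\iota_\frt$ and $\pi_\frt$ act as either the identity or zero. For equivariance, the action of $\bar{U}_w = U_w V_z$ on $CFL^\infty(Y,\bL,\frt)$ is the restriction of multiplication by $U_w V_z$ on $CFL_{UV}^\infty(Y,\bL,\frs)$; since $H$ commutes with each $U_w$ and $V_z$, and since $\bar{U}_w$ preserves every $\frt$-summand, $H_\frt$ is $\Z_2[\bar{U}_{\ve{w}}]$-equivariant. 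No genuine obstacle arises, as the lemma is essentially the observation that the $\Spin^c(Y,L)$-decomposition is compatible with every relevant piece of structure.
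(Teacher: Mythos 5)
Your proof is correct and follows essentially the same route as the paper: conjugate the $\frP$-filtered homotopy $H$ by $\iota_\frt$ and $\pi_\frt$, using that these are $\frP$-filtered, $\Z_2[\bar{U}_{\ve{w}}]$-equivariant chain maps and that the filtration on $CFL^\infty(Y,\bL,\frt)$ is the pullback of the $\frP$-filtration. The only difference is that you spell out why the differential respects the $\Spin^c(Y,L)_\frs$-splitting, which the paper takes as already established.
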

 
 \begin{proof} First note that the filtration on $CFL^\infty(\cH,\frt)$ is just the pullback under $\iota_{\frt}$ of the $\frP$-filtration on $CFL_{UV}^\infty(\cH,\sigma,\frP,\frs)$. If $f$ and $g$ are $\frP$-filtered chain homotopic, we have that
 \[f-g=\d H+H\d\] for a $\frP$ filtered map $H$. Pre- and post-composing with the $\frP$-filtered maps $\iota_{\frt}$ and $\pi_{\frt}$ yields a $\Z^{|\ve{w}|}\oplus \Z^{|L|}$-filtered chain homotopy between  $\pi_{\frt}\circ f\circ \iota_{\frt}$ and $\pi_{\frt}\circ g\circ \iota_{\frt}$ because the maps $\pi_{\frt}$ and $\iota_{\frt}$ are $\frP$-filtered chain maps. The chain homotopy is $\Z_2[\bar{U}_{\ve{w}}]$-equivariant since the maps $\iota_{\frt}$ and $\pi_{\frt}$ are $\Z_2[\bar{U}_{\ve{w}}]$-equivariant, as we mentioned above.
 \end{proof}

\subsection{Why we use the larger \texorpdfstring{$CFL_{UV}^\infty(Y,L,\frs)$}{CFL\_UV(Y,L,s)} instead of \texorpdfstring{$CFL^\infty(Y,L,\frs)$}{CFL\^{}infty(Y,L,s)}}

We briefly explain why we use the object $CFL_{UV}^\infty(Y,\bL,\frs)$ to prove formulas for basepoint moving maps, instead of just the $\Spin^c(Y,L)_\frs$-filtered object $CFL^\infty(Y,\bL,\frs)$ from \cite{OSMulti}. This is simply because the maps $\Phi_w$ and $\Psi_z$ are not $\Spin^c(Y,L)_\frs$-filtered endomorphisms of $CFL^\infty(Y,L,\frs)$. Instead it's more convenient to think of them as maps between different $CFL^\infty(Y,L,\frt_i)$, and hence the most convenient operation is then to just take the direct sum over all $\frt_i$ by adding in the formal $V_{\ve{z}}$ variables. The map $\Phi_w$ on $CFL^\infty(Y,L,\frs)$ (the version of link Floer homology defined in \cite{OSMulti}, which we won't use) could be defined as \[\Phi_w(\ve{x})=U_w^{-1} \sum_{\ve{y}\in \bT_\alpha\cap \bT_\beta} \sum_{\substack{\phi\in \pi_2(\ve{x},\ve{y})\\ \mu(\phi)=1}} n_w(\phi)\# \hat{\cM}(\phi)U_{\ve{w}}^{n_{\ve{w}}(\phi)}\cdot \ve{y},\] though this may not be a filtered map because $U_w^{-1}$ is not a filtered map. In fact, it is easy to find examples where it is not filtered. By including the $V_{\ve{z}}$ variables, in the next section we consider the map
\[\Phi_w(\ve{x})=U_w^{-1} \sum_{\ve{y}\in \bT_\alpha\cap \bT_\beta} \sum_{\substack{\phi\in \pi_2(\ve{x},\ve{y})\\ \mu(\phi)=1}} n_w(\phi)\# \hat{\cM}(\phi)U_{\ve{w}}^{n_{\ve{w}}(\phi)}V_{\ve{z}}^{n_{\ve{z}}(\phi)}\cdot \ve{y},\] on $CFL_{UV}^\infty(Y,\bL,\frs)$, which is $\frP$-filtered, and is invariant up to $\frP$-filtered $\Z_2[U_\frP]$-chain homotopy (Lemma \ref{lem:PhiPsiinvaraints} below).

The trouble with using a map which is not filtered in the category of filtered chain complexes can be described as follows. The composition of filtered maps which are invariant up to filtered chain homotopy is itself invariant up to filtered chain homotopy. We could conceivably consider a map which is not filtered but is invariant up to filtered chain homotopy, though a composition involving an unfiltered map cannot be defined up to filtered chain homotopy, even if all of the maps involved are defined up to filtered chain homotopy. By working in $CFL_{UV}^\infty(Y,\bL,\frs)$, all of the maps we'll work with are $\frP$-filtered and defined up to filtered chain homotopy.

Additionally the map $\Psi_z$ has a convenient expression involving differentiating with respect to $V_z$, so it is somewhat easier to work over $CFL_{UV}^\infty(Y,\bL,\frs)$ than over the various $CFL^\infty(Y,\bL,\frt_i)$.

\section{The maps \texorpdfstring{$\Phi_w$}{Phi\_w} and \texorpdfstring{$\Psi_z$}{Psi\_z}}
\label{sec:PhiPsi}
Here we now define maps $\Phi_w$ and $\Psi_z$, which are endomorphisms of $CFL_{UV}(Y,\bL,\sigma,\frP,\frs)$. These are denoted by $\Phi_{i,j}$ and $\Psi_{i,j}$ in \cite{Smovingbasepoints}. We define $\Phi_w:CFL_{UV}^\infty(\cH,\sigma,\frP,\frs)\to CFL_{UV}^\infty(\cH,\sigma,\frP,\frs)$ by the formula
\[\Phi_w(\ve{x})=U_w^{-1} \sum_{\ve{y}\in \bT_\alpha\cap \bT_\beta} \sum_{\substack{\phi\in \pi_2(\ve{x},\ve{y})\\ \mu(\phi)=1}} n_w(\phi)\# \hat{\cM}(\phi)U_{\ve{w}}^{n_{\ve{w}}(\phi)}V_{\ve{z}}^{n_{\ve{z}}(\phi)}\cdot \ve{y},\] which we can alternatively think of as $(\frac{d}{d U_w} \d)$. Similarly we define
\[\Psi_z(\ve{x})=V_z^{-1} \sum_{\ve{y}\in \bT_\alpha\cap \bT_\beta} \sum_{\substack{\phi\in \pi_2(\ve{x},\ve{y})\\ \mu(\phi)=1}} n_z(\phi)\# \hat{\cM}(\phi)U_{\ve{w}}^{n_{\ve{w}}(\phi)}V_{\ve{z}}^{n_{\ve{z}}(\phi)}\cdot \ve{y},\] which we can alternatively write as  $(\frac{d}{d V_z} \d)$, where the derivative is taken on $CFL_{UV,0}^\infty$ (i.e. before tensoring with the module $\cL$ and thus setting all of the $V_z$ on a link component equal to each other).

We have the following (compare \cite{Smovingbasepoints}*{Lem. 4.1}):
\begin{lem}\label{lem:PhiPsicommutewithchangeofdiagrams}On $CFL_{UV}^\infty(\cH,\frs)$, we have $\Phi_w\d+\d \Phi_w=0$. Also $\Psi_z\d+\d \Psi_z=U_{w}+U_{w'}$, where $w$ and $w'$ are the $\ve{w}$ basepoints adjacent to $z$.
\end{lem}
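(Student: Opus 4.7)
The plan is to view everything at the level of the uncolored module $CFL_{UV,0}^\infty(\cH,\frs)$, where the maps $\d$, $\Phi_w$, and $\Psi_z$ are all described by matrices whose entries are polynomials in the formal variables $U_{\ve{w}}$ and $V_{\ve{z}}$ (before any $V_z$ are identified by passing to $\cL$). In that setting the defining formulas can be read as
\[
\Phi_w=\tfrac{d}{dU_w}\d\qquad\text{and}\qquad \Psi_z=\tfrac{d}{dV_z}\d,
\]
since $\tfrac{d}{dU_w}U_w^n=nU_w^{n-1}=n U_w^{-1}U_w^{n}$, and analogously for $V_z$. Crucially, the coefficient ring $\Z_2[U_{\ve{w}},U_{\ve{w}}^{-1},V_{\ve{z}},V_{\ve{z}}^{-1}]$ is commutative, so the formal partial derivatives $\tfrac{d}{dU_w}$ and $\tfrac{d}{dV_z}$ are genuine $\Z_2$-linear derivations on the matrix algebra of endomorphisms.

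The first step is to apply the Leibniz rule to the matrix-valued square $\d^2$ (still working over $CFL_{UV,0}^\infty$):
\[
\tfrac{d}{dU_w}(\d^2)=(\tfrac{d}{dU_w}\d)\,\d+\d\,(\tfrac{d}{dU_w}\d)=\Phi_w\d+\d\Phi_w,
\]
and similarly $\tfrac{d}{dV_z}(\d^2)=\Psi_z\d+\d\Psi_z$ (there are no signs in characteristic two). The right-hand sides are precisely the (anti)commutators we must compute, and the left-hand sides are determined by Lemma~\ref{lem:del^2=}.

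The second step is to differentiate the explicit formula
\[
\d^2=\sum_{K\in |L|}\bigl(U_{w_{K,1}}V_{z_{K,1}}+V_{z_{K,1}}U_{w_{K,2}}+\cdots+V_{z_{K,n_K}}U_{w_{K,1}}\bigr)
\]
term by term. If $w=w_{K,j}$, only the two adjacent summands contain $U_w$ and together they contribute $V_{z_{K,j-1}}+V_{z_{K,j}}$; this is $V_{z'}+V_z$ where $z,z'$ are the $\ve{z}$-basepoints flanking $w$. Symmetrically, if $z=z_{K,j}$, differentiating in $V_z$ yields $U_{w_{K,j}}+U_{w_{K,j+1}}$, i.e. the sum of the $U$-variables adjacent to $z$.

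The last step is to pass back to $CFL_{UV}^\infty(\cH,\frs)=CFL_{UV,0}^\infty(\cH,\frs)\otimes_{\Z_2[U_{\ve{w}}^{\pm 1},V_{\ve{z}}^{\pm 1}]}\cL$. In $\cL$ the two basepoints $z,z'$ flanking a given $w$ lie on the same component $K$, so by definition of the ideal $C_\bL$ we have $V_z=V_{z'}$ and hence $V_z+V_{z'}=0$; this gives $\Phi_w\d+\d\Phi_w=0$. For $\Psi_z$, no identification of $U$-variables occurs in $\cL$, so we obtain $\Psi_z\d+\d\Psi_z=U_w+U_{w'}$ exactly as asserted. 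There is really no serious obstacle; the only thing to be careful about is keeping the two $V$-variables adjacent to $w$ distinct until after differentiation, which is the reason for carrying out the Leibniz computation before tensoring down to $\cL$.
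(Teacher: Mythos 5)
Your argument is correct and is precisely the paper's approach: differentiate $\d^2$ (using the formula from Lemma~\ref{lem:del^2=}) with respect to $U_w$ or $V_z$ via the Leibniz rule on the uncolored complex $CFL_{UV,0}^\infty$, then pass to $\cL$. You have simply spelled out the computation the paper leaves implicit.
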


\begin{proof} One takes the $V_z$ or $U_w$ derivative of $\d\circ\d$, before one tensors $CFL_{UV,0}^\infty$ with $\cL$. The map $\d^2$ is computed in Lemma \ref{lem:del^2=}. Then one tensors with $\cL$ to get the equality described above.
\end{proof}

In addition, we have the following (compare \cite{Smovingbasepoints}*{Theorem 4.2.}):
\begin{lem}\label{lem:PhiPsiinvaraints}The maps $\Phi_w$ and $\Psi_z$ commute with change of diagram maps $\Phi_{\cH_1\to \cH_2}$ up to $\frP$-filtered, $\Z_2[U_\frP]$-chain homotopy.
\end{lem}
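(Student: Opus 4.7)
The plan is to reduce to checking commutation for each elementary change-of-diagram move and to handle each by a formal-derivative trick. By the Juh\'{a}sz--Thurston naturality framework \cite{JTNaturality}, every invariance map $\Phi_{\cH_1\to\cH_2}$ is $\frP$-filtered chain-homotopic to a composition of maps associated to (i) handleslides of the $\alpha$- or $\beta$-curves, (ii) isotopies of attaching curves rel $\ve{w}\cup\ve{z}$, (iii) index-$1/2$ (de)stabilizations, and (iv) changes of almost complex structure. Since the composition of $\frP$-filtered chain-homotopy commutations is again one, it suffices to verify the statement for each elementary move. For an index-$1/2$ stabilization the new generator has support disjoint from $\ve{w}\cup\ve{z}$, so $\Phi_w$ and $\Psi_z$ commute with it strictly; isotopies and changes of $J$ produce continuation maps of the same triangle type as (i) and are handled identically.

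The core case is the triangle map $f := f_{\alpha\beta\gamma}(\,\cdot\, \otimes \Theta_{\beta\gamma}^+)$ associated to a handleslide, defined initially on the uncolored complex $CFL^\infty_{UV,0}$. Counting ends of one-dimensional moduli spaces of Maslov-index-one triangles and applying the boundary-degeneration structure theorem \cite{OSMulti}*{Thm.~5.5} yields, on $CFL^\infty_{UV,0}$, a relation
\[
\d f + f \d \;=\; E,
\]
where $E$ is a combination of $\alpha\beta$-, $\beta\gamma$-, and $\alpha\gamma$-boundary degenerations, each encircling a complete link component and contributing an alternating sum of $UV$-monomials exactly as in Lemma \ref{lem:del^2=}; tensoring with $\cL$ kills $E$, recovering that $f$ is a chain map on $CFL^\infty_{UV}$. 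Applying $\partial/\partial U_w$ to the above relation on $CFL^\infty_{UV,0}$, and using that $\partial_{U_w}\d = \Phi_w$ by direct comparison of the defining formulas, the product rule gives
\[
\Phi_w f + f\Phi_w \;=\; \d(\partial_{U_w} f) + (\partial_{U_w} f)\d + \partial_{U_w} E.
\]
The combinatorial argument from Lemma \ref{lem:del^2=} applies to $\partial_{U_w} E$ unchanged: it is a telescoping sum of adjacent $V_z$-monomials that vanishes in $\cL$. Hence $\partial_{U_w} f$, which is manifestly $\frP$-filtered and $\Z_2[U_\frP]$-linear, is the required chain homotopy between $\Phi_w f$ and $f \Phi_w$. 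The analogous calculation with $\partial_{V_z}$ gives the chain homotopy for $\Psi_z$, with the one new feature being that the contribution of $\partial_{V_z} E$ in $\cL$ is an even combination of $U_w + U_{w'}$ factors which cancel modulo $2$, in perfect parallel with how the defect $\d\Psi_z + \Psi_z\d = U_w+U_{w'}$ of Lemma \ref{lem:PhiPsicommutewithchangeofdiagrams} arose from the same bookkeeping on $\d^2$.

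The main obstacle I anticipate is the precise bookkeeping of the triple-diagram boundary-degeneration term $E$ and the verification that $\partial_{U_w} E$ and $\partial_{V_z} E$ vanish in $\cL$ (or become an honest $\d$-boundary). This is a direct triple-diagram extension of Lemma \ref{lem:del^2=}, relying only on the fact from \cite{OSMulti}*{Thm.~5.5} that every boundary degeneration on a triple Heegaard diagram encircles a complete link component and so produces the same $UV$-monomial structure. Once that bookkeeping is in hand, the differentiation trick combined with the reduction to elementary moves gives the lemma for both $\Phi_w$ and $\Psi_z$.
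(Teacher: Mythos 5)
Your overall strategy is the same as the paper's: reduce to elementary moves, note that the tautological and change-of-$J$ maps cause no trouble, and obtain the homotopy for the triangle maps by formally differentiating a chain-map relation on the uncolored complex $CFL^\infty_{UV,0}$, with $\partial_{U_w}f$ (resp.\ $\partial_{V_z}f$) as the $\frP$-filtered, $\Z_2[U_\frP]$-equivariant homotopy. But there is a genuine gap at the crucial step. You posit a relation $\d f+f\d=E$ on the uncolored complex, with $E$ a boundary-degeneration term that only dies after tensoring with $\cL$, and then you claim $\partial_{U_w}E$ and $\partial_{V_z}E$ also vanish in $\cL$. Differentiation does not commute with ``vanishing in $\cL$'': if $E=(V_z+V_{z'})\cdot m$ then $E$ is zero in $\cL$ while $\partial_{V_z}E$ equals $m$ there. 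This is precisely the mechanism behind Lemma \ref{lem:PhiPsicommutewithchangeofdiagrams}, where $\d^2$ vanishes in $\cL$ but its $V_z$-derivative produces the anomaly $U_w+U_{w'}\neq 0$. So if the triangle maps really had an error term $E$ of the shape you describe, your own differentiation argument would generically produce a nonzero defect for $\Psi_z$, and your assertion that the resulting $U_w+U_{w'}$ contributions ``cancel modulo $2$'' is unsubstantiated — nothing in your setup forces such a cancellation.

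The correct route, and the one the paper takes, is to show that the error term is identically zero \emph{before} coloring: the triangle map $\ve{x}\mapsto F_{\ve{\alpha}'\ve{\alpha}\ve{\beta}}(\Theta\otimes\ve{x})$ is a chain map on $CFL^\infty_{UV,0}$ because (i) $F_{\ve{\alpha}'\ve{\alpha}\ve{\beta}}$ is a chain map of uncolored complexes by Gromov compactness (a Maslov index $2$ boundary degeneration cannot split off from an index $1$ triangle moduli space, so the only anomaly could come from $F(\d\Theta\otimes -)$, not from the degeneration bookkeeping you describe), and (ii) $\d\Theta=0$ already in the uncolored complex. Point (ii) is the content you are missing: the diagram $(\Sigma,\ve{\alpha}',\ve{\alpha},\ve{w},\ve{z})$ represents an unlink with exactly two basepoints per component, so by Lemma \ref{lem:del^2=} its uncolored differential squares to zero, its top-degree homology is $\Z_2[V_{\ve{z}}]$, and the distinguished generator $\Theta$ is a cycle without any tensoring. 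With $E=0$ on the nose, differentiating $\Phi_{\cH_1\to\cH_2}\d+\d\Phi_{\cH_1\to\cH_2}=0$ gives the homotopy cleanly, and the filtration/equivariance checks you sketch then go through as you state.
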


\begin{proof}Consider the complex $CFL_{UV,0}^\infty$ (i.e. the complex before we set all of the $V_{z}$ variables on each link component equal to each other). The differential doesn't square to zero, though we can still consider the maps $\Phi_{\cH_1\to \cH_2}$. These can be written as a composition of change of almost complex structure maps, triangle maps (corresponding to $\ve{\alpha}$- or $\ve{\beta}$- isotopies or handleslides), (1,2)-stabilization maps, as well as maps corresponding to isotoping the Heegaard surface inside of $Y$ via an isotopy which fixes $\bL$. We claim that the maps $\Phi_{\cH_1\to \cH_2}$  satisfy
\[\Phi_{\cH_1\to \cH_2}\d+\d \Phi_{\cH_1\to \cH_2}=0,\] even before tensoring with $\cL$. The maps $\Phi_{\cH_1\to \cH_2}$  are defined as a composition of maps which count holomorphic triangles (handleslides or isotopy maps), holomorphic disks with dynamic almost complex structure (change of almost complex structure maps) or maps which are defined via tautologies (stabilization and diffeomorphism). The maps which are defined by tautologies obviously satisfy $\d \phi+\phi\d=0$ before tensoring with $\cL$. The maps induced by counting disks with dynamic almost complex structure also satisfy $\d\phi+\phi\d=0$ before tensoring with $\d$, since that follows from a Gromov compactness argument. Maps induced by handleslides or isotopies of the $\ve{\alpha}$-curves take the form
\[\ve{x}\mapsto F_{\ve{\alpha}'\ve{\alpha}\ve{\beta}}(\Theta\otimes \ve{x})\] where $\Theta$ is the top degree generator of a complex $CFL_{UV}^-(\Sigma,\ve{\alpha}',\ve{\alpha},\ve{w},\ve{z})$ and $F_{\ve{\alpha}'\ve{\alpha}\ve{\beta}}$ is the map which counts holomorphic triangles. For this to be a chain map before tensoring with $\cL$, it is sufficient for $\d \Theta=0$ before tensoring with $\cL$, since the triangle map 
\[F_{\ve{\alpha}'\ve{\alpha}\ve{\beta}}:CFL_{UV,0}^-(\Sigma,\ve{\alpha}',\ve{\alpha},\ve{w},\ve{z})\otimes CFL_{UV,0}^{\infty}(\Sigma, \ve{\alpha},\ve{\beta},\ve{w},\ve{z})\to CFL_{UV,0}^\infty(\Sigma, \ve{\alpha}',\ve{\beta},\ve{w},\ve{z})\] is a chain map by a Gromov compactness argument. We note now that the diagram $(\Sigma,\ve{\alpha}',\ve{\alpha},\ve{w},\ve{z})$ represents an unlink embedded in $(S^1\times S^2)^{\#n}$ for some $n$, and this unlink has exactly two basepoints per link component. By the differential computation in Lemma \ref{lem:del^2=}, the complex
\[CFL_{UV,0}^-(\Sigma,\ve{\alpha}',\ve{\alpha},\ve{w},\ve{z})\] is a chain complex before tensoring with anything, and in particular the homology group $HFL_{UV,0}^-(\Sigma,\ve{\alpha}',\ve{\alpha},\ve{w},\ve{z})$, is well defined even before tensoring with anything. An easy computation shows that if $HFL_{UV,0,\max}^-$ denotes the subset of maximal homological grading, then we have an isomorphism
\[HFL_{UV,0,\max}^-(\Sigma,\ve{\alpha}',\ve{\alpha},\ve{w},\ve{z})\iso \Z_2[V_{\ve{z}}],\] and in particular $HFL_{UV,0}^-(\Sigma,\ve{\alpha}',\ve{\alpha},\ve{w},\ve{z})$ admits a ``generator'' $\Theta$ which is distinguished by the relative Maslov grading and the algebraic property of generating the maximally graded subset as a $\Z_2[V_{\ve{z}}]$ module. In particular $\d \Theta=0$ even before tensoring with $\cL$, as we needed.

 Hence 
\[\Phi_{\cH_1\to \cH_2}\d+\d \Phi_{\cH_1\to\cH_2}=0,\] even before tensoring with $\cL$. Differentiating with respect to $U_w$ yields that
\[\Phi_{\cH_1\to \cH_2}'\d+\Phi_{\cH_1\to \cH_2} \Phi_w+\Phi_w\Phi_{\cH_1\to \cH_2}+\d \Phi_{\cH_1\to \cH_2}'=0,\] immediately implying that $\Phi_{\cH_1\to \cH_2} \Phi_w+\Phi_w\Phi_{\cH_1\to \cH_2}\simeq 0$. The only point to check is that the chain homotopy $H=\Phi_{\cH_1\to \cH_2}'$ is $\frP$-filtered and $\Z_2[U_\frP]$-equivariant. The equivariance condition is trivial. The filtration condition is also easy to check, since whenever $F$  has a decomposition with only nonnegative powers of $U_w$ and $V_z$, the map $(\frac{d}{d U_w} F)$ also has a decomposition with nonnegative powers of $U_w$ and $V_z$.
\end{proof}

\begin{rem}
Using the Leibniz rule we have that 
\[\Phi_w=\frac{d}{d U_w} \circ \d+\d \circ \frac{d}{d U_w},\] as long as $U_w$ doesn't share the same color as any other basepoint. Similarly $\Psi_z\simeq 0$ if $z$ doesn't share the same color with any other basepoint, though in both cases the chain homotopy $H=\frac{d}{d U_w}$ or $H=\frac{d}{d V_z}$ is neither $\frP$-filtered nor $\Z_2[U_\frP]$-equivariant.
\end{rem}
\section{Cylindrical boundary degenerations}

Suppose that $S$ is a Riemann surface with $d$ punctures $\{p_1,\dots, p_d\}$ on its boundary. We consider holomorphic maps 
\[u:S\to \Sigma\times (-\infty,1]\times \R\] such that the following hold:
\begin{enumerate}
\item $u$ is smooth;
\item $u(\d S)\subset (\ve{\alpha}\times \{1\}\times \R);$
\item $\pi_{\bD}\circ u$ is non-constant on each component of $S$;
\item for each $i$, $u^{-1}(\alpha_i\times \{1\}\times \R)$ consists of exactly one component of $\d S\setminus \{p_1,\dots, p_d\}$;
\item the energy of $u$ is finite;
\item $u$ is an embedding;
\item if $z_i\in S$ is a sequence of points approaching a puncture $p_j$, then $(\pi_\bD\circ u)(z_i)$ approaches $\infty$ in the compactification of $(-\infty,1]\times \R$ as the unit complex disk.
\end{enumerate}

We organize such curves into moduli spaces $\cN(\phi)$ for $\phi\in \pi_2^\alpha(\ve{x})$, modding out by automorphisms of the source, as usual. There is an action of $\PSL_2(\R)$ on $\cN(\phi)$, which is just the action on the $(-\infty,1]\times \R$ coordinate of a disk $u$, and we denote the quotient space $\hat{\cN}(\phi)$. One defines cylindrical $\ve{\beta}$-boundary degenerations analogously.
We now discuss transversailty. In the original set up (singly pointed diagrams and disks mapped into $\Sym^g(\Sigma)$), a generic almost complex structure $J$ on $\Sym^g(\Sigma)$ in a neighborhood of $\Sym^g(\frj)$ achieves transversality for Maslov index 2 holomorphic $\ve{\alpha}$-degenerate disks (\cite{OSDisks}*{Prop. 3.14}). In the cylindrical setup, if a sequence of holomorphic strips for the almost complex structures considered in \cite{LCylindrical} has a cylindrical boundary degeneration in its limit, the boundary degeneration will be $\frj_\Sigma\times \frj_\bD$-holomorphic. Thus we need transversality for cylindrical boundary degenerations for split almost complex structures. For the standard proof that $\d^2=0$ and for purposes of this paper, we only need transversality for Maslov index 2 boundary degenerations. Each of these domains has multiplicity 1 in one component of $\Sigma\setminus \ve{\alpha}$, and zero everywhere else. If $u:S\to \Sigma\times (-\infty,1]\times \R$ is a component of a holomorphic curve representing an element of $\cN(\phi)$ for a $\phi\in \pi_2^\alpha(\ve{x})$ such that $\pi_\Sigma\circ u$ is nonconstant, then by easy complex analysis $u|_{C}$ is injective, where $C\subset \d S$ is the part of $S$ mapping to $ \d \cD(\phi)$.   Adapting the strategy of perturbing boundary conditions instead of almost complex structures, as in \cite{LCylindrical}*{Prop. 3.9}, \cite{OSDisks}*{Prop. 3.9} or \cite{OhBoundary}, for generic choice of $\ve{\alpha}$-curves, we can thus achieve transversality for Maslov index 2 cylindrical boundary degenerations.

An important result for our purposes is a count of Maslov index 2 boundary degenerations produced by Ozsv\'{a}th and Szab\'{o}:

\begin{thm}[\cite{OSMulti}*{Thm. 5.5}]\label{thm:boundarydegencount}Consider a surface $\Sigma$ of genus $g$ equipped with a set of attaching circles $\ve{\alpha}=\{\alpha_1,\dots, \alpha_{g+\ell-1}\}$ which span a $g$-dimensional lattice in $H_1(\Sigma;\Z)$. If $\cD(\phi)\ge 0$ and $\mu(\phi)=2$, then $\cD(\phi)=A_i$ for some $i$, and indeed
\[\# \hat{\cN}(\phi)=\begin{cases}0 & (\Mod 2) \text{ if } \ell=1\\
1 & (\Mod 2) \text{ if } \ell>1
\end{cases}.\] Here $A_i$ denotes a component of $\Sigma\setminus \ve{\alpha}$.
\end{thm}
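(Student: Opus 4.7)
The plan is to proceed in three stages: classify the positive domains with $\mu(\phi)=2$, reduce the count of holomorphic representatives to a combinatorial one via split almost complex structures, and then evaluate that combinatorial count modulo $2$.

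\textbf{Domain classification.} The Maslov index of an $\ve{\alpha}$-boundary degeneration is expressible in terms of the Euler measure of its domain by an analog of Lipshitz's formula \cite{LCylindrical}. Combined with the positivity hypothesis $\cD(\phi)\ge 0$, this forces $e(\cD(\phi))$ to be both small and nonnegative, and a case analysis on the Euler measures of the regions of $\Sigma\setminus \ve{\alpha}$ shows that the only possibility is $\cD(\phi)=A_i$ for some $i$, with multiplicity one. This is the part of the statement asserted before the ``indeed''.

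\textbf{Reduction to branched covers.} With a split almost complex structure $\frj_\Sigma\times \frj_\bD$, for which transversality has already been arranged in the preceding discussion, any $u\in \cN(A_i)$ factors as a pair $(u_\Sigma,u_\bD)$. Since the domain $A_i$ has multiplicity one, $u_\Sigma$ is a biholomorphism from the source $S$ onto $A_i$, so the only remaining freedom is in the disk component $u_\bD$, which must be a branched cover of the compactified disk $(-\infty,1]\times\R$ whose boundary arcs map to $\d A_i$ consistently with the intersection pattern at $\ve{x}$. Quotienting by the $\PSL_2(\R)$-action, the count $\#\hat{\cN}(A_i)$ reduces to a finite combinatorial count of such branched covers up to reparametrization.

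\textbf{Parity evaluation.} Finally, I would extract the mod-$2$ parity by exhibiting structure on the resulting combinatorial moduli space. When $\ell=1$, the unique region $A_1$ has every $\ve{\alpha}$-curve appearing twice on $\d A_1$, once from each side; this two-sided symmetry yields a free $\Z_2$-action on the moduli space that pairs configurations and forces an even count, as in the argument of \cite{OSDisks}. When $\ell>1$, having strictly more $\ve{\alpha}$-curves than are needed to span the $g$-dimensional lattice means that at least one $\ve{\alpha}$-curve fails to bound some $A_i$ symmetrically on both sides, which breaks the involution; a direct enumeration of the remaining branched covers then yields an odd count. The main obstacle will be this final combinatorial verification in the $\ell>1$ case, which requires careful bookkeeping of the arcs appearing on each $\d A_i$ and is the technical heart of the argument.
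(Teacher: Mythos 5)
This statement is not proved in the paper at all --- it is quoted verbatim as \cite{OSMulti}*{Thm.~5.5}, and the present paper simply cites it. So there is no in-paper argument to compare against; what you have written is an attempt to reprove an external theorem.

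As a proof, your proposal has the right overall shape but does not actually establish the result, and the gap is precisely where you yourself flag it. Stages~1 and~2 (domain classification via a Maslov-index formula, reduction to split complex structures and factoring a curve over $A_i$ into a biholomorphism onto $A_i$ paired with a holomorphic map to the half-plane) are reasonable, though you should be careful about the planarity of $A_i$ and about what the precise index formula for $\ve{\alpha}$-degenerations is --- it is not literally the Lipshitz disk formula but a variant adapted to these punctured domains. The real problem is Stage~3. For $\ell=1$ you assert a ``free $\Z_2$-action'' coming from each $\alpha_i$ appearing on both sides of $\d A_1$, but you never say what the involution actually is on a given holomorphic map, nor why it is fixed-point free; as written this is a hope, not an argument, and I do not believe the cited source [OSDisks] proves the even count by exhibiting such an involution. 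For $\ell>1$ you simply say ``a direct enumeration of the remaining branched covers then yields an odd count'' and immediately identify this as the ``technical heart'' and an ``obstacle.'' That is the entire content of the theorem in that case, so the proposal as submitted is a plan, not a proof. To make this work you would either have to carry out the branched-cover enumeration explicitly (including controlling branch points and proving a gluing/transversality statement for the split structure), or --- more in line with Ozsv\'{a}th--Szab\'{o}'s actual strategy --- reduce to a model computation via Gromov compactness and a degree argument rather than by hand-counting covers.
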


\section{Preliminaries on the quasi-stabilization operation}
\label{sec:prelimquasistab}
 Suppose that $\bL=(L,\ve{w},\ve{z})$ is an oriented link in $Y$ and that $w$ and $z$ are two points, both in a single component of $L\setminus (\ve{w}\cup \ve{z})$ such that $(L, \ve{w}\cup \{w\},\ve{z}\cup \{z\})$ has basepoints which alternate between $\ve{w}$ and $\ve{z}$ as one traverses the link. We assume that according to the orientation of $L$ the point $w$ comes after $z$. In Section \ref{sec:quasistabilizationnatural} we prove invariance for quasi-stabilization maps
\[S_{w,z}^+:CFL_{UV}^\infty(Y,L,\ve{w},\ve{z},\sigma,\frP,\frs)\to CFL_{UV}^\infty(Y,L,\ve{w}\cup \{w\},\ve{z}\cup \{z\},\sigma',\frP,\frs)\] and
\[S_{w,z}^-:CFL_{UV}^\infty(Y,L,\ve{w}\cup \{w\},\ve{z}\cup \{z\},\sigma',\frP,\frs)\to CFL_{UV}^\infty(Y,L,\ve{w},\ve{z},\sigma,\frP,\frs),\] which are defined up to  $\frP$-filtered $\Z_2[U_\frP]$-chain homotopy. Here $\sigma'$ is a coloring which extends $\sigma$. 

Though it will take several sections to construct the maps and prove they are well defined, we now summarize that the maps will be defined by the formulas
\[S_{w,z}^+(\ve{x})=\ve{x}\times \theta^+\] and
\[S_{w,z}^-(\ve{x}\times \theta^+)=0,\qquad \text{ and } \qquad S_{w,z}^-(\ve{x}\times \theta^-)=\ve{x},\] for suitable choices of Heegaard diagrams and almost complex structures.

To define the quasi-stabilization map, we use the special connected sum operation from \cite{MOIntSurg}. In \cite{MOIntSurg}, Manolescu and Ozsv\'{a}th describe a way of adding new $w$ and $z$ basepoints to Heegaard multi-diagrams. They prove that for multi-diagrams with at least three sets of attaching curves (e.g. Heegaard triples or quadruples), there is an identification of  certain moduli spaces of holomorphic curves on the unstabilized diagram and certain moduli spaces of holomorphic curves on the stabilized diagram. They conjecture an analogous result  for the holomorphic curves on a Heegaard diagram with two sets of attaching curves (i.e. for the differentials of quasi-stabilized diagrams), but only prove the result for grid diagrams using somewhat ad-hoc techniques. We will soon prove Proposition \ref{lem:differentialcomp} below, computing the differential on quasi-stabilized diagrams for appropriate almost complex structures.

\subsection{Topological preliminaries on quasi-stabilization}
 \label{subsec:topprelim}
 Suppose that $\cH=(\Sigma, \ve{\alpha},\ve{\beta},\ve{w},\ve{z})$ is a diagram for $(Y,L,\ve{w},\ve{z})$. Given new basepoints $w,z$ in the same component of $ L\setminus (\ve{w}\cup \ve{z})$, such that $w$ occurs after $z$, we now describe a new diagram $\bar{\cH}_{p,\alpha_s}$, which depends on a choice of point $p\in \Sigma$ and curve $\alpha_s\subset \Sigma\setminus \ve{\alpha}$. For fixed $\alpha_s$ and $p$, the diagram $\bar{\cH}_{p,\alpha_s}$ will be defined up to an isotopy of $Y$ relative $\ve{w}\cup \ve{z}\cup \{w,z\}$ which maps $L$ to $L$.
 
 Given a diagram $\cH=(\Sigma, \ve{\alpha},\ve{\beta},\ve{w},\ve{z})$ as above, let $A$ denote the component of $\Sigma\setminus \ve{\alpha}$ which contains the basepoints adjacent to $w$ and $z$ on $L$. Let $p\in A\setminus (\ve{\alpha}\cup \ve{\beta}\cup\ve{w}\cup \ve{z})$ be a point. If $U_\alpha$ denotes the handlebody component of $Y\setminus \Sigma$ such that the $\ve{\alpha}$-curves bound compressing disks in $U_\alpha$, then there is a path $\lambda$ in $U_\alpha$ from $p$ to a point on $L$ between $w$ and $z$. Such a curve $\lambda$ is specified up to an isotopy fixing $\ve{w}\cup \ve{z}\cup \{w,z\}$ which maps $L$ to $L$ by requiring that $\lambda$ be isotopic in $\bar{U}_\alpha$ to a segment of $L$ concatenated with an embedded arc on $\Sigma\setminus \ve{\alpha}$.

Let $N(\lambda)$ denote a regular neighborhood of $\lambda$ inside of $U_\alpha$ such that $\d N(\lambda)$, the boundary of $N(\lambda)$ inside of $U_\alpha$, satisfies
\[\d N(\lambda)\cap L=\{w,z\}.\] Topologically $\d N(\lambda)$ is just a disk, which we denote by $D_1$. Also let $D_2$ denote $N(\lambda)\cap \Sigma$, which we note is also a disk. We can assume that $D_2\cap (\ve{\alpha}\cup \ve{\beta}\cup \ve{w}\cup \ve{z})=\varnothing$. Define 
\[\bar{\Sigma}_{p}=(\Sigma\setminus D_2)\cup D_1.\] The surface $\bar{\Sigma}_{p}$ is specified up to an isotopy which fixes  $(\ve{\alpha}\cup \ve{\beta}\cup \ve{w}\cup \ve{z})$. Figure \ref{fig::15} shows the situation schematically. 
 
\begin{figure}[ht!]
\centering
\includegraphics[scale=1.2]{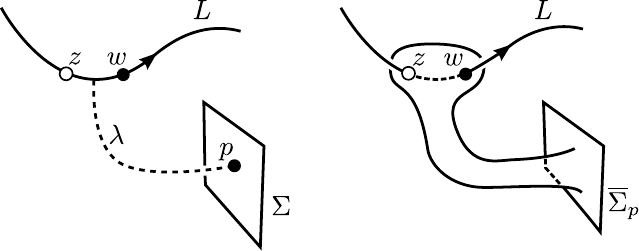}
\caption{The path $\lambda$ and the surfaces $\Sigma$ and $\bar{\Sigma}_p$. \label{fig::15}}
\end{figure}

We wish to extend the arc $\alpha_s\setminus D_2$ over all of $D_1$ to get a curve $\bar{\alpha}_s$ on $\bar{\Sigma}_p$. As is demonstrated in Figure \ref{fig::30}, there is not an isotopically unique way to do this relative the new basepoints $w$ and $z.$

\begin{figure}[ht!]
\centering
\includegraphics[scale=1.2]{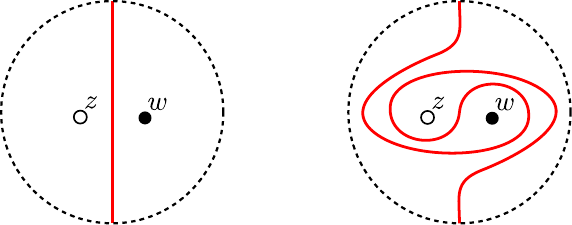}
\caption{Different choices of $\bar{\alpha}_s$ curve on $D_1$ interpolating $\alpha_s\setminus D_2$. There is a unique isotopy class of such curves such that the resulting $\bar{\alpha}_s$ curve on $\bar{\Sigma}_p$ bounds a compressing disk which doesn't intersect $L$.\label{fig::30}}
\end{figure}

The set of such curves is easily seen to be those generated by the images of the curve on the left in Figure \ref{fig::30} under finger moves of $w$ around $z$. Fortunately the arc $\alpha_s\setminus D_2$  can be extended over $D_1$ uniquely (up to isotopy) by requiring the resulting curve $\bar{\alpha}_s\subset \bar{\Sigma}_p$ to bound a compressing disk in $U_{\bar{\alpha}}$ which doesn't intersect $L$, where here $U_{\bar{\alpha}}$ denotes the component of $Y\setminus \bar{\Sigma}_p$ in which the $\ve{\alpha}$-curves bound compressing disks.

Given a diagram $\cH=(\Sigma, \ve{\alpha},\ve{\beta},\ve{w},\ve{z})$ and a point $p\in \Sigma$ as above, we can form a diagram 
\[\bar{\cH}_{p,\alpha_s}=(\bar{\Sigma}_p, \bar{\ve{\alpha}},\bar{\ve{\beta}},\ve{w}\cup \{w\}, \ve{z}\cup \{z\}),\] where $\bar{\ve{\alpha}}=\ve{\alpha}\cup \{\bar{\alpha}_s\}$ and $\bar{\ve{\beta}}=\ve{\beta}\cup \{\beta_0\}$. The curve $\beta_0$ is specified up to isotopy which fixes $(\Sigma\setminus D_2)\cup \{w,z\}$ though the exact choice of $\beta_0$ in principle depends on a choice of gluing data $\cJ$, described more precisely in the following section, but we suppress the dependence in the notation.  We write $\bar{\ve{\alpha}}$ for $\ve{\alpha}\cup \{\bar{\alpha}_s\}$, the curves on the stabilized diagram after performing special connected sum, and we write $\ve{\alpha}^+$ for $\ve{\alpha}\cup \{\alpha_s\}$, the curves on the unstabilized diagram on $\Sigma$. By abuse of notation, we will often write $\alpha_s$ to denote both $\alpha_s\subset \Sigma$ and  $\bar{\alpha}_s\subset \bar{\Sigma}_{p}$.

\begin{figure}[ht!]
\centering
\includegraphics[scale=1.2]{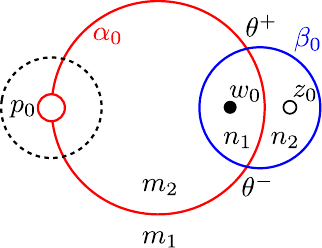}
\caption{The diagram $\cH_0$ used for quasi-stabilization, with multiplicities labelled. The dashed circle denotes where we will perform the neck stretching in the special connected sum. \label{fig::1}}
\end{figure}

\subsection{Gluing data and almost complex structures}\label{subsec:gluingdata}

In \cite{HFplusTQFT} the author describes a systematic way of proving invariance of free stabilization maps from the choices of almost complex structures. Here we introduce the analogous idea for quasi-stabilization.

Suppose that $\cH=(\Sigma, \ve{\alpha},\ve{\beta},\ve{w},\ve{z})$ is a diagram for $\bL=(L,\ve{w},\ve{z})$ and $w,$ and $z$ are two new consecutive basepoints on $L$ with $z$ following $w$. Suppose $p\in A\setminus (\ve{w}\cup \ve{z}\cup \ve{\alpha}\cup \ve{\beta})$ is a distinguished point, where here $A$ denotes the component of $\Sigma\setminus \ve{\alpha}$ containing the basepoints on $L$ adjacent to $w$ and $z$. Let $\bar{\Sigma}_p$ denote the Heegaard surface described in the previous subsection. We have the following definition:

\begin{define} We define \textbf{gluing data} to be a collection $\cJ=(\psi,J_s,J_{s,0},B,B_0,r,r_0,p_0,\iota,\phi)$ where
\begin{enumerate}
\item $\psi:\Sigma\to \bar{\Sigma}_p$ is a diffeomorphism which is fixed outside of $D_2$ and maps $\alpha_s$ to $\bar{\alpha}_s$;
\item $B\subset \Sigma$ is a closed ball containing $p$ which doesn't intersect $(\ve{w}\cup \ve{z}\cup \ve{\alpha}\cup \ve{\beta})$ and such that $\alpha_s\cap B$ is a closed arc;
\item the point $p_0\in \alpha_0\setminus \beta_0$ is the connected sum point;
\item $B_0\subset S^2$ is a closed ball containing $p_0$ which doesn't intersect $\beta_0$ and such that $B_0\cap \alpha_0$ is a closed arc;
\item $J_s$ is an almost complex structure on $\Sigma\times [0,1]\times \R$ which is split on $B$;
\item $J_{s,0}$ is an almost complex structure on $S^2\times [0,1]\times \R$ which is split on $B_0$;
\item $r$ and $r_0$ are real numbers such that $0<r,r_0<1;$
\item using the unique (up to rotation) conformal identifications of $(B,p)$ and $(B_0,p_0)$ as $(\bD,0)$, where $\bD$ denotes the unit complex disk, $\iota$ is an embedding of $\bar{S^2\setminus r_0\cdot B_0}$ into $r\cdot B\subset \Sigma$ such that
\[\iota(\alpha_0)\subset \alpha_s,\]
\[(\psi\circ\iota)(z_0)=z,\qquad (\psi\circ\iota)(w_0)=w,\] and
\[(r\cdot B)\setminus \iota(S^2\setminus r_0\cdot B_0)\] is a closed annulus;
\item  letting $\tilde{A},A$ and $A_0$ denote the closures of the annuli $B\setminus \iota (S^2\setminus B_0)$, $B\setminus r \cdot B$ and $B_0\setminus r_0 \cdot B_0$ respectively, then 
\[\phi:\tilde{A}\to S^1\times [-a,1+b]\] is a diffeomorphism which sends the annulus $A$ to $[-a,0]$ and $\iota(A_0)$ to $[1,1+b]$ and is conformal on $A$ and $A_0$.
\end{enumerate}
\end{define}

The space of embeddings $\iota$ is connected since if $a$ denotes the arc on the left side of Figure \ref{fig::30}, the space of diffeomorphisms $f:B\to B$ mapping $\d B\cup a$ to itself and fixing $\{z,w\}$ is connected. That the space of diffeomorphisms $\psi:\Sigma\to \bar{\Sigma}_p$ in the definition is also connected follows for similar reasons.

Gluing data $\cJ$ and a choice of neck length $T$ determines an almost complex structure $\cJ(T)$ on $\bar{\Sigma}_{p}\times [0,1]\times \R$.

\subsection{Computing the quasi-stabilized differential}
We now wish to compute the differential after performing quasi-stabilization. We have the following Maslov index computation:
\begin{lem}\label{lem:Maslovindexdisk}If $\phi_0$ is a homology class of disks on $\cH_0$, then using the multiplicities in Figure \ref{fig::1}, we have \[\mu(\phi_0)=(n_1+n_2+m_1+m_2)(\phi_0).\]
\end{lem}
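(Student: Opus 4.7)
My plan is to apply the Lipshitz cylindrical Maslov index formula
\[
\mu(\phi_0) = e(\phi_0) + n_{\ve{x}}(\phi_0) + n_{\ve{y}}(\phi_0),
\]
where $e$ denotes the Euler measure and $n_{\ve{x}}, n_{\ve{y}}$ denote the average point measures at the $\ve{x}$- and $\ve{y}$-corners. The diagram $\cH_0$ is a sphere carrying a single $\alpha$-circle and a single $\beta$-circle meeting transversely in two points (call them $\theta^+$ and $\theta^-$), cutting $S^2$ into four bigon regions whose multiplicities in $\phi_0$ are by definition $n_1, n_2, m_1, m_2$. So the right-hand side of the Lipshitz formula is a linear combination of these four numbers, and the problem reduces to computing the coefficients.

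First I would compute the Euler measure. Each of the four regions is an embedded bigon, which has Euler measure $\tfrac12$ (one can see this from $e(R) = \chi(R) - c/4$ applied to a $2$-gon, or by recalling that bigons are the basic index-$1$ domains in Heegaard Floer homology). Linearity of $e$ in the multiplicities then gives
\[
e(\phi_0) = \tfrac{1}{2}\bigl(n_1+n_2+m_1+m_2\bigr).
\]

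Next I would compute the corner contributions. Because $\cH_0$ has only two intersection points and all four bigons meet at each of them, every corner of $\ve{x}$ (and of $\ve{y}$) sees all four regions. Thus the averaged local multiplicity at each intersection point is exactly
\[
\tfrac{1}{4}(n_1+n_2+m_1+m_2),
\]
so $n_{\ve{x}}(\phi_0) = n_{\ve{y}}(\phi_0) = \tfrac14(n_1+n_2+m_1+m_2)$; this holds whether $\ve{x}=\ve{y}$ (in which case we just add the same term twice) or $\ve{x}\ne \ve{y}$.

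Adding the three pieces yields $\mu(\phi_0) = \tfrac12\Sigma + \tfrac14\Sigma + \tfrac14\Sigma = \Sigma$, where $\Sigma = n_1+n_2+m_1+m_2$, which is the desired formula. I do not anticipate any serious obstacle; the only real point to be careful about is identifying exactly which regions meet at $\theta^{\pm}$ in the figure, to confirm that all four bigons indeed share both corners, and verifying the sign/orientation conventions so that Lipshitz's formula applies with the stated Euler-measure normalizations.
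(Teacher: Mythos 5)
Your proof is correct, but it takes a different route from the paper. The paper's argument (a single sentence) is generative: check the formula on the constant disk, where both sides vanish, and observe that both $\mu$ and $n_1+n_2+m_1+m_2$ change by the same amount when one splices in any Maslov index $1$ strip on $\cH_0$; since every homology class of disks on $\cH_0$ is reached from the constant class by such splicings, the formula follows. Your route instead invokes Lipshitz's combinatorial formula $\mu = e + n_{\ve{x}} + n_{\ve{y}}$ and evaluates each term directly using the bigon structure of the four regions ($e$ of a bigon is $1/2$; all four regions share both corners, so the averaged point measures at $\theta^+$ and $\theta^-$ are each $\tfrac14\Sigma$). Both arguments are valid. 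Your version has the advantage of being self-contained and explicit (no need to list the index-$1$ classes that generate), while the paper's splicing argument is shorter and is the same template the author reuses for the triangle-version in Lemma \ref{lem:Maslovindextrianglediagram}, so it fits the paper's style better. One small point worth being careful about, which you flag yourself: the exact region labeling in Figure \ref{fig::1} is immaterial for your computation because the answer is symmetric in the four multiplicities, so there is no real issue.
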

\begin{proof} The formula is easily verified for the constant disk and respects splicing in any of the Maslov index 1 strips.
\end{proof}

Let $\cJ$ denote gluing data as in the previous section, and let $\cJ(T)$ denote the almost complex structure on $\bar{\cH}_{p,\alpha_s}$ determined by $\cJ$ for a choice of neck length $T$. We have the following:

\begin{prop}\label{lem:differentialcomp}Suppose that $\cH$ is a strongly $\frs$-admissible diagram and that $\cJ$ is gluing data with almost complex structure $J_s$ on $\Sigma\times [0,1]\times \R$. Then  $\bar{\cH}_{p,\alpha_s}$ is also strongly $\frs$-admissible and for sufficiently large $T$ there is an identification of uncolored differentials (i.e. before we tensor with $\cL$)
\[\d_{\bar{\cH}_{p,\alpha_s},\cJ(T)}=\begin{pmatrix}\d_{\cH,J_s}&U_{w}+U_{w'}\\
V_{z}+V_{z'}& \d_{\cH,J_s}
\end{pmatrix},\] where the basepoints $w$ and $z$ are placed between $w'$ and $z'$ on $\bL$. 
\end{prop}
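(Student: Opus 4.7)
The plan is to treat three steps in sequence: establishing strong admissibility of $\bar\cH_{p,\alpha_s}$, running a neck-stretching degeneration to decompose holomorphic disks on the stabilized diagram into pairs of pieces on $\cH$ and $\cH_0$, and finally a Maslov-index count that recovers the matrix formula.

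First I would verify that strong $\frs$-admissibility is preserved. The only new $\Spin^c$-periodic domains on $\bar\cH_{p,\alpha_s}$ compared to those pulled back from $\cH$ arise from integer multiples of the small bigons bounded by arcs of $\bar\alpha_s$ and $\beta_0$ near the neck. Each such bigon has positive multiplicity at a basepoint in $\ve{w}\cup\{w\}$, so the vanishing of $\ve{w}$-multiplicities on a $\Spin^c$-periodic domain forces these multiples to be zero. Thus the new $\Spin^c$-periodic lattice essentially agrees with that of $\cH$, and strong $\frs$-admissibility transfers.

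Next I would perform the neck stretching. The almost complex structure $\cJ(T)$ contains a neck of length comparable to $T$ modeled on the matching annulus $\tilde A$ along which $\iota$ glues $r\cdot B\subset\Sigma$ to $S^2\setminus r_0\cdot B_0$. A standard SFT-style compactness argument shows that a sequence $u_n$ of $\cJ(T_n)$-holomorphic Maslov index one strips on $\bar\cH_{p,\alpha_s}$, with $T_n\to\infty$, subsequentially converges to a holomorphic building consisting of a $J_s$-holomorphic curve $u_\Sigma$ on $\Sigma\times[0,1]\times\R$ and a $J_{s,0}$-holomorphic curve $u_{S^2}$ on $S^2\times[0,1]\times\R$, matched across the neck by evaluation conditions on their cylindrical ends. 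Conversely, a gluing theorem identifies matched pairs with $\cJ(T)$-holomorphic strips for $T$ sufficiently large. Under this identification, Maslov indices add: $\bar\phi\leftrightarrow(\phi,\phi_0)$ with $\mu(\bar\phi)=\mu(\phi)+\mu(\phi_0)$.

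Finally I would use Lemma \ref{lem:Maslovindexdisk} to classify Maslov index one pairs. Positivity of the domain forces $\mu(\phi),\mu(\phi_0)\geq 0$, so $(\mu(\phi),\mu(\phi_0))\in\{(1,0),(0,1)\}$. In the $(1,0)$ case, $\phi_0$ is forced to be constant at $\theta^+$ or $\theta^-$ (all four multiplicities vanish), producing the diagonal entries $\d_{\cH,J_s}$. In the $(0,1)$ case, $\phi$ is constant on $\cH$ and $\phi_0$ is a bigon on $\cH_0$ with exactly one of $n_1,n_2,m_1,m_2$ equal to $1$. Two of these four bigons are the small bigons entirely contained in $\cH_0$, covering $w$ and $z$ respectively and contributing $U_w$ and $V_z$. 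The other two bigons have their multiplicity-one region meeting the neck, and the matching condition forces the completing piece on $\Sigma$ to cover exactly the region $A\subset\Sigma$ containing $p$, thereby picking up the adjacent basepoints $w'$ and $z'$ and contributing $U_{w'}$ and $V_{z'}$. Each such bigon admits a unique holomorphic representative modulo the $\R$-action by uniformization, so summing yields off-diagonal entries $U_w+U_{w'}$ and $V_z+V_{z'}$ connecting $\theta^+$ and $\theta^-$ in opposite directions, which is exactly the claimed matrix. The main obstacle is the gluing/matching bookkeeping: one must verify transversality for the matched pairs, rule out boundary degenerations contributing at Maslov index one, and show that the evaluation condition at the neck is satisfied transversely and uniquely for each of the four relevant bigon types so that the ``large'' bigons really do pick up $w'$ and $z'$ rather than some other combination.
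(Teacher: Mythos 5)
Your overall framework (neck stretching, compactness/gluing, then an index count) is the right one, but the index bookkeeping contains a genuine error that hides the hardest part of the proof. Because the connected sum point lies \emph{on} $\alpha_s$ (so $p_0\in\alpha_0$), the Maslov indices of the two pieces do not simply add: the correct relation is $\mu(\bar\phi)=\mu(\phi_\Sigma)+\mu(\phi_0)-(m_1+m_2)$, where $m_1,m_2$ are the multiplicities of the two regions abutting the neck. Consequently your case analysis $(\mu(\phi),\mu(\phi_0))\in\{(1,0),(0,1)\}$, with $\phi_0$ constant in the first case, is false: for every $k\ge 0$ there are index-one classes on $\bar{\cH}_{p,\alpha_s}$ whose $\Sigma$-part is an index-one strip crossing $p$ with multiplicity $k$, paired with the unique class on $(S^2,\alpha_0,\beta_0)$ in $\pi_2(\theta^\pm,\theta^\pm)$ with $m_1=m_2=k$ and $n_1=n_2=0$ (which has index $2k$ by Lemma \ref{lem:Maslovindexdisk}). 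These configurations are exactly what produce the diagonal entries, and to identify them with $\d_{\cH,J_s}$ one must show that, for each fixed divisor $\ve{d}=\rho^{p}(u_\Sigma)\in\Sym^{k}(\bD)$, the number of holomorphic curves on $(S^2,\alpha_0,\beta_0)$ in that class satisfying the matching condition $\rho^{p_0}(u_0)=\ve{d}$ is odd. This divisor-matching count is the analytic heart of the argument (it is proved by moving the divisor, degenerating it toward $\{0\}\times\R$ so the limit consists of $\beta_0$-boundary degenerations, and invoking Theorem \ref{thm:boundarydegencount}); your proposal omits it entirely, and without it the diagonal entries are not established.

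Second, the off-diagonal terms $U_{w'}$ and $V_{z'}$ do not come from bigons counted by uniformization, and your stated plan to ``rule out boundary degenerations contributing at Maslov index one'' is exactly backwards: these terms arise from limit configurations in which the $\Sigma$-side piece is a cylindrical $\ve{\alpha}^+$-boundary degeneration whose domain is one of the two components of $A\setminus\alpha_s$ (hence covers $w'$ or $z'$), matched with an index-one strip on $(S^2,\alpha_0,\beta_0)$ carrying a puncture at $p_0$. The count of such boundary degenerations is $1\pmod{2}$ by Theorem \ref{thm:boundarydegencount}, with transversality obtained by perturbing the $\ve{\alpha}$-curves since these domains are $\alpha$-injective, and only after gluing does one obtain the entries $U_{w'}$ and $V_{z'}$. (A further point you would also need, which the paper argues by an elementary complex-analysis lemma about punctures, is that no limit curve can have boundary on both $\alpha_s$ and a $\ve{\beta}$-curve, so that the $\Sigma$-side strips are honest flowlines on $(\Sigma,\ve{\alpha},\ve{\beta})$.) With the corrected index formula, the divisor-matching count, and the boundary-degeneration contributions included, the matrix formula follows; as written, your argument does not reach it.
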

\begin{proof}Suppose that $u_i$ is a sequence of Maslov index 1 curves on $\bar{\cH}_{p,\alpha_s}$ for the almost complex structure $\cJ(T_i)$ for a sequence of $T_i$ with $T_i\to \infty$. From the sequence $u_i$ we can extract a weak limit of curves on the diagrams $(\Sigma, \ve{\alpha}^+, \ve{\beta},\ve{w},\ve{z})$ and $\cH_0$. Let $U_\Sigma,$ and $U_0$ denote these collections of curves. The curves in $U_\Sigma$ consist of flowlines on $(\Sigma, \ve{\alpha}^+,\ve{\beta},\ve{w},\ve{z})$ as well as $\ve{\alpha}$- and $\ve{\beta}$-boundary degenerations on $(\Sigma, \ve{\alpha}^+)$ and $(\Sigma, \ve{\beta})$, and closed surfaces mapped into $\Sigma$. The holomorphic curves are now allowed to have a puncture along the $\alpha$-boundary which is mapped asymptotically to $p$.

We first note that any flowline in the limit (i.e. a map $u:S\to \Sigma\times [0,1]\times \R$ which maps $\d S$ to $(\ve{\beta}\times \{0\}\cup\ve{\alpha}^+\times \{1\}\times \R$ such that each component of $S$ has both $\ve{\alpha}^+$ and $\ve{\beta}$ components) on the diagram $\cH^+=(\Sigma, \ve{\alpha}^+,\ve{\beta},\ve{w},\ve{z})$ must actually be a legitimate flow line on $(\Sigma, \ve{\alpha},\ve{\beta},\ve{w},\ve{z})$. This is because if $u$ is any holomorphic curve which is part of a weak limit of the curves $u_{T_i}$, then $u$ cannot have a puncture asymptotic to an intersection point $\alpha_s\cap \beta_j$ for $\beta_j\in \ve{\beta}$. Hence if $u$ is part of the weak limit of $u_i$, and if $S$ denotes the source of $u$, then if $\d S$ has any points mapped to $\alpha_s$, then $ S$ must have boundary component with a single puncture which is mapped to $\alpha_s$. Projecting to $I\times \R$, we note that either $u|_{\d S}$ obtains a local extrema, or $u$ is asymptotic to both $+\infty$ and $-\infty$ as one approaches the puncture. The first case isn't possible since one can use the doubling trick to create an analytic function $\bD\to \bD$ (where here $\bD=\{z:|z|<1\}$) which maps $\bD\cap \{\im(z)\ge 0\}$ to $\bD\cap \{\im(z)\ge 0\}$ but which satisfies $f'(z)=0$ for some $z\in \R$, which is impossible by writing down a local model. The latter case is impossible since $u$ must extend to a continuous function over the punctures. Hence any such $u$ must be constant in the $I\times \R$ component, which implies that $u$ cannot have any portion of $\d S$ mapped onto a $\ve{\beta}$-curve. Hence the curves in the weak limit can be taken to be disks on $(\Sigma, \ve{\alpha},\ve{\beta},\ve{w},\ve{z})$, $\ve{\alpha}^+$- or $\ve{\beta}$-degenerations, or closed surfaces.

To avoid ``annoying'' curves (i.e. maps into $\Sigma\times [0,1]\times \R$ which are constant in the $I\times \R$-component) in the $\ve{\beta}$- or $\ve{\alpha}^+$-degenerations, we observe that by rescaling the $I\times \R$ component, we can instead get curves that map into $\Sigma\times (-\infty,1]\times \R$ or $\Sigma\times S^2$ such that the $(-\infty,1]\times\R$ or $S^2$ components are nonconstant. Maps into $\Sigma\times (-\infty,1]\times \R$ are cylindrical $\ve{\alpha}^+$-boundary degenerations. As remarked earlier, Maslov index two cylindrical boundary degenerations do achieve transversality for generic perturbation of the $\ve{\alpha}^+$-curves (in fact we need transversality of these curves to prove that the differential squares to zero).

We now wish to compute exactly which of the above degenerations can occur in a weak limit of the sequence $u_i$ of Maslov index one $\cJ(T_i)$-holomorphic curves. Assume without loss of generality that all of the $u_i$ are in the same homology class $\phi$. 

Suppose that $U_\Sigma$ consists of a collection $U_\Sigma'$ of curves on $(\Sigma,\ve{\alpha},\ve{\beta})$ (flowlines, boundary degenerations, closed surfaces) and a collection of curves $\cA$ in $(\Sigma, \ve{\alpha}^+,\ve{\beta})$ which have a boundary component which maps to $\alpha_s$. As we've already remarked, the collection $\cA$ consists exactly of cylindrical $\ve{\alpha}^+$-boundary degenerations. Letting $\phi_\Sigma'$ denote the underlying homology class of $U_\Sigma'$, we define a combinatorial Maslov index for $U_\Sigma$ by
\[\mu(U_\Sigma)=\mu(\phi_\Sigma')+m_1(\cA)+m_2(\cA)+2\sum_{\substack{\cD\in C(\Sigma\setminus \ve{\alpha}),\\ \alpha_s\cap \cD=\varnothing}} n_{\cD}(\cA),\] where $C(\Sigma\setminus \ve{\alpha})$ denotes the connected components of $\Sigma\setminus \ve{\alpha}$. By Lemma \ref{lem:Maslovindexdisk} the Maslov index of $U_0$ satisfies
\[\mu(U_0)=m_1(\phi)+m_2(\phi)+n_1(\phi)+n_2(\phi).\] The formula for $\mu(U_\Sigma)$ does not necessarily count the expected dimension of anything since we've only defined it combinatorially, though we can compute the Maslov index $\mu(\phi)$ using these formulas, as follows:
\[\mu(\phi)=\mu(U_\Sigma)+\mu(U_0)-m_1(\phi)-m_2(\phi)\]
\[=\mu(\phi_\Sigma')+n_1(\phi)+n_2(\phi)+m_1(\cA)+m_2(\cA)+2\sum_{\substack{\cD\in C(\Sigma\setminus \ve{\alpha})\\ \alpha_s\cap \cD=\varnothing}} n_{\cD}(\cA).\] In the above sum, each term is nonnegative, and by assumption the total sum is equal to one. If $\mu(\phi)=1$, we immediately have that $n_{\cD}(\cA)=0$ for $\cD\in C(\Sigma\setminus \ve{\alpha}),$ with $\alpha_s\cap \cD=\varnothing$. Now $\mu(\phi_\Sigma')$ does actually count the expected dimension of the moduli space of $\phi_\Sigma'$, and in particular if $\phi_\Sigma'$ has a representative as a broken curve, we must have $\mu(\phi_\Sigma')\ge 0$ with equality iff $\phi_\Sigma'$ is the constant disk. 

As a consequence we see that if $\mu(\phi)=1$, we have that exactly one of $\mu(\phi_\Sigma'),n_1(\phi),n_2(\phi),m_1(\cA),$ or $m_2(\cA)$ is 1, and the rest are zero. The cases of $n_1(\phi)=1$ or $n_2(\phi)=1$ respectively are easy to analyze, and those possibilities contribute summands of
\[\begin{pmatrix} 0& U_w\\
0& 0\end{pmatrix}\qquad \text{ and }\qquad  \begin{pmatrix}0&0\\
V_z& 0
\end{pmatrix},\] respectively, to $\d_{\bar{\cH}_{p,\alpha_s}}$.

We now consider broken curves in the limit with $\mu(\phi_\Sigma')=1$ and the remaining terms zero. In this case case we have that $m_1(\cA)=m_2(\cA)=0$ (so $\cA=0$) and $n_1(\phi)=n_2(\phi)=0$. In this case, we observe that $\phi_\Sigma'$ is represented by $U_\Sigma$ and $\mu(\phi_\Sigma')=1$, so the limit cannot contain any boundary degenerations or closed surfaces.  Furthermore, by Maslov index considerations we have that $U_\Sigma$ consists of a single Maslov index one flowline, which we denote by $u_\Sigma$.

 We now consider the curves in $U_0$. There must be a component of $U_0$ which satisfies a matching condition with $u_\Sigma$. Note also that since $U_\Sigma$ consists only of a single disk on $(\Sigma, \ve{\alpha},\ve{\beta})$, there cannot be any curves in $U_0$ which have a point on the boundary mapped to $p_0$ or be asymptotic to $p_0$.

Let $u_0$ denote the component of $U_0$ which satisfies the matching condition
\[\rho^{p}(u_\Sigma)=\rho^{p_0}(u_0).\] In particular, this forces $m_1(u_0)=m_2(u_0)$. We also have $n_1(u_0)=n_2(u_0)=0$. Here, if $u:S\to \Sigma\times [0,1]\times \R$ is a holomorphic disk,  $\rho^q(u)$ is the divisor \[(\pi_\bD\circ u)(\pi_\Sigma\circ u)^{-1}(q)\in \Sym^{n_q(u)}(\bD).\] Any additional components $u_0'$ of of $U_0$ must also satisfy $n_1(u_0')=n_2(u_0')=0$ and also can't have an interior point or boundary point mapped to $p_0$, and hence must be constant. Hence $U_0$ consists exactly of a holomorphic strip $u_0$ with $m_2(u_0)=m_1(u_0)$ which satisfies a matching condition with $u_\Sigma$, i.e. $(u_\Sigma,u_0)$ are prematched strips.

We thus have shown that if $\mu(\phi_\Sigma')=1$, then the weak limit of the curves $u_i$ is  a prematched strip, so following standard gluing arguments (cf. \cite{LCylindrical}*{App. A}), the count of $\hat{\cM}_{\cJ(T)}(\phi)$ is equal to the count of prematched strips with total homology class $\phi$, for sufficiently large $T$. If $\phi_0$ is a homology class of disks in $\pi_2(\theta^+,\theta^+)$ or $\pi_2(\theta^-,\theta^-)$, let $\cM(\phi_0,\ve{d})$ denote the set of holomorphic strips $u$ representing $\phi_0$ with $\rho^{p_0}(u)=\ve{d}$. Note that there is a unique homology class of disks $\phi_0\in \pi_2(\theta^+,\theta^+)$ with $m_1(\phi_0)=m_2(\phi_0)=|\ve{d}|$ and $n_1(\phi_0)=n_2(\phi_0)=0$.

 We claim that \[\cM(\phi_0,\ve{d})\equiv 1\pmod{2}\] if $m_1(\phi_0)=m_2(\phi_0)=|\ve{d}|$ and $n_1(\phi_0)=n_2(\phi_0)=0$. We consider a path $\ve{d}_t$ between two divisors $\ve{d}_0$ and $\ve{d}_1$ and consider the 1-dimensional space 
\[\cM=\coprod_{t\in I}\cM(\phi_0,\ve{d}_t).\] We count the ends of $\cM$. There are ends corresponding to $\cM(\phi_0,\ve{d}_0)$ and $\cM(\phi_0,\ve{d}_1)$. On the other hand, there are ends corresponding to strip breaking or other types of degenerations. Any curve in the degeneration cannot have $p_0$ in it's boundary, which constrains any degeneration to be into disks of the form $\pi_2(\theta^+,\theta^+)$ or $\pi_2(\theta^-,\theta^-)$. But if any nontrivial strip breaking occurs, the Maslov index of the matching component drops, contradicting the formula for the Maslov index. Hence the only ends of $\cM$ correspond to $\cM(\phi_0,\ve{d}_0)$ and $\cM(\phi_0,\ve{d}_1)$, implying that 
\[\# \cM(\phi_0,\ve{d}_0)\equiv \# \cM(\phi_0,\ve{d}_1) \pmod 2.\] We now consider a path of divisors $\ve{d}_T$ consisting of $k$ points in $I\times\R$ spaced at least $T$ apart which approach the line $\{0\}\times \R$ as $T\to \infty$. Letting $T\to \infty$, since $p_0$ is not on $\beta_0$, we know that the Gromov limit of the curves in $\cM(\phi_0,\ve{d}_T)$ consists of $k$ cylindrical $\beta_0$-degenerations, and a single constant holomorphic strip. Applying Theorem \ref{thm:boundarydegencount}, we get that the total count of the boundary of \[\overline{\sqcup_T\cM(\phi_0,\ve{d}_T)}\] is $\#\cM(\phi_0,\ve{d}_1)+1$, implying the claim.

Disks which consist of preglued flowline $(u_\Sigma, u_0)$ glued together thus provide a total contribution of
\[\begin{pmatrix}\d_{\cH,J_s}& 0\\
0& \d_{\cH, J_s}
\end{pmatrix},\] to the differential.

We now consider the last contribution to the differential. These correspond to having $\cD(\phi_1')=0$ and $n_1=n_2=0$, and $m_1(\cA)+m_2(\cA)=1$. In this case $\cA$ is an $\ve{\alpha}^+$-boundary degeneration on $(\Sigma, \ve{\alpha}^+,\ve{\beta})$. Since \[\sum_{\substack{\cD\in C(\Sigma\setminus \ve{\alpha})\\ \alpha_s\cap \cD=\varnothing}} n_{\cD}(\cA)=0,\] we have that $\cD(\cA)$ is constrained to one of two domains. For each of the two possible domains of $\cD(\cA)$, there is exactly one corresponding choice of domain for $\phi_0$, the homology class of $U_0$, which is just the domain with exactly one of $m_1$ and $m_2$ equal to one, and the other equal to zero, as well as $n_1$ and $n_2$ also zero.

To count such disks, note that each component of $U_\Sigma$ is constrained to be either a trivial disk or a cylindrical $\ve{\alpha}^+$-degeneration. The domain of $\phi_0$ constrains $U_0$ to be a single a Maslov index 1 flowline on $(S^2,\alpha_0,\beta_0)$ with a single puncture on the $\alpha$ boundary, of which there is a unique representative modulo the $\R$-action. Now the curve in $U_\Sigma$ which has nontrivial domain is a map from a punctured surface $u:S\to \Sigma\times (-\infty,1]\times \R$. At the punctures, $u$ is asymptotic to the points in $\{x_1,\dots,x_k,p\}$ where $x_1,\dots, x_k$ is a collection of intersection points of $\ve{\alpha}$- and $\ve{\beta}$-curves where exactly one $x_i$ is on each $\ve{\alpha}$-curve which intersects the domain $u$ nontrivially. By Theorem \ref{thm:boundarydegencount}, we know that the count of such curves is $1$ modulo 2. Since these homology classes are $\alpha$-injective, we know that by perturbing the $\ve{\alpha}^+$-curves, we achieve transversality. Hence by gluing we get that for sufficiently large neck length, the number of representatives of such curves is exactly 1. There are two domains which are counted by such curves, and with the above count we see that such curves make contributions of
\[\begin{pmatrix} 0& U_{w'}\\
0&0
\end{pmatrix}\qquad \text{ and } \qquad \begin{pmatrix}0& 0\\
V_{z'}& 0
\end{pmatrix},\] where $w$ and $z$ are immediately adjacent to $w'$ and $z'$ on $\bL$. Summing together all of the contributions, we see that the differential takes the form

\[\d_{\bar{\cH}_{p,\alpha_s},\cJ(T)}=\begin{pmatrix}\d_{\cH,J_s}& U_w+U_{w'}\\
V_z+V_{z'}& \d_{\cH, J_s}
\end{pmatrix}.\]
\end{proof}

Note that we tensor $CFL_{UV,0}^\infty(\cH,\frs)$ with $\cL$ so that the differential squares to zero. When we do this, we set $V_z=V_{z'}$, and the bottom left entry of the differential vanishes.

\subsection{Dependence of quasi-stabilization on gluing data}

In this subsection we prove some initial results about quasi-stabilization and change of almost complex structure maps. The reader should compare this to \cite{HFplusTQFT}*{Sec. 6}, where the analogous arguments are presented for free stabilization.

\begin{lem}\label{lem:transitionmapsstabilize}Suppose that $\cJ$ is gluing data. Then there is an $N$ such that if $T,T'>N$ and $\cJ(T),$ and $\cJ(T')$ achieve transversality, then
\[\Phi_{\cJ(T)\to \cJ(T')}\simeq \begin{pmatrix}1&*\\
0&1
\end{pmatrix}.\]
\end{lem}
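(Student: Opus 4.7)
The plan is to mimic the neck-stretching degeneration argument from the proof of Proposition~\ref{lem:differentialcomp}, but applied now to the Maslov index $0$ strips counted by the change-of-almost-complex-structure map $\Phi_{\cJ(T)\to \cJ(T')}$. For $T,T'>N$ sufficiently large, I would choose a generic $1$-parameter family $\{J_\tau\}_{\tau\in[0,1]}$ of almost complex structures on $\bar{\Sigma}_p\times[0,1]\times\R$ interpolating $\cJ(T)$ and $\cJ(T')$ with the connect-sum neck kept uniformly long throughout the family.

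Next I would extract weak limits of Maslov index $0$ $J_{\tau_i}$-holomorphic strips as $T_i,T_i'\to\infty$. Each limit splits into collections $U_\Sigma$ of curves on $(\Sigma,\ve{\alpha}^+,\ve{\beta},\ve{w},\ve{z})$ and $U_0$ on $\cH_0$, with matching conditions at the puncture, exactly as in Proposition~\ref{lem:differentialcomp}. Applying the Maslov index decomposition
\[
0=\mu(\phi)=\mu(\phi_\Sigma')+n_1(\phi)+n_2(\phi)+m_1(\cA)+m_2(\cA)+2\sum_{\substack{\cD\in C(\Sigma\setminus\ve{\alpha})\\ \alpha_s\cap\cD=\varnothing}} n_{\cD}(\cA),
\]
together with non-negativity of every summand, forces each term on the right to vanish. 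Hence $\cA=0$, $n_1(\phi)=n_2(\phi)=0$, $\mu(\phi_\Sigma')=0$, and the limit is a prematched pair $(u_\Sigma,u_0)$ where $u_\Sigma$ is a Maslov index $0$ strip on $(\Sigma,\ve{\alpha},\ve{\beta},\ve{w},\ve{z})$ and $u_0$ is a Maslov index $0$ disk on $\cH_0$ satisfying $m_1(u_0)=m_2(u_0)$ and $n_1(u_0)=n_2(u_0)=0$.

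Because the gluing data prescribes a fixed $J_s$ on $\Sigma$ independent of neck length, the restriction of $J_\tau$ to $\Sigma$ is the constant family at $J_s$. The count of such $u_\Sigma$ with $\mu=0$ in a constant family is the identity on $CFL_{UV}^\infty(\cH,\frs)$. By Lemma~\ref{lem:Maslovindexdisk}, any $\cH_0$ domain connecting $\theta^+$ and $\theta^-$ has Maslov index at least $2$, which combined with $\mu(\phi_\Sigma')\geq 0$ is incompatible with total index $0$; hence the lower-left $\theta^+\mapsto\theta^-$ entry of the matrix vanishes directly. The surviving diagonal matched pairs are enumerated by the divisor-sliding plus Theorem~\ref{thm:boundarydegencount} argument used in Proposition~\ref{lem:differentialcomp}, and each diagonal block reduces to the identity on the corresponding $\theta^\pm$ subcomplex. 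The upper-right entry is not pinned down by this analysis and is left as~$*$.

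The hard part will be the gluing step: one must verify that every prematched pair produced by the weak-limit analysis is realized by a unique $J_\tau$-holomorphic strip on $\bar{\cH}_{p,\alpha_s}$ for $T,T'$ sufficiently large, and that the total count indeed assembles into the identity on each diagonal subcomplex. This is handled by adapting the standard gluing estimates used in Proposition~\ref{lem:differentialcomp} (in the spirit of \cite{LCylindrical}). The $\frP$-filteredness and $\Z_2[U_\frP]$-equivariance of the resulting chain homotopy follow from the same observation as in the proof of Lemma~\ref{lem:PhiPsiinvaraints}.
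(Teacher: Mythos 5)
Your overall route is exactly the one the paper intends: its proof of this lemma is a one-line pointer to the free-stabilization analogue in \cite{HFplusTQFT} together with the neck-stretching and Maslov index techniques of Proposition \ref{lem:differentialcomp}, and your weak-limit extraction, the index identity $0=\mu(\phi)=\mu(\phi_\Sigma')+n_1+n_2+m_1(\cA)+m_2(\cA)+2\sum n_{\cD}(\cA)$ with termwise vanishing, the observation that the $\Sigma$-level family is the constant family at $J_s$ (so index $0$ forces $\phi'_\Sigma$ to be constant), and the gluing/filtration remarks are all the right ingredients.

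There is, however, one step that fails as written: the claim that ``by Lemma \ref{lem:Maslovindexdisk}, any $\cH_0$ domain connecting $\theta^+$ and $\theta^-$ has Maslov index at least $2$.'' This is false: Lemma \ref{lem:Maslovindexdisk} gives $\mu(\phi_0)=(n_1+n_2+m_1+m_2)(\phi_0)$, and each of the four bigons of $(S^2,\alpha_0,\beta_0)$ has exactly one of these multiplicities equal to $1$, hence Maslov index $1$ (these are precisely the classes producing the off-diagonal entries $U_w+U_{w'}$ and $V_z+V_{z'}$ of the differential). The correct way to kill the lower-left block is already contained in the constraints you derived: the limit forces $n_1(u_0)=n_2(u_0)=0$ and $m_1(u_0)=m_2(u_0)$ (indeed $m_1=m_2=0$, since with $\phi'_\Sigma$ constant and $\cA=0$ there is nothing on the $\Sigma$-side for a neck-crossing curve to match), and no class in $\pi_2(\theta^{\pm},\theta^{\mp})$ can satisfy these conditions, because the vertex relation at $\theta^+$ requires $(n_1+n_2)-(m_1+m_2)=\pm 1$, which is incompatible with $n_1=n_2=0$ and $m_1=m_2$. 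Note that this argument rules out \emph{both} off-diagonal blocks for the chosen interpolating family; the $*$ in the statement is simply the weaker form that is needed (and all that survives once one compares arbitrary admissible interpolating families up to $\frP$-filtered chain homotopy, where transversality may force perturbations away from the special neck-length-only family). With that substitution your argument goes through and matches the paper's intended proof.
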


The proof is analogous to the proof of \cite{HFplusTQFT}*{Lem. 6.8}, using the techniques of Proposition \ref{lem:differentialcomp}. Hence as in \cite{HFplusTQFT}, we make the following definition:

\begin{define}We say that $N$ is \textbf{sufficiently large} for gluing data $\cJ$, if $\Phi_{\cJ(T)\to \cJ(T')}$ is of the form in the previous lemma for all $T,T'\ge N$. We say $T$ is \textbf{large enough to compute} $S^\pm_{w,z}$ for the gluing data $\cJ$ if $T> N$ for some $N$ which is sufficiently large.
\end{define}

Adapting the proofs of \cite{HFplusTQFT}*{Lem. 6.10-6.16}, we have the following:

\begin{lem}\label{lem:changeofalmostcomplexstructure}If $\cJ$ and $\bar{\cJ}$ are two choices of gluing data with almost complex structures $J_s$ and $\bar{J}_s$ on $\Sigma\times [0,1]\times \R$, respectively, then there is an $N$ such that if $T>N$, and $\cJ(T)$ and $\bar{\cJ}(T)$ achieve transversality, then
\[\Phi_{\cJ(T)\to \bar{\cJ}(T)}\simeq\begin{pmatrix}\Phi_{J_s\to \bar{J}_s}&*\\
0& \Phi_{J_s\to \bar{J}_s}
\end{pmatrix}.\]
\end{lem}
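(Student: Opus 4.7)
The plan is to run the neck-stretching analysis of Proposition \ref{lem:differentialcomp} one Maslov index lower, applied now to the dynamic moduli spaces defining $\Phi_{\cJ(T)\to\bar\cJ(T)}$. First I would interpolate $\cJ$ and $\bar\cJ$ by a generic one-parameter family $\cJ_t$ of gluing data; this produces a path $J_{s,t}$ on $\Sigma\times[0,1]\times\R$ and an induced path $\cJ_t(T)$ on $\bar\Sigma_p\times[0,1]\times\R$, and the transition map is defined by counting rigid $\cJ_t(T)$-dynamic holomorphic strips, i.e.\ classes $\phi$ with $\mu(\phi)=0$.

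Next I would take a sequence of such dynamic strips with $T_i\to\infty$ and extract a weak limit $U_\Sigma=U_\Sigma'\cup\cA$ on $(\Sigma,\ve\alpha^+,\ve\beta,\ve w,\ve z)$ together with $U_0$ on $\cH_0$, exactly as in the proof of Proposition \ref{lem:differentialcomp}. A standard index-additivity argument forces the limit to contain exactly one dynamic component and no static strip-breakings, since any static strip has $\mu\ge 1$ while the dynamic component has $\mu\ge 0$ and the total must equal $0$. The combinatorial Maslov index identity from Proposition \ref{lem:differentialcomp} then gives
\[0=\mu(\phi)=\mu(\phi_\Sigma')+n_1(\phi)+n_2(\phi)+m_1(\cA)+m_2(\cA)+2\!\!\sum_{\substack{\cD\in C(\Sigma\setminus\ve\alpha),\\ \alpha_s\cap\cD=\varnothing}}\!\!n_\cD(\cA),\]
with every summand on the right nonnegative; hence every summand vanishes. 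In particular $\cA$ is trivial, no new basepoint is crossed, and $\phi_\Sigma'$ is represented by a rigid dynamic disk on $(\Sigma,\ve\alpha,\ve\beta,\ve w,\ve z)$ for the path $J_{s,t}$.

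The matching analysis on the $\cH_0$ side proceeds exactly as in Proposition \ref{lem:differentialcomp}: the constraint $\rho^p(u_\Sigma)=\rho^{p_0}(u_0)=0$ together with $n_1(u_0)=n_2(u_0)=m_1(u_0)=m_2(u_0)=0$ forces the matched $\cH_0$-component to be a constant strip at $\theta^+$ or $\theta^-$, and any remaining $\cH_0$-components to be constant as well. A gluing argument of the form used in Proposition \ref{lem:differentialcomp} for prematched strips, but with the $\Sigma$-side now carrying the dynamic family $J_{s,t}$, identifies the resulting configurations, for $T$ sufficiently large, with the count defining the unstabilized transition map $\Phi_{J_s\to\bar J_s}$, contributing $\Phi_{J_s\to\bar J_s}$ to each diagonal block. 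The diagonal splitting and the vanishing of the lower-left entry follow directly from this case analysis; whatever remaining upper-right contributions might appear after gluing corrections are absorbed in the placeholder $*$.

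The main obstacle is uniform transversality across $t\in[0,1]$ for both the dynamic $\Sigma$-moduli spaces and for the Maslov-index-two cylindrical $\ve\alpha^+$-boundary degenerations that could in principle appear in the limit, together with the gluing estimate that produces a single threshold $N$ valid for all $T\ge N$. This is handled exactly as in \cite{HFplusTQFT}*{Lem.\ 6.10--6.16}, by perturbing the family $J_{s,t}$ and the $\ve\alpha^+$-curves independently of the $\cH_0$-data and using $\alpha$-injectivity of the relevant boundary degenerations, so that the threshold $N$ may be chosen uniformly in $t$.
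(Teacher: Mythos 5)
The strategy you propose---neck-stretching, weak-limit extraction, the combinatorial index identity from Proposition \ref{lem:differentialcomp}, and a prematched gluing argument---is the right one; the paper itself only records a citation to \cite{HFplusTQFT}*{Lem.\ 6.10--6.16}, so you are supplying more detail than the paper does. However, your treatment of the $\cH_0$ level contains a genuine misstep. You assert that ``the constraint $\rho^p(u_\Sigma)=\rho^{p_0}(u_0)=0$ together with $n_1(u_0)=n_2(u_0)=m_1(u_0)=m_2(u_0)=0$ forces the matched $\cH_0$-component to be constant.'' The index identity you invoke controls $n_1(\phi)$, $n_2(\phi)$, the boundary-degeneration multiplicities $m_i(\cA)$, and $\mu(\phi_\Sigma')$, but it says nothing about $n_p(\phi_\Sigma')$, the multiplicity of the rigid dynamic disk at the connected-sum point $p$. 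A generic rigid $J_{s,t}$-dynamic disk on $(\Sigma,\ve\alpha,\ve\beta)$ can cross $p$ with arbitrary nonnegative multiplicity $k$, and the matching condition then forces $m_1(u_0)=m_2(u_0)=k$, so the matched $\cH_0$ component is in general \emph{not} constant. This is exactly why the proof of Proposition \ref{lem:differentialcomp} does not stop at matching: it proceeds to show $\#\cM(\phi_0,\ve d)\equiv 1\pmod 2$ for every divisor $\ve d$, using a cobordism of divisors and the boundary-degeneration count of Theorem \ref{thm:boundarydegencount}. That step is needed here as well and should replace your constancy claim.

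Concretely, the fix is as follows: after establishing $\mu(\phi_\Sigma')=0$, $n_1=n_2=0$, and $\cA=\varnothing$, observe that the matched $\cH_0$ class $\phi_0$ has $n_1(\phi_0)=n_2(\phi_0)=0$ and $m_1(\phi_0)=m_2(\phi_0)=n_p(\phi_\Sigma')$, and then invoke (or rerun) the divisor-matching count $\#\cM(\phi_0,\ve d)\equiv 1\pmod 2$. When the path of gluing data also moves the $\cH_0$-side structure (for instance $J_{s,0}$, $\iota$, or $B_0$), that count must additionally be shown to be invariant along the family of $\cH_0$-data, which is another point your sketch glosses over; this is where the case-by-case structure of \cite{HFplusTQFT}*{Lem.\ 6.10--6.16} (varying one piece of gluing data at a time) is actually doing work. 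With those two repairs, the gluing step you cite does identify the prematched pairs with the $\cJ_t(T)$-curves for $T\gg 0$ and yields $\Phi_{J_s\to\bar J_s}$ on the diagonal; the vanishing of the lower-left entry follows from the index constraint $n_1=n_2=0$ together with the form of the homology classes in $\pi_2(\theta^+,\theta^-)$ on $(S^2,\alpha_0,\beta_0)$.
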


The previous lemma will be used to show that the quasi-stabilization maps are independent of the choice of gluing data.

\section{Quasi-stabilization and triangle maps} In this section we prove several results about quasi-stabilizing Heegaard triples, which we will use to prove invariance of the quasi-stabilization maps. The results for quasi-stabilization of Heegaard triples along a single $\alpha_s$ curve are established in \cite{MOIntSurg}, so we focus on quasi-stabilizing a Heegaard triple along two curves, $\alpha_s$ and $\beta_s$. To compute the quasi-stabilization maps $S_{w,z}^{\pm}$, we pick a curve $\alpha_s$ in the surface $\Sigma$, but there are many choices of such an $\alpha_s$ curve, so in order to address invariance of the quasi-stabilization maps, we need to show that the maps $S_{w,z}^{\pm}$ commute with the change of diagram map corresponding to moving $\alpha_s$ to $\alpha_s'$, which can be computed using a Heegaard triple which has been quasi-stabilized along two curves.  Our main result is Theorem \ref{thm:doublequasitriangle}, which is an analog to our computation of the differential after quasi-stabilizing in Proposition \ref{lem:differentialcomp}, but for Heegaard triples which we have quasi-stabilized along two curves which are allowed to travel throughout the diagram.

\subsection{Quasi-stabilizing Heegaard triples along a single curve}

We now consider Heegaard triples which are quasi-stabilized along a single $\alpha_s$ curve which is allowed to run through the diagram. This was first considered in \cite{MOIntSurg}. We state \cite{MOIntSurg}*{Prop. 5.2}, which considers the quasi-stabilized configuration shown in Figure \ref{fig::16}. The result will be useful in showing that the quasi-stabilization maps are invariant under $\ve{\beta}$-handleslides and $\ve{\beta}$-isotopies.

\begin{figure}[ht!]
\centering
\includegraphics[scale=1.2]{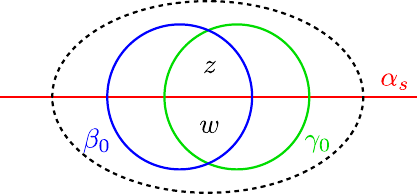}
\caption{The version of quasi-stabilization discussed in Lemma \ref{lem:singlealphaquasistabtriangle}, which is considered in \cite{MOIntSurg}*{Prop. 5.2}} \label{fig::16}
\end{figure}

\begin{lem}[\cite{MOIntSurg}*{Prop. 5.2}]\label{lem:singlealphaquasistabtriangle} Suppose that $\cT=(\Sigma,\ve{\alpha},\ve{\beta},\ve{\gamma},\ve{w},\ve{z})$ is a strongly $\frs$-admissible triple and suppose that $\alpha_s$ is a new $\ve{\alpha}$-curve, passing through the point $p\in \Sigma$. Let $\bar{\cT}_{\alpha_s,p}$ denote the Heegaard triple resulting from quasi-stabilizing along $\alpha_s$ at $p$, as in Figure \ref{fig::16}. If $\cJ$ is gluing data, then for sufficiently large $T$, with the almost complex structure $\cJ(T)$ we have the following identifications:

\[F_{\bar{\cT}_{\alpha_s,p},\bar{\frs}}(\ve{x}\times \cdot, \ve{y}\times y^+)=\begin{pmatrix}F_{\cT,\frs}(\ve{x},\ve{y})&0\\
0& F_{\cT,\frs}(\ve{x},\ve{y})
\end{pmatrix}\] where $\cdot\in \{x^+,x^-\}=\alpha_s\cap \beta_0$ and the matrix on the right denotes the expansion into the upper and lower generator components of $\alpha_s\cap \beta_0$ and $\alpha_s\cap\gamma_0$.
\end{lem}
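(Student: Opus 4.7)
The plan is to adapt the neck-stretching and Maslov-index bookkeeping from Proposition \ref{lem:differentialcomp} to the triangle map. Fix a homology class $\psi$ of triangles on $\bar{\cT}_{\alpha_s,p}$ with $\mu(\psi)=0$, and consider a sequence of holomorphic triangles $u_i$ representing $\psi$ for almost complex structures $\cJ(T_i)$ with $T_i\to\infty$. Extract a weak Gromov limit consisting of a collection $U_\Sigma$ of curves on $(\Sigma,\ve{\alpha}^+,\ve{\beta},\ve{\gamma})$ and a collection $U_0$ of curves on the small sphere $(S^2,\alpha_0,\beta_0,\gamma_0)$, joined by matching conditions at the puncture above $p$ and $p_0$.

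First, as in Proposition \ref{lem:differentialcomp}, I would use the doubling trick to argue that any component of $U_\Sigma$ whose boundary meets $\alpha_s$ but no $\ve{\beta}$- or $\ve{\gamma}$-curve must be a cylindrical $\ve{\alpha}^+$-boundary degeneration, and analogously for the $\ve{\beta}$- and $\ve{\gamma}$-sides. Thus $U_\Sigma$ splits as a genuine broken holomorphic triangle $U_\Sigma'$ on $(\Sigma,\ve{\alpha},\ve{\beta},\ve{\gamma})$ together with possibly some boundary degenerations and closed surface bubbles. Next I would write down a combinatorial Maslov index formula for the sphere piece analogous to Lemma \ref{lem:Maslovindexdisk}: for a triangle class $\psi_0$ on $(S^2,\alpha_0,\beta_0,\gamma_0)$ with multiplicities $n_1,n_2,m_1,m_2$ in the four regions adjacent to the four basepoints, $\mu(\psi_0)$ is a nonnegative combination of these multiplicities determined by the corners. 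Combining this with $\mu(U_\Sigma')$, the boundary-degeneration contributions, and subtracting the matching overlap at the neck, expresses $\mu(\psi)$ as a sum of nonnegative terms.

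Setting $\mu(\psi)=0$ forces $\mu(U_\Sigma')=0$, no $\ve{\alpha}^+$-, $\ve{\beta}$-, or $\ve{\gamma}$-boundary degenerations, and a sphere triangle supported only in the two regions adjacent to $w$ and $z$. It also forces the corners of $\psi_0$ at $\alpha_0\cap\beta_0$ and $\alpha_0\cap\gamma_0$ to share the same upper/lower label, which is exactly what produces the block diagonal form and rules out the off-diagonal entries in the matrix. For the diagonal entries I would count sphere triangles with a prescribed matching divisor $\rho^{p_0}$ exactly as in Proposition \ref{lem:differentialcomp}: deform the divisor towards the $\alpha_0$-boundary, apply Theorem \ref{thm:boundarydegencount} to the limiting configuration of $\alpha_0$-boundary degenerations, and conclude that the signed count is $1$ mod $2$. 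A standard gluing argument, adapted from the disk case of Proposition \ref{lem:differentialcomp} (cf.\ \cite{LCylindrical}*{App. A}) to the triangle case, then identifies for sufficiently large $T$ the count of $\cJ(T)$-holomorphic triangles in $\psi$ with the count of prematched pairs, yielding the stated formula.

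The main obstacle will be the gluing step for broken triangles matched across the neck, since the disk gluing used earlier needs to be extended while keeping track of the corner data at the three corners of each triangle. As this lemma is essentially \cite{MOIntSurg}*{Prop. 5.2}, the cleanest route is to invoke their argument and indicate only the translation into the cylindrical-boundary-degeneration language used throughout this paper.
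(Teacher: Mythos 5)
The paper does not prove this lemma: it is stated verbatim as a citation to Manolescu--Ozsv\'{a}th \cite{MOIntSurg}*{Prop.\ 5.2}, and the neck-stretching/boundary-degeneration machinery developed in Proposition \ref{lem:differentialcomp} and Theorem \ref{thm:doublequasitriangle} is deployed only for the (strictly harder) \emph{double} quasi-stabilization along $\alpha_s$ and $\beta_s$, not for this single-$\alpha_s$ case. Your closing paragraph, recommending that one invoke the reference rather than reprove the statement, therefore matches what the paper actually does.

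The first three paragraphs of your sketch, which outline a direct cylindrical proof, are a reasonable alternative route and do follow the template that the paper uses for Theorem \ref{thm:doublequasitriangle}. Two remarks if you were to flesh this out. First, the single-$\alpha_s$ case is genuinely simpler than the two-curve case: because only one special curve $\alpha_s$ is added, the degenerate $\alpha\beta$ piece of the weak limit never forms a nontrivial subdiagram $(\Sigma,\ve{\alpha}\cup\{\alpha_s\},\ve{\beta}\cup\{\beta_s\})$, so there is no need for a strong-positivity hypothesis like the one in Theorem \ref{thm:doublequasitriangle}, and you should say so rather than silently dropping it. Second, the ``forces the corners to share the same upper/lower label'' step is where the real combinatorics lives: you need to run the vertex-multiplicity relations on the small sphere with the $\beta\gamma$-input pinned to $y^+$ (the analogue of the $A+B=1$ analysis at $p_0^+$ in the paper's proof of Theorem \ref{thm:doublequasitriangle}), and the choice of which boundary of the triangle $\Delta$ you push the matching divisor toward when invoking Theorem \ref{thm:boundarydegencount} has to be made consistent with that pinning. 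These are exactly the details that Manolescu--Ozsv\'{a}th's proof handles, which is why citing is the efficient choice here.
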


\subsection{Strongly positive diagrams for \texorpdfstring{$(S^1\times S^2)^{\#k }$}{connected sums of S\^{} 1 times S\^{} 2}}

In this subsection we describe a class of simple diagrams for $(S^1\times S^2)^{\# k}$ which we will use in a technical condition in Theorem \ref{thm:doublequasitriangle} for quasi-stabilizing Heegaard triples along two curves, $\alpha_s$ and $\beta_s$, passing through a Heegaard triple.

Suppose that $(\Sigma, \ve{\alpha},\ve{\beta},\ve{w})$ is a diagram for $(S^1\times S^2)^{\# k}$ such that 
\[|\alpha_i\cap \beta_j|=\begin{cases}
1 \text{ or } 2& \text{if } i=j\\
0 & \text{ if } i\neq j\end{cases}\] and that if $\alpha_i\cap \beta_i=\{p_i^-,p_i^+\}$, then $p_i^-$ and $p_i^+$ differ by Maslov grading one\footnote{More precisely we mean that $\ve{x}\times p_i^+$ and $\ve{x}\times p_i^-$ differ by Maslov grading 1 for any $\ve{x}$.}. We do not assume that the $\alpha_i$ curves are small isotopies of the $\beta_j$ curves. Let $\theta^+=p_1^+\times \cdots \times p_{n-1}^+$ denote the top graded (partial) intersection point. Assume that $|\alpha_n\cap \beta_n|=2$ and write $p_n^-$ and $p_n^+$ for the two points of $\alpha_n\cap \beta_n$.

\begin{define}\label{def:stronglypositive}Under the assumptions of the previous paragraph, we say that $(\Sigma, \ve{\alpha},\ve{\beta},\ve{w})$ is \textbf{strongly positive} with respect to $p_n^+$ if for every nonnegative disk $\phi\in \pi_2(\theta^+\times p_n^+,\ve{y}\times p_n^+)$ we have that
\[(\mu-(m_1+m_2))(\phi)=(\mu-(n_1+n_2))(\phi)\ge 0,\] with equality to zero iff $\phi$ is the constant disk. Here $m_1,m_2,n_1,n_2$ denote the multiplicities adjacent to the point $p_n^+$, appearing in the following counterclockwise order: $n_1,m_1,n_2,$ then $m_2$.
\end{define}

Note that  $m_1+m_2=n_1+n_2$ for any disk $\phi\in \pi_2(\ve{x}\times p_n^+, \ve{y}\times p_n^+)$, by the vertex relations.

\begin{figure}[ht!]
\centering
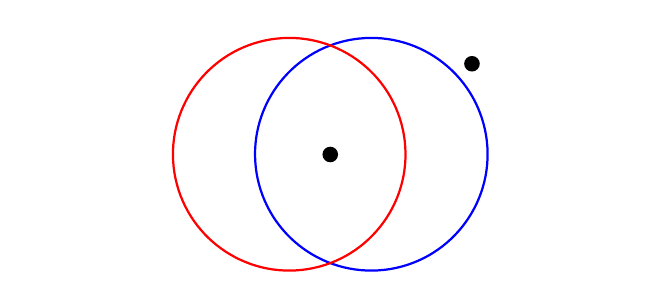
\caption{The diagram $\cH_0=(S^2,\alpha_0,\beta_0,w,w')$ in Lemma \ref{lem:stronglypositive} which is strongly positive at $p_0^+$, as well as the multiplicities $m_1,n_1,m_2$ and $n_2$.\label{fig::48}}
\end{figure}

We now describe a class of diagrams which are strongly positive, which will be sufficient for our purposes:

\begin{lem}\label{lem:stronglypositive}The diagram $\cH_0=(S^2,\alpha_0,\beta_0,w,w')$ in Figure \ref{fig::48} is strongly positive with respect to $p_0^+$, the intersection point of $\alpha_0$ and $\beta_0$ with higher relative grading. If $\cH=(\Sigma, \ve{\alpha},\ve{\beta},\ve{w})$ is a diagram with a distinguished intersection point $p_n^+\in \alpha_n\cap \beta_n$ where $|\alpha_n\cap \beta_n|=2$, and $\cH'=(\Sigma',\ve{\alpha}',\ve{\beta}',\ve{w}')$ is the result of any of the following moves, then $\cH'$ is strongly positive with respect to $p_n^+$ iff $\cH$ is strongly positive with respect to $p_n^+$:
\begin{enumerate}
\item \label{list:strongmove1} $(1,2)$-stabilization\footnote{If $\cH=(\Sigma, \ve{\alpha},\ve{\beta},\ve{w})$ is a Heegaard diagram for $Y$, a $(1,2)$-stabilization of $\cH$ is obtained by taking an embedded torus $\bT^2$ inside of a 3-ball in $Y\setminus \Sigma$, together with curves $\alpha_0$ and $\beta_0$ with $|\alpha_0\cap \beta_0|=1$, which bound compressing disks with boundary on $\Sigma$, and letting $\cH'$ be the diagram $(\Sigma\# \bT^2,\ve{\alpha}\cup \{\alpha_0\}, \ve{\beta}\cup \{\beta_0\}, \ve{w})$, where $\Sigma\# \bT^2$ is the internal connected sum.};
\item \label{list:strongmove2}taking the disjoint union of $\cH$ with the standard diagram $(\bT^2,\alpha_0,\beta_0,w)$ for $(S^3,w)$.
\item \label{list:strongmove3}performing surgery on an embedded 0-sphere $\{q_1,q_2\}\subset \Sigma\setminus (\ve{\alpha}\cup \ve{\beta})$ by removing small disks from $\Sigma$, and connecting the resulting boundary components with an annulus with new $\alpha_0$ and $\beta_0$ curves with $|\alpha_0\cap \beta_0|=2$, and which are isotopic to each other, and homotopically nontrivial in the annulus;
\end{enumerate}
\end{lem}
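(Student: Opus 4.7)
The plan is to prove the first claim directly from Lemma \ref{lem:Maslovindexdisk} together with the vertex relation at $p_0^+$, and to verify each of the three moves preserves strong positivity by decomposing nonnegative disks on $\cH'$ into a nonnegative disk on $\cH$ plus a domain supported in the newly added portion of the Heegaard surface, while tracking both the Maslov index and the multiplicities $m_1,m_2$ at $p_n^+$.

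For the first claim, Lemma \ref{lem:Maslovindexdisk} gives $\mu(\phi)=(n_1+n_2+m_1+m_2)(\phi)$ for any disk $\phi$ on $\cH_0$, so nonnegativity yields $(\mu-(m_1+m_2))(\phi)=(n_1+n_2)(\phi)\ge 0$. In the equality case $n_1=n_2=0$, and the vertex relation $n_1+n_2=m_1+m_2$ noted after Definition \ref{def:stronglypositive} forces $m_1=m_2=0$ as well. Since these are the only four regions of $S^2\setminus(\alpha_0\cup\beta_0)$, we obtain $\cD(\phi)=0$, so $\phi$ is the constant disk.

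For moves (\ref{list:strongmove1}) and (\ref{list:strongmove2}), the new curves $\alpha_0,\beta_0$ meet in a single point $x_0$, and the added handle or $S^3$ component carries no nontrivial nonnegative periodic domain with both endpoints at $x_0$. Consequently a nonnegative disk $\phi'$ on $\cH'$ in $\pi_2(\theta^+\times x_0\times p_n^+,\ve{y}\times x_0\times p_n^+)$ has vanishing multiplicity in the newly introduced region, and is tautologically identified with a nonnegative disk $\phi$ on $\cH$ in $\pi_2(\theta^+\times p_n^+,\ve{y}\times p_n^+)$ having the same Maslov index and the same local multiplicities $m_1,m_2$. The strong positivity condition is thus transferred in both directions.

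Move (\ref{list:strongmove3}) is the main obstacle, as the newly introduced annulus $A$ carries $\alpha_0,\beta_0$ with $|\alpha_0\cap\beta_0|=2$ and supports its own nontrivial periodic domains. The plan is to decompose any nonnegative disk $\phi'$ on $\cH'$ in $\pi_2(\theta^+\times x_0^+\times p_n^+,\ve{y}\times x_0^{\epsilon}\times p_n^+)$ canonically as $\phi'=\phi+D$, where $\phi$ is a nonnegative disk on $\cH$ and $D$ is a nonnegative domain on $A$ from $x_0^+$ to $x_0^{\epsilon}$. Since $A$ is disjoint from $p_n^+$, the multiplicities $m_1,m_2$ are those of $\phi$, and the Maslov indices add. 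The key computation is a direct case analysis of the four regions of $A\setminus(\alpha_0\cup\beta_0)$, showing that every nonnegative $D$ satisfies $\mu(D)\ge 0$ with equality iff $D$ is the constant domain at $x_0^+$. Consequently $(\mu-(m_1+m_2))(\phi')=(\mu-(m_1+m_2))(\phi)+\mu(D)\ge 0$, with equality iff both summands vanish, iff $\phi'$ is constant, which gives the forward direction. The converse is immediate since any nonnegative disk $\phi$ on $\cH$ lifts to $\phi'=\phi$ on $\cH'$ with $D=0$, so a failure of strong positivity on $\cH$ transfers to $\cH'$.
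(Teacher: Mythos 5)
Your treatment of $\cH_0$ is correct and is the same as the paper's, but the way you handle the new portion of the surface in moves (2) and (3) contains genuine errors. In both cases you assume that a nonnegative disk on $\cH'$ must have trivial content in the added piece, and this is false. For move (2), Definition \ref{def:stronglypositive} places no constraint on multiplicities at basepoints, so the disjoint summand $(\bT^2,\alpha_0,\beta_0,w)$ \emph{does} carry nontrivial nonnegative classes in $\pi_2(x_0,x_0)$, namely $k\cdot[\bT^2]$ for $k>0$; a nonnegative class on $\cH'$ has the form $\phi\sqcup k[\bT^2]$, and one must compute $\mu(\phi\sqcup k[\bT^2])=\mu(\phi)+2k$ and observe that the extra $2k\ge 0$ is harmless (this is exactly what the paper does), rather than assert that such classes do not exist. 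For move (1), the claim of ``vanishing multiplicity in the newly introduced region'' is also not right: the single region of the new torus is merged with an old region of $\cH$, so the corresponding disk on $\cH'$ inherits that region's multiplicity; what makes the move work is the standard fact that the bijection of homology classes under $(1,2)$-stabilization preserves $\mu$ and the local multiplicities at $p_n^+$, which is the statement the paper invokes.

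The more serious gap is in move (3). A ``domain $D$ supported on the annulus'' is not well defined inside $\cH'$, because the two annular regions adjacent to the surgery circles are merged with regions of $\cH$; the only domains of $\cH'$ supported in the annulus are combinations of the two bigons. If instead you cut along the gluing circles, your two key claims cannot both hold: the natural extension $\iota_{\theta_0^+}(\phi)$ of a class $\phi$ with positive multiplicity in a surgered region satisfies $\mu(\iota_{\theta_0^+}(\phi))=\mu(\phi)$, so its nonconstant, nonnegative annulus piece would have to contribute $0$ to any additive index, contradicting your dichotomy ``$\mu(D)\ge 0$ with equality iff $D$ is constant.'' Your converse direction is also broken as stated: taking ``$\phi'=\phi$ with $D=0$'' does not give a domain of $\cH'$ unless $\phi$ vanishes on the two surgered regions. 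The paper avoids all of this by parametrizing every class on $\cH'$ as $\iota_y(\phi)+n\cP$ with $y\in\{\theta_0^+,\theta_0^-\}$, where $\cP$ is the bigon-difference periodic domain with $\mu(\cP)=m_1(\cP)=m_2(\cP)=0$, and by checking that $(\mu-(m_1+m_2))$ is preserved by $\iota_{\theta_0^+}$ and increased by $1$ by $\iota_{\theta_0^-}$; nonnegativity of $\phi'$ gives nonnegativity of the underlying $\phi$ by restriction, and in the equality case the annulus content is forced to be a nonnegative combination of bigons, each of index one, so $\phi'$ is constant. Your argument needs to be repaired along these lines before it establishes the lemma.
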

\begin{proof} We first note that $\cH_0$ is strongly positive with respect to $p_0^+$, because the Maslov index of any disk is given by
\[\mu(\phi)=(m_1+m_2+n_1+n_2)(\phi),\] by Lemma \ref{lem:Maslovindexdisk}, where $m_1,m_2,n_1,n_2$ are multiplicities appearing in the counterclockwise order $m_1,n_1,m_2,n_2$ around $p_0^+$, as in Figure \ref{fig::48}. Hence for any disk $\phi$, we have
\[(\mu-(n_1+n_2))(\phi)=(m_1+m_2)(\phi),\] which is certainly nonnegative. For a disk $\phi\in \pi_2(p_0^+,p_0^+)$ we also have $(m_1+m_2)(\phi)=(n_1+n_2)(\phi)$, so the above quantity is positive iff $\phi$ is has positive multiplicities.

We now address moves (1)-(3).

We first consider move \eqref{list:strongmove1}. If $\cH$ is a diagram, and $\cH'$ is the result of $(1,2)$-stabilization, we note that there is an isomorphism
\[ \sigma_*:\pi_2^{\cH}(\ve{x}\times p_n^+,\ve{y}\times p_n^+)\to \pi_2^{\cH'}(\ve{x}\times c\times p_n^+, \ve{y}\times c\times p_n^+),\] where $c$ is the intersection of the new $\ve{\alpha}$- and $\ve{\beta}$-curves. Furthermore we note that 
\[(\mu-(n_1+n_2))(\phi)=(\mu-(n_1+n_2))(\sigma_*\phi),\]
 from which the claim follows easily.

We now consider move \eqref{list:strongmove2}. Suppose $\cH'$ is formed from $\cH$ by taking the disjoint union of $\cH$ with a diagram 
$(\bT^2,\alpha_0,\beta_0,w)$. Homology classes on $\cH'$ are of the form $\phi\sqcup k\cdot [\bT^2]$, where $\phi$ is a homology disk on $\cH$. One has
\[\mu(\phi\sqcup k\cdot[\bT^2])=\mu(\phi)+2k,\] from which the claim follows easily.

Finally we consider move \eqref{list:strongmove3}, corresponding to surgering on an embedded 0-sphere $\{q_1,q_2\}\subset \Sigma\setminus (\ve{\alpha}\cup \ve{\beta}\cup \ve{w})$. Write $\alpha_0$ and $\beta_0$ for the new curves on the annulus, and $\{\theta_0^+,\theta_0^-\}=\alpha_0\cap \beta_0$. Suppose that $y\in \alpha_0\cap \beta_0$ is a choice of intersection point. We can define an injection
\[\iota_{y}:\pi_2^{\cH}(\theta^+\times p_n^+,\ve{y}\times p_n^+)\to \pi_2^{\cH'}(\theta^+\times \theta_0^+\times p_n^+, \ve{y}\times y\times p_n^+) .\] For $y=\theta_0^+$, we define $\iota_{\theta^+_0}(\phi)$ to be the disk on the surgered diagram which has no change across the curve $\beta_0$, but which agrees with the disk $\phi$ away from the $\alpha_0$ and $\beta_0$ curves. For $y=\theta_0^-$, a map $\iota_{\theta_0^-}$ can be defined by defining it to be the the map $\iota_{\theta_0^+}$, defined above, composed with the map on disks obtained by splicing in a choice of one of the thin strips from $\theta_0^+$ to $\theta_0^-$. An easy computation shows that
\[(\mu-(m_1+m_2))(\iota_{\theta_0^+}(\phi))=(\mu-(m_1+m_2))(\phi)\] while
\[(\mu-(m_1+m_2))(\iota_{\theta_0^-}(\phi))=(\mu-(m_1+m_2))(\phi)+1.\] Any disk in $\pi_2(\theta^+\times \theta_0^+\times p_n^+, \ve{y}\times y\times p_n^+)$ is equal to one which is in the image of $\iota_y$, with $n\cdot \cP$ spliced in, where $\cP$ is the periodic domain which is $+1$ in one of the small strips between $\alpha_0$ and $\beta_0$, and $-1$ in the other. We note that 
\[\mu(\cP)=m_1(\cP)=m_2(\cP)=0.\] From these observations it follows easily that $\cH$ is strongly positive with respect to $p_n^+$ iff $\cH'$ is.
\end{proof}

\begin{rem}\label{rem:stronglypositivediagrams}Suppose $\cH=(\Sigma, \ve{\alpha}',\ve{\alpha},\ve{w})$ is obtained by taking attaching curves $\ve{\alpha}$ and letting $\ve{\alpha}'$ be small Hamiltonian isotopies of the curves in $\ve{\alpha}$. Let $\alpha_s$ be a new curve in $\Sigma\setminus \ve{\alpha}$ which doesn't intersect any $\ve{\alpha}'$-curves. If $\alpha_s'$ is the result of handlesliding $\alpha_s$ across a curve in $\ve{\alpha}$, then $(\Sigma, \ve{\alpha}'\cup \{\alpha_s'\}, \ve{\alpha}\cup \{\alpha_s\},\ve{w}\cup \{w\})$ is strongly positive at $p^+$, the intersection point of $\alpha_s'\cap \alpha_s$ with higher grading. Here $w$ is a new basepoint  in one of the regions adjacent to $p^+$.

Similarly if $\cH=(\Sigma, \ve{\alpha}',\ve{\alpha},\ve{w})$ is the result of handlesliding a curve in $\ve{\alpha}$ across another curve in $\ve{\alpha}$, and $\alpha_s$ and $\alpha_s'$ are two new curves which are Hamiltonian isotopies of each other, then $(\Sigma, \ve{\alpha}'\cup \{\alpha_s'\},\ve{\alpha}\cup \{\alpha_s\},\ve{w}\cup \{w\})$ is strongly positive with respect to the intersection point of $\alpha_s'\cap \alpha_s$ of higher relative grading.

Note that a diagram $(\Sigma_g, \ve{\alpha}',\ve{\alpha},w)$ where the curves in $\ve{\alpha}'$ are small isotopies of the curves in $\ve{\alpha}$ with $g(\Sigma_g)=|\ve{\alpha}'|=|\ve{\alpha}|=g$, is not a strongly positive diagram at any point, since $[\Sigma_g]$ represents a positive homology class in $\pi_2(\theta^+,\theta^+)$ with $\mu([\Sigma_g])-m_1-m_2=2-1-1=0$. Strongly positive diagrams always have multiple basepoints. The prototypical example is the one resulting from handlesliding a quasi-stabilization curve $\alpha_s$ across an $\ve{\alpha}$-curve, as in Figure \ref{fig::3}.
\end{rem}

\begin{figure}[ht!]
\centering
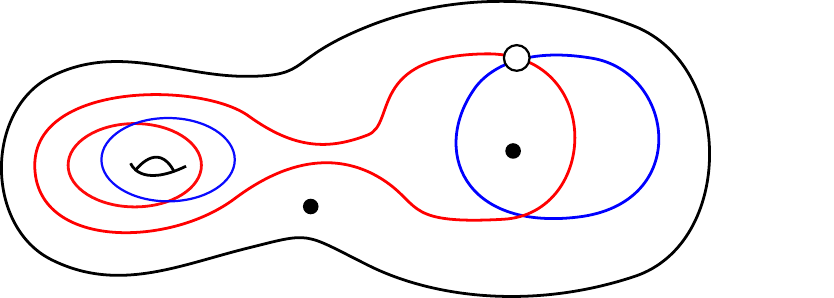
\caption{A diagram $(\Sigma, \ve{\alpha},\ve{\beta})$ with two basepoints which is strongly positive with respect to the point $p^+$. The curves $\alpha_s$ and $\beta_s$ are curves on which one could perform the quasi-stabilization operation of triangles in Theorem \ref{thm:doublequasitriangle}. \label{fig::3}}
\end{figure}

\subsection{Quasi-stabilizing Heegaard triples along two curves}

We now consider the effect on the triangle maps of quasi-stabilizating along two curves. Our analysis follows a similar spirit to the proof of \cite{MOIntSurg}*{Prop. 5.2}. Suppose that $(\Sigma, \ve{\alpha},\ve{\beta},\ve{\gamma},\ve{w},\ve{z})$ is a Heegaard triple with a distinguished point $p^+\in \Sigma\setminus (\ve{\alpha}\cup \ve{\beta}\cup \ve{\gamma}\cup \ve{w}\cup \ve{z})$. Suppose also that $\alpha_s$ and $\beta_s$ are two choices of curves in $\Sigma\setminus (\ve{\alpha}\cup \ve{w}\cup \ve{z})$ and $(\Sigma\setminus (\ve{\beta}\cup \ve{w}\cup \ve{z})$ respectively, which intersect only at $p^+$, and another point $p^-\in \Sigma$. We can form the diagram $\bar{\cT}_{\alpha_s,\beta_s,p^+}$, obtained by quasi-stabilizing along both $\alpha_s$ and $\beta_s$, simultaneously, at the point $p^+$. This corresponds to removing a small disk containing $p^+$, and inserting the diagram shown in Figure \ref{fig::46}, with a disk centered around $p_0^-$ removed.

\begin{figure}[ht!]
\centering
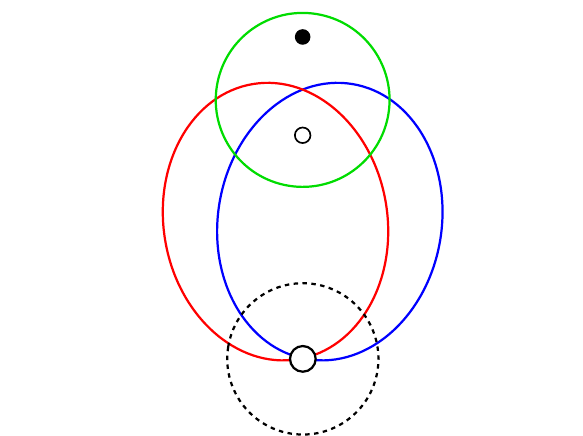
\caption{The diagram we insert into a Heegaard triple diagram $\cT$ along the curves $\alpha_s$ and $\beta_s$ to form the diagarm $\bar{\cT}_{\alpha_s,\beta_s,p^+}$. We cut out the solid circle marked with $p_0^-$ and stretch the almost complex structure along the dashed circle. \label{fig::46}}
\end{figure}

\begin{thm}\label{thm:doublequasitriangle}Suppose that $\cT=(\Sigma, \ve{\alpha},\ve{\beta},\ve{\gamma},\ve{w},\ve{z})$ is a Heegaard triple with curves $\alpha_s$ and $\beta_s$, intersecting at two points $p^+$ and $p^-$ and let $\bar{\cT}_{\alpha_s,\beta_s,p^+}$ be the Heegaard triple result from quasi-stabilization, as described above. If $(\Sigma,\ve{\alpha}\cup \{\alpha_s\},\ve{\beta}\cup \{\beta_s\},\ve{w}\cup \{w\})$ is a strongly positive diagram for $(S^1\times S^2)^{\# k}$ with respect to the point $p^+$ (Definition \ref{def:stronglypositive}), and $\cJ$ is gluing data for stretching along the dashed circle, then for sufficiently large $T$, with respect to the almost complex structure $\cJ(T)$, there are identifications
\[F_{\cT^+_{\alpha_s,\beta_s,p^+},\bar{\frs},\cJ(T)}(\Theta^+_{\alpha\beta}\times p_0^+,\ve{x}\times \cdot)=\begin{pmatrix}F_{\cT,\frs,J_s}(\Theta_{\alpha\beta}^+,\ve{x})&0\\0& F_{\cT,\frs,J_s}(\Theta_{\alpha\beta}^+,\ve{x})
\end{pmatrix},\] where $\cdot$ denotes $x^\pm\in \beta_0\cap \gamma_0$ and the matrix on the right denotes the matrix decomposition of the map based on the decompositions given $x^{\pm}$ and $y^{\pm}$.
\end{thm}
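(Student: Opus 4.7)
The plan is to adapt the neck-stretching and degeneration analysis of Proposition \ref{lem:differentialcomp} from the strip setting to the triangle setting. Fix gluing data $\cJ$ and consider a sequence of Maslov-index-zero $\cJ(T_i)$-holomorphic triangles $u_i$ on $\bar{\cT}_{\alpha_s,\beta_s,p^+}$ in a fixed homology class $\psi$, with $T_i \to \infty$. Taking a Gromov limit and rescaling trivial components as in Proposition \ref{lem:differentialcomp} produces a collection $U_\Sigma$ of curves on the big diagram $(\Sigma,\ve{\alpha}\cup\{\alpha_s\},\ve{\beta}\cup\{\beta_s\},\ve{\gamma},\ve{w},\ve{z})$, possibly with asymptotic punctures at $p^+$, together with a collection $U_0$ of curves on the small diagram of Figure \ref{fig::46}, subject to the standard matching conditions at the neck.

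The crucial step is the Maslov-index bookkeeping on the big side. By the open mapping argument used in Proposition \ref{lem:differentialcomp}, any component of $U_\Sigma$ with boundary on $\alpha_s$ or $\beta_s$ must be a cylindrical $\alpha_s$- or $\beta_s$-boundary degeneration asymptotic to $p^+$. Let $U_\Sigma'$ be the part of $U_\Sigma$ having no boundary on $\alpha_s\cup \beta_s$, with underlying homology class $\psi'$, and let $\cA$ collect the boundary-degeneration components. A combinatorial Maslov-index computation analogous to that of Proposition \ref{lem:differentialcomp} yields
\[\mu(\psi)=\mu(\psi')+(m_1+m_2)(\cA)+2\!\!\!\sum_{\cD\not\ni p^+}\!\!\!n_\cD(\cA)+n_w(\psi)+n_z(\psi),\]
where the sum runs over regions $\cD$ of $\Sigma\setminus (\ve{\alpha}\cup\{\alpha_s\})$ and $\Sigma\setminus (\ve{\beta}\cup\{\beta_s\})$ not containing $p^+$. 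The strongly positive hypothesis is precisely what guarantees $\mu(\psi')\ge 0$, with equality iff $\psi'$ is the constant Maslov-index-zero class; it is the combinatorial input needed to exclude closed-surface bubbles and positive-multiplicity disk components on the big side, which could in principle appear in the triangle setting but did not in the strip setting of Proposition \ref{lem:differentialcomp}.

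Since $\mu(\psi)=0$ and each summand is nonnegative, all summands vanish, so $\cA=\varnothing$, $n_w(\psi)=n_z(\psi)=0$, and $U_\Sigma'$ is a single Maslov-index-zero holomorphic triangle on $(\Sigma,\ve{\alpha},\ve{\beta},\ve{\gamma},\ve{w},\ve{z})$ with inputs $\Theta_{\alpha\beta}^+$ and $\ve{x}$ and output some $\ve{y}$. The matched component $u_0$ of $U_0$ on the small diagram is then a Maslov-index-zero triangle with inputs $(p_0^+, x^{\pm})$, output $x^{\pm}$, vanishing multiplicities at $w$ and $z$, and $\rho^{p_0^-}(u_0)=\rho^{p^+}(U_\Sigma')$. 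Exactly as in Proposition \ref{lem:differentialcomp}, a path-of-divisors argument combined with Theorem \ref{thm:boundarydegencount} shows that for any admissible divisor the mod-$2$ count of such small-diagram triangles is $1$ in the classes $x^+\to x^+$ and $x^-\to x^-$, while classes swapping $x^+\leftrightarrow x^-$ would force nonzero multiplicity at $w$ or $z$ and are therefore excluded. A standard gluing argument following \cite{LCylindrical} then identifies, for sufficiently large $T$, the count of rigid $\cJ(T)$-holomorphic triangles in the class $\psi$ with the count of matched pairs $(U_\Sigma', u_0)$, yielding the claimed diagonal matrix identification. The main obstacle is the big-side degeneration analysis: ruling out the extra configurations possible in the triangle setting is exactly what the strongly positive hypothesis is engineered to handle, and the essential geometric input of the proof is concentrated in that step.
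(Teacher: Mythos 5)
Your proposal misses the decisive structural difference between the triangle setting and the strip setting of Proposition \ref{lem:differentialcomp}, and as a result the place where strong positivity actually enters is misidentified. In the strip case, the argument that any component of $U_\Sigma$ with boundary on $\alpha_s$ must be a boundary degeneration works because $\alpha_s$ meets no $\ve{\beta}$-curve on $\Sigma$: the only curve it crosses is $\beta_0$, which lives on the other side of the neck. In the triangle case this breaks down, because $\beta_s$ lives on $\Sigma$ and meets $\alpha_s$ at $p^+$ and $p^-$. A component of $U_\Sigma$ with boundary on $\alpha_s$ can therefore also have boundary on $\beta_s$ and on other $\ve{\alpha}$- and $\ve{\beta}$-curves; such a component is a genuine holomorphic disk for the pair $(\ve{\alpha}\cup\{\alpha_s\},\ve{\beta}\cup\{\beta_s\})$ bubbling off at the $\alpha\beta$-corner of the triangle, not a cylindrical boundary degeneration. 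The paper's proof keeps this as a third piece of the decomposition, a disk $u_{\alpha\beta}$ with homology class $\phi_{\alpha\beta}\in\pi_2(\Theta^+_{\alpha\beta}\times p^+,\ve{y}\times p^+)$, in addition to the genuine triangle $u_\Sigma$ on $(\Sigma,\ve{\alpha},\ve{\beta},\ve{\gamma})$ and the small-diagram triangle $u_0$. Your two-piece decomposition simply does not have a slot for it.

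This also means your Maslov-index formula is not the right one, and your claim that strong positivity guarantees $\mu(\psi')\ge 0$ misattributes its role. For the triangle class $\psi'$ on $(\Sigma,\ve{\alpha},\ve{\beta},\ve{\gamma})$, nonnegativity of the Maslov index of a broken holomorphic representative is automatic from transversality and has nothing to do with strong positivity. What strong positivity actually controls, per Definition \ref{def:stronglypositive}, is the quantity $\mu(\phi_{\alpha\beta})-\tfrac{1}{2}(m_1+m_2+n_1+n_2)(\phi_{\alpha\beta})$ for the $\alpha\beta$-bubble, and the correct Maslov-index decomposition for the triangle $\psi$ has exactly this term as a nonnegative summand, forcing $\phi_{\alpha\beta}$ to be constant when $\mu(\psi)=0$. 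Without the $u_{\alpha\beta}$ piece and the corresponding term in the index formula, the hypothesis is never actually used and the $\alpha\beta$-cusp degenerations are never ruled out, so the argument has a genuine gap. The rest of your outline (the multiplicity constraints forcing $\psi_0$ to be diagonal, the path-of-divisors count, and the gluing) tracks the paper once this is repaired.
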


As usual, the argument proceeds by a Maslov index calculation, which we use to put constraints on the homology classes of holomorphic curves which can appear in a weak limit as we let the parameter $T$ approach $+\infty$. Once we determine which homology classes of triangles can appear, we can use gluing results to explicitly count holomorphic curves.

\begin{figure}[ht!]
\centering
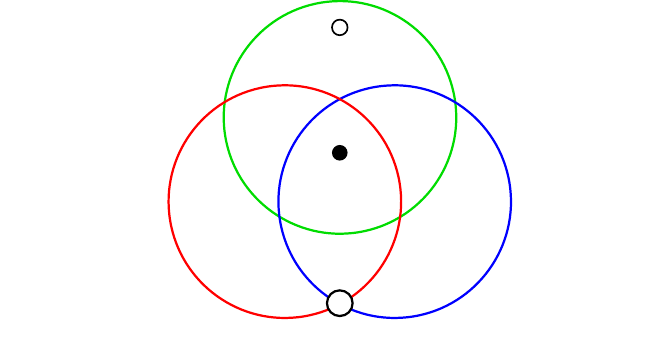
\caption{Multiplicities for a triangle on the diagram $(S^2,\alpha_0,\beta_0,\gamma_0,w,z)$.\label{fig::47}}
\end{figure}

\begin{lem}\label{lem:Maslovindextrianglediagram} Suppose $\psi\in \pi_2(x,y,z)$ is a homology disk on $(S^2,\alpha_0,\beta_0,\gamma_0,w,z)$, shown in Figure \ref{fig::47}. Then
\[\mu(\psi)=n_1+n_2+N_1+N_2.\]
\end{lem}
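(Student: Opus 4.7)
The plan is to apply the general Maslov-index formula of Sarkar for Whitney triangles, namely
\[\mu(\psi) = e(\psi) + n_{x}(\psi) + n_{y}(\psi) + n_{z}(\psi),\]
where $e(\psi)$ denotes the Euler measure of the domain $\cD(\psi)$ and each $n_{\bullet}(\psi)$ denotes the average of the multiplicities of the four regions meeting at the vertex $\bullet$. The strategy is then to read off these quantities directly from the specific small diagram $(S^2,\alpha_0,\beta_0,\gamma_0,w,z)$ shown in Figure~\ref{fig::47}.

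In this diagram every region is a bigon or triangle, so the Euler measures are easy to enumerate, and after summing over all regions of $\cD(\psi)$ weighted by multiplicity and using the vertex balance relations at the interior intersection points $p_0^{\pm}$, the Euler contribution simplifies. The three point measures $n_x, n_y, n_z$ are then straightforward averages of the labeled multiplicities at the corners. The vertex relations at $p_0^{\pm}$ force pairs like $m_1+m_2$ to be expressible in terms of $n_1+n_2$, so the $m_i$, $A$, and $B$ contributions cancel in the final sum, leaving $n_1+n_2+N_1+N_2$ as claimed.

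Alternatively, mirroring the template of Lemma~\ref{lem:Maslovindexdisk}, one can verify the identity on a single canonical ``small'' triangular representative of $\pi_2(x,y,z)$ whose domain is a single triangular region at $p_0^+$ (where both sides can be read off by inspection), and then observe that both sides of the identity behave additively under splicing in Maslov-index-$1$ bigons between any pair of the intersection points, under adding doubly-periodic domains from each pair of the curves, and under adding a copy of $[S^2]$. The main obstacle is organizing the case analysis across the several small bigons joining the various intersection points of the three pairs of curves, but since the diagram is compact and explicit, this verification is routine.
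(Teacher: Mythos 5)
Your primary route is built on the claimed formula $\mu(\psi) = e(\psi) + n_x(\psi) + n_y(\psi) + n_z(\psi)$, but this is not Sarkar's triangle index formula and it is in fact false. For a canonical convex small triangle one reads off $e=1/4$ and $n_x=n_y=n_z=1/4$ (at each vertex one of the four corner regions has multiplicity one, the other three zero), giving a right-hand side of $1$, whereas the Maslov index of a small triangle is $0$. The correct formula, which the paper itself quotes later in the proof of Theorem~\ref{thm:doublequasitriangle}, is
\[\mu(\psi)=e(\cD)+n_{x}(\cD)+n_{y}(\cD)+a(\cD).c(\cD)-\tfrac{d}{2},\]
with an intersection-pairing correction term in place of a third point measure. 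The cancellation of the $m_i$, $A$, $B$ multiplicities that you assert would have to be re-derived against this formula, and you have not done that, so the primary plan has a genuine gap.

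Your alternative route, by contrast, is exactly the paper's proof: check the identity on the Maslov-index-zero small triangles (where both sides vanish) and note that both sides are additive under splicing in Maslov-index-$1$ strips; since any two homology triangles on this genus-zero diagram differ by splicing in such strips, the identity propagates to all of $\pi_2(x,y,z)$. The extra care you take over periodic domains and $[S^2]$ is harmless but redundant here, since on $(S^2,\alpha_0,\beta_0,\gamma_0)$ those classes are themselves sums of the Maslov-index-$1$ strips. So the alternative is sound and coincides with the paper's argument; the primary route does not.
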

\begin{proof}The formula is easily checked for any of the Maslov index zero small triangles, and respects splicing in any Maslov index 1 strip. Since any two triangles on this diagram differ by splicing in some number of the Maslov index 1 strips, the formula follows in full generality.
\end{proof}

We can now prove Theorem \ref{thm:doublequasitriangle}.

\begin{proof}Suppose that $u_i$ is a sequence of holomorphic triangles of Maslov index zero representing a class $\psi\in\pi_2(\Theta_{\alpha\beta}^+\times p_0^+,\ve{x}\times x, \ve{y}\times y)$, for almost complex structure $\cJ(T_i)$, where $T_i$ is a sequence of neck lengths approaching $+\infty$. Adapting the proof of Proposition \ref{lem:differentialcomp}, the limiting curves which appear can be arranged into three classes of broken holomorphic curves:
\begin{enumerate}
\item a broken holomorphic triangle $u_\Sigma$ representing a homology class $\psi_\Sigma$ on $(\Sigma, \ve{\alpha},\ve{\beta},\ve{\gamma})$ which has no boundary components on $\alpha_s$ or $\beta_s$;
\item a broken holomorphic disk $u_{\alpha\beta}$ on $(\Sigma, \ve{\alpha}\cup\{\alpha_s\},\ve{\beta}\cup \{\beta_s\})$ representing a class $\phi_{\alpha\beta}\in \pi_2(\Theta_{\alpha\beta}^+\times p^+, \ve{y}\times p^+)$, for some $\ve{y}\in \bT_\alpha\cap \bT_\gamma$;
\item a broken holomorphic triangle $u_0$ on $(S^2,\alpha_0,\beta_0,\gamma_0)$ which represents a homology triangle $\psi_0\in \pi_2(p_0^+,x,y)$.
\end{enumerate}

We now wish to write down the Maslov index of $\psi$ in terms of the Maslov indices and multiplicities of $\psi_\Sigma, \psi_0$ and $\phi_{\alpha\beta}$. Let $m_1(\cdot), m_2(\cdot),n_1(\cdot),$ and $n_2(\cdot)$ denote the the multiplicities of a homology curve in the regions surrounding $p^+$ or $p_0^-$, as in Figure \ref{fig::47}. In \cite{SMaslov}, Sarkar derives a formula for the Maslov index of a homology triangle $\rho\in \pi_2(\ve{x},\ve{y},\ve{z})$ which can be computed entirely from the domain $\cD(\rho)$. Writing $\cD=\cD(\rho)$, the formula reads
\[\mu(\rho)=e(\cD)+n_{\ve{x}}(\cD)+n_{\ve{y}}(\cD)+a(\cD). c(\cD)-\frac{d}{2},\] where $d=|\ve{\alpha}|=|\ve{\beta}|=|\ve{\gamma}|$. Here $a(\cD)$ is defined intersection $\d \cD\cap \ve{\alpha}$ (viewed as a 1-chain), and $c(\cD)$ is defined similarly, using the $\ve{\gamma}$-curves. The quantity $a(\cD).c(\cD)$ is defined as the average of the four algebraic intersection numbers of $a'(\cD)$ and $c(\cD)$, where $a'(\cD)$ is a translate of $a(\cD)$ in any of the four ``diagonal directions''. If $s\in\alpha_i\cap \beta_j$, then $n_s(\cD)$ is the average of the multiplicities in the regions surrounding $s$, and if $\ve{s}$ is a set of such intersection points, then $n_{\ve{s}}(\cD)$ is the sum of the $n_s(\cD)$ ranging over $s\in \ve{s}$. 

For a homology triangle $\psi\in \pi_2(\Theta_{\alpha\beta}^+\times p_0^+,\ve{x}\times x, \ve{y}\times y)$ which can be decomposed into homology classes $\psi_{\Sigma},\phi_{\alpha\beta},$ and $\psi_0$ as above (as any homology class admitting holomorphic representatives for arbitrarily large neck length can) we observe that $\mu(\psi)$ can be computed by adding up $\mu(\psi_\Sigma),\mu(\phi_{\alpha\beta})$ and $\mu(\psi_0)$, then subtracting the quantities which are over-counted. This corresponds to subtracting 
$\tfrac{1}{2}(m_1+m_2+n_1+n_2)(\psi),$ which is the excess of Euler measure resulting from removing balls centered at $p^+$ and at $p_0^-$ (note that the Euler measure of a quarter disk is $\tfrac{1}{4}$), and subtracting 
\[2n_{p^+}(\phi_{\alpha\beta})=\tfrac{1}{2}(n_1+n_2+m_1+m_2)(\phi_{\alpha\beta}),\] which is the quantity in the expression 
\[\mu(\phi_{\alpha\beta})=e(\cD(\phi_{\alpha\beta}))+n_{\Theta^+_{\alpha\beta}\times p^+}(\phi_{\alpha\beta})+n_{\Theta^+_{\alpha\beta}\times p^+}(\phi_{\alpha\beta})\]
\[=e(\cD(\phi_{\alpha\beta}))+n_{\Theta_{\alpha\beta}^+}(\phi_{\alpha\beta})+n_{\ve{y}}(\phi_{\alpha\beta})+\tfrac{1}{2}(n_1+n_2+m_1+m_2)(\phi_{\alpha\beta})\] which does not contribute to $\mu(\psi)$. Adding these contributions, we get
\begin{equation}\mu(\psi)=\mu(\psi_\Sigma)+\mu(\psi_0)+\mu(\phi_{\alpha\beta})-\tfrac{1}{2}(n_1+n_2+m_1+m_2)(\psi)-\tfrac{1}{2}(n_1+n_2+m_1+m_2)(\phi_{\alpha\beta}).\label{eq:Maslovindex1}
\end{equation} Writing $\psi_0\in \pi_2(p_0^+,x,y)$, using the vertex multiplicity relations around $p_0^-$, it is an easy computation that
\[(m_1+m_2)(\psi)=(n_1+n_2)(\psi).\] Note also that $m_i(\psi_0)=m_i(\psi)$ and similarly for the multiplicities $n_i$, since we are grouping all holomorphic curves on $(S^2,\alpha_0,\beta_0,\gamma_0)$ appearing in the weak limit into the homology class $\psi_0$. Using the Maslov index formula from Lemma \ref{lem:Maslovindextrianglediagram} for $\psi_0$,  we get from Equation \eqref{eq:Maslovindex1} that 
\[\mu(\psi)=\mu(\psi_\Sigma)+\mu(\phi_{\alpha\beta})+(m_1+m_2+N_1+N_2)(\psi_0)\]\[-\tfrac{1}{2}(m_1+m_2+n_1+n_2)(\psi_0)-\tfrac{1}{2}(n_1+n_2+m_1+m_2)(\phi_{\alpha\beta})\] which  reduces to

\begin{equation}\mu(\psi)=\mu(\psi_\Sigma)+(N_1+N_2)(\psi_0)+\mu(\phi_{\alpha\beta})-\tfrac{1}{2}(n_1+n_2+m_1+m_2)(\phi_{\alpha\beta}),\label{eq:Maslovindex2}\end{equation}
 since $\psi_0$ does not have $p^+$ as a vertex, so the vertex relations at $p^+$ yield
\[\tfrac{1}{2}(n_1+n_2+m_1+m_2)(\psi_0)\]\[=(n_1+n_2)(\psi_0)=(m_1+m_2)(\psi_0).\]

We now use the assumption that $(\Sigma,\ve{\alpha}\cup\{\alpha_s\},\ve{\beta}\cup \{\beta_s\},\ve{w})$ is strongly positive with respect to $p^+$, and hence
\[\mu(\phi_{\alpha\beta})-\tfrac{1}{2}(n_1+n_2+m_1+m_2)(\phi_{\alpha\beta})\ge 0,\] with equality iff $\phi_{\alpha\beta}$ is a constant disk. We note that $\mu(\psi_\Sigma)\ge 0$, since $\psi_\Sigma$ admits a broken holomorphic representative. Hence the formula for $\mu(\psi)$ in Equation \eqref{eq:Maslovindex2} can be written as a sum of nonnegative expressions, and hence each must be zero if $\mu(\psi)=0$. Thus
\[0=\mu(\psi)=\mu(\psi_\Sigma)=\mu(\phi_{\alpha\beta})=N_1=N_2.\]

Note first that this implies that $\psi_\Sigma$ is a constant disk, since generically there are no nonconstant, Maslov index zero disks. From here the argument proceeds in a familiar manner. Note that $\psi_\Sigma$ is a Maslov index 0 broken holomorphic triangle, and hence must be a genuine holomorphic triangle, as nonconstant holomorphic disks, boundary degenerations, and closed surfaces all have strictly positive Maslov index. As such, since $\psi_\Sigma$ is represented by a holomorphic triangle with no boundary components mapped to $\alpha_s$ or $\beta_s$, we must have $m_1(\psi)=m_2(\psi)=n_1(\psi)=n_2(\psi)$.

We now claim that this implies that the off diagonal entries of the matrix representing the triangle map in the statement are zero. Writing $\psi_0\in \pi_2(p_0^+, x,y)$, and using the multiplicities in Figure \ref{fig::47}, the vertex relations at $p_0^+$ read
\[A+B=1,\] and hence exactly one of $A$ and $B$ is 1, and the other is 0, since $A,B\ge 0$. Since $n_1=n_2=m_1=m_2$, we know that by subtracting some number of copies of the $\gamma_0$-boundary degeneration of Maslov index 2 with $N_1=N_2=0$, we get a homology triangle in $\pi_2(p_0^+,x,y)$ with $N_1,N_2,m_1,m_2,n_1,$ and $n_2$ all zero. There are two only homology triangles satisfying that condition, one is in $\pi_2(p_0^+,x^+,y^+)$ and the other is in $\pi_2(p_0^+,x^-,y^-)$, implying that $\psi_0$ itself must be in one of those sets. Hence the off diagonal entries of the matrix are zero.

Since $u_\Sigma$ is a genuine Maslov index zero holomorphic triangle, there must be a curve in $u_0'$ in the broken holomorphic triangle $u_0$ which matches, i.e. which satisfies 
\[\rho^{p^+}(u_\Sigma)=\rho^{p_0}(u_0').\]Recall that if $u:S\to \Sigma\times \Delta$ is a holomorphic map and $q\in \Sigma$ is a point, we define 
\[\rho^q(u)=(\pi_\Delta\circ u)(\pi_\Sigma \circ u)^{-1}(q)\in \Sym^{n_q(u)}(\Delta).\] Since this in particular forces $n_{p^+}(u_\Sigma)=n_{p_0^-}(u_0')$, it is easy to see that there can be no other curves in the broken curve $u_0$ since there are no multiplicities on $(S^2,\alpha_0,\beta_0,\gamma_0)$ which could be increased without increasing $n_1,m_1,n_2,m_2,N_1,$ and $N_2$ while still preserving the vertex relations. By standard gluing arguments (cf. \cite{LCylindrical}*{App. A}, \cite{HFplusTQFT}*{Sec. 6.4}) the count of prematched triangles\footnote{Recall that a prematched triangle is a pair $(u_\Sigma, u_0)$ where $u_\Sigma$ and $u_0$ are holomorphic triangles representing $\psi_\Sigma$ and $\psi_0$ on $(\Sigma,\ve{\alpha},\ve{\beta},\ve{\gamma})$ and $(S^2,\alpha_0,\beta_0,\gamma_0)$ respectively and $\rho^{p^+}(u_\Sigma)=\rho^{p_0}(u_0')$.} is equal to the count holomorphic triangles in $\# \cM_{\cJ(T)}(\psi_\Sigma\# \psi_0)$ for sufficiently large $T$.

 For $x$ and $y$ of the same relative grading (both the top intersection points or both the lower intersection points), for each $k$ there is a unique homology class $\psi_k$ on $(S^2,\alpha_0,\beta_0,\gamma_0)$ in $\pi_2(p_0^+,x,y)$ with $n_1=n_2=m_1=m_2=k$. Thus, adapting the proof of Proposition \ref{lem:differentialcomp}, it is sufficient to count holomorphic triangles on $(S^2,\alpha_0,\beta_0,\gamma_0)$ of homology class $\psi_k$ which match a fixed divisor $\ve{d}\in \Sym^k(\Delta)$. The count of such holomorphic triangles matching $\ve{d}$ is generically $1\pmod{2}$, as can be seen from a Gromov compactness argument nearly identical to the one done at the end of Proposition \ref{lem:differentialcomp}. Hence the diagonal entries of the triangle map matrix are as claimed, completing the proof.
\end{proof}

\section{Invariance of the quasi-stabilization maps}
\label{sec:quasistabilizationnatural}
In this section, we combine the results of the previous section to prove invariance of the quasi-stabilization maps:

\begin{customthm}{\ref{thm:A}} Assume that $(\sigma,\frP)$ is a coloring of the basepoints $\ve{w}\cup \ve{z}$ which is extended by the coloring $(\sigma',\frP)$ of the basepoints $\ve{w}\cup \ve{z}\cup \{w,z\}$. The quasi-stabilization operation induces well defined maps
\[S_{w,z}^+:CFL_{UV}^\infty(Y,L,\ve{w},\ve{z},\sigma,\frP,\frs)\to CFL_{UV}^\infty(Y,L,\ve{w}\cup \{w\},\ve{z}\cup \{z\},\sigma',\frP,\frs)\] and
\[S_{w,z}^-:CFL_{UV}^\infty(Y,L,\ve{w}\cup \{w\},\ve{z}\cup \{z\},\sigma',\frP,\frs)\to CFL_{UV}^\infty(Y,L,\ve{w},\ve{z},\sigma,\frP,\frs),\] which are well defined invariants up to $\frP$-filtered $\Z_2[U_\frP]$-chain homotopy.
\end{customthm}

The proof is to construct the maps for choices of Heegaard diagram and auxiliary data, and show that the maps we describe are independent from that auxiliary data and the choice of diagram.

 If $\cH=(\Sigma, \ve{\alpha},\ve{\beta},\ve{w},\ve{z})$ is a diagram for $\bL=(L,\ve{w},\ve{z})$,  recall from Subsection \ref{subsec:topprelim} that if $A$ denotes the component of $\Sigma\setminus \ve{\alpha}$ containing the basepoints of $(L,\ve{w},\ve{z})$ adjacent to $w$ and $z$, then we pick a point $p\in A\setminus (\ve{\alpha}\cup \ve{\beta}\cup \ve{w}\cup \ve{z})$ and a simple closed curve $\alpha_s\subset A\setminus \ve{\alpha}$ to form a diagram $\bar{\cH}_{p,\alpha_s}$. Let $\cJ$ denote gluing data (cf. Subsection \ref{subsec:gluingdata}) for performing the special connected sum operation at $p\in \Sigma$ and $p_0\in S^2$ and gluing almost complex structures on $\cH$ and $\cH_0$ together.

We now define maps \[S^{+}_{w,z,\cH,p,\alpha_s,\cJ,T}:CFL^\infty_{UV,J_s}(\cH, \frs)\to CFL_{UV,\cJ(T)}^\infty(\bar{\cH}_{p,\alpha_s}, \frs)\] and
\[S^-_{w,z,\cH,p,\alpha_s,\cJ,T}:CFL_{UV,\cJ(T)}^\infty(\bar{\cH}_{p,\alpha_s}, \frs)\to CFL^\infty_{UV,J_s}(\cH, \frs)\] by the formulas
\[S^{+}_{w,z,\cH,p,\alpha_s,\cJ}(\ve{x})=\ve{x}\times \theta^+\] and
\[S^{-}_{w,z,\cH,p,\alpha_s,\cJ}(\ve{x}\times \theta^+)=0 \qquad \text{ and }\qquad  S^{+}_{w,z,\cH,p,\alpha_s,\cJ}(\ve{x}\times \theta^-)=\ve{x},\] where $J_s$ denotes the almost complex structure on $\Sigma$ associated to the gluing data $\cJ$ and $T$ is sufficiently large.

The maps $S_{w,z,\cH,p,\alpha_s,\cJ,T}^{\pm}$ can be extended to the entire $\frP$-filtered chain homotopy-type invariant by pre- and post-composing with change of diagram maps and change of almost complex structure maps. By functoriality of the change of diagrams maps, we get well defined maps $S_{w,z,\cH,p,\alpha_s,\cJ,T}^{\pm}$ between $CFL_{UV}^\infty(Y,L,\ve{w},\ve{z},\sigma,\frP,\frs)$ and $CFL_{UV}^\infty(Y,L,\ve{w}\cup \{w\},\ve{z}\cup \{z\},\sigma',\frP,\frs)$, though of course we need to show independence from the choices of $\cH,p,\alpha_s,\cJ,$ and $T$.

We first address independence from $\cJ$ and the parameter $T$. Recalling Lemma \ref{lem:transitionmapsstabilize}, there is an $N$ such that if $T,T'> N$ we have
\[\Phi_{\cJ(T)\to \cJ(T')}\simeq \begin{pmatrix}1& *\\
0& 1\end{pmatrix}.\] Hence, as with the free stabilization maps defined in \cite{HFplusTQFT}, we define the maps $S_{w,z,\cH,p,\alpha_s,\cJ}^{\pm}$ to be between the complexes $CFL_{J_s,UV}^\infty(\cH,\frs)$ and $CFL_{\cJ(T),UV}^\infty(\bar{\cH}_{p,\alpha_s},\frs)$ for any $T$ greater than any such $N$.

\begin{lem}The maps $S_{w,z,\cH,p,\alpha_s,\cJ}^{\pm}$ are independent of $\cJ$ and the parameter $T$.
\end{lem}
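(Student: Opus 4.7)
The plan is to verify independence in two stages, corresponding to the two key lemmas cited just above: first fix the gluing data $\cJ$ and vary the neck length $T$, then fix $T$ sufficiently large and vary $\cJ$.

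For the first stage, pick $T, T' > N$ where $N$ is large enough for $\cJ$ in the sense of Lemma~\ref{lem:transitionmapsstabilize}. I will show the identities
\[\Phi_{\cJ(T)\to\cJ(T')}\circ S^+_{w,z,\cH,p,\alpha_s,\cJ,T}\simeq S^+_{w,z,\cH,p,\alpha_s,\cJ,T'}\]
and the analogous one with $S^-$ composed on the right of $\Phi_{\cJ(T)\to\cJ(T')}$. Writing elements of $CFL^\infty_{UV,\cJ(T)}(\bar{\cH}_{p,\alpha_s},\frs)$ in the block decomposition by $\theta^+$ and $\theta^-$ from Proposition~\ref{lem:differentialcomp}, Lemma~\ref{lem:transitionmapsstabilize} says $\Phi_{\cJ(T)\to\cJ(T')}$ has matrix form $\bigl(\begin{smallmatrix}1&*\\0&1\end{smallmatrix}\bigr)$. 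Hence it preserves $\ve{x}\times\theta^+$, giving the $S^+$ identity, and it preserves the $\theta^-$-component of any input, so $S^-_{\cJ,T'}\circ\Phi$ agrees with $S^-_{\cJ,T}$ on both $\ve{x}\times\theta^+$ (each gives $0$) and on $\ve{x}\times\theta^-$ (each gives $\ve{x}$).

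For the second stage, fix $T$ sufficiently large and let $\cJ$, $\bar\cJ$ be two choices of gluing data with almost complex structures $J_s,\bar J_s$ on $\Sigma\times[0,1]\times \R$. I claim
\[\Phi_{\cJ(T)\to\bar\cJ(T)}\circ S^+_{\cJ,T}\simeq S^+_{\bar\cJ,T}\circ \Phi_{J_s\to\bar J_s},\]
and mirror identity for $S^-$. By Lemma~\ref{lem:changeofalmostcomplexstructure}, $\Phi_{\cJ(T)\to\bar\cJ(T)}$ has the block form $\bigl(\begin{smallmatrix}\Phi_{J_s\to\bar J_s}&*\\0&\Phi_{J_s\to\bar J_s}\end{smallmatrix}\bigr)$, so it sends $\ve{x}\times\theta^+$ to $\Phi_{J_s\to\bar J_s}(\ve{x})\times\theta^+$, which is exactly $S^+_{\bar\cJ,T}(\Phi_{J_s\to\bar J_s}(\ve{x}))$, and the analogous matrix computation handles $S^-$. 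These commutations say precisely that $S^\pm_{\cJ,T}$ and $S^\pm_{\bar\cJ,T}$ represent the same map on the coherent chain homotopy type.

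To combine, given any two choices $(\cJ,T)$ and $(\bar\cJ,T')$ each sufficiently large, I link $(\cJ,T)\to (\cJ,T')$ via the first stage with $(\cJ,T')\to (\bar\cJ,T')$ via the second, invoking functoriality of the change-of-almost-complex-structure maps up to $\frP$-filtered $\Z_2[U_\frP]$-equivariant chain homotopy to splice the comparisons together. The only substantive point beyond the block-matrix computations is to check that all the chain homotopies involved are $\frP$-filtered and $\Z_2[U_\frP]$-equivariant; this is automatic because the maps $S^\pm$ introduce no $U_\frP$-coefficients and the matrix entries produced by Lemmas~\ref{lem:transitionmapsstabilize} and~\ref{lem:changeofalmostcomplexstructure} are themselves $\frP$-filtered and $\Z_2[U_\frP]$-equivariant by construction. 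I do not expect a substantial obstacle here—the real work has already been absorbed into those two lemmas and into Proposition~\ref{lem:differentialcomp}—the argument at this stage is block-matrix bookkeeping.
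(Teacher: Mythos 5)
Your proposal is correct and follows essentially the same route as the paper: the paper's proof simply cites Lemma \ref{lem:transitionmapsstabilize} for independence of $T$ and Lemma \ref{lem:changeofalmostcomplexstructure} for independence of $\cJ$, which is exactly your two-stage argument with the block-matrix bookkeeping spelled out. Your explicit verification that the upper-triangular transition maps intertwine $S^{\pm}$ (on $\theta^{\pm}$-components) with the identity, respectively with $\Phi_{J_s\to\bar J_s}$, is precisely what those citations are implicitly invoking.
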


\begin{proof}  Lemma \ref{lem:transitionmapsstabilize} implies that $S_{w,z,\cH,p,\alpha_s,\cJ,T}^{\pm}$ is independent of $T$ for any $T$ which is sufficiently large. Let $S_{w,z,\cH,p,\alpha_s,\cJ}^{\pm}$ denote the common map. Lemma \ref{lem:changeofalmostcomplexstructure} implies that the maps $S^{\pm}_{w,z,\cH,p,\alpha_s,\cJ}$ are independent of $\cJ$. We denote the common map from now on by $S_{w,z,\cH,p,\alpha_s}^{\pm}$.\end{proof}

\begin{lem}\label{lem:quasistabindependentofalphas}For a fixed diagram $\cH$ with fixed $p\in \Sigma$, the maps $S_{w,z,\cH,p,\alpha_s}^{\pm}$ are independent of $\alpha_s$.
\end{lem}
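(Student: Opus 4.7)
The plan is to reduce to invariance under a sequence of elementary moves on $\alpha_s$. First I would argue that any two admissible choices $\alpha_s$ and $\alpha_s'$ can be connected by a finite sequence of (i) isotopies through curves in $\Sigma\setminus(\ve{\alpha}\cup L)$ fixing $p$ and (ii) handleslides of $\alpha_s$ across a single curve $\alpha_i\in \ve{\alpha}$. This follows from standard arguments about compression bodies relative to $L$ together with the requirement that both curves bound compressing disks in $U_\alpha$ disjoint from $L$ and both pass through the same point $p$. Moves of type (i) can be absorbed into a change of gluing data $\cJ$, so by Lemma \ref{lem:changeofalmostcomplexstructure} the quasi-stabilization maps are unaffected up to $\frP$-filtered chain homotopy. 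Thus it suffices to treat a single handleslide.

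Suppose $\alpha_s'$ is obtained from $\alpha_s$ by a handleslide across some $\alpha_i\in \ve{\alpha}$, and let $\ve{\alpha}''$ denote small Hamiltonian translates of $\ve{\alpha}$. After one further small Hamiltonian perturbation arranging $\alpha_s'$ and $\alpha_s$ to intersect transversely at exactly two points $p^+$ and $p^-$, the change-of-diagram map $\Phi_{\bar{\cH}_{p,\alpha_s}\to \bar{\cH}_{p,\alpha_s'}}$ is computed by the triangle map associated to the Heegaard triple
\[
\bar{\cT}=\big(\bar{\Sigma}_p,\ \ve{\alpha}''\cup\{\bar{\alpha}_s'\},\ \ve{\alpha}\cup\{\bar{\alpha}_s\},\ \ve{\beta}\cup\{\beta_0\},\ \ve{w}\cup\{w\},\ \ve{z}\cup\{z\}\big),
\]
which is exactly the quasi-stabilization at $p^+$ of the triple $\cT=(\Sigma,\ve{\alpha}'',\ve{\alpha},\ve{\beta},\ve{w},\ve{z})$ along the two curves $\alpha_s'$ and $\alpha_s$. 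By the first case of Remark \ref{rem:stronglypositivediagrams}, the diagram $(\Sigma,\ve{\alpha}''\cup\{\alpha_s'\},\ve{\alpha}\cup\{\alpha_s\},\ve{w}\cup\{w\})$ is strongly positive at $p^+$, so Theorem \ref{thm:doublequasitriangle} applies. For sufficiently long neck length the triangle map on $\bar{\cT}$ decomposes diagonally as
\[
F_{\bar{\cT}}\big(\Theta^+_{\ve{\alpha}''\ve{\alpha}}\times p_0^+,\ \ve{x}\times \theta\big)=F_{\cT,\frs}(\Theta^+_{\ve{\alpha}''\ve{\alpha}},\ve{x})\times \theta,
\]
where $\theta$ ranges over $\theta^\pm\in \bar{\alpha}_s\cap \beta_0$ on the input paired with the corresponding $\theta^\pm\in \bar{\alpha}_s'\cap \beta_0$ on the output.

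Combining this diagonal form with the tautological defining formulas $S^+(\ve{x})=\ve{x}\times \theta^+$ and $S^-(\ve{x}\times \theta^+)=0$, $S^-(\ve{x}\times \theta^-)=\ve{x}$, we obtain
\[
\Phi_{\bar{\cH}_{p,\alpha_s}\to \bar{\cH}_{p,\alpha_s'}}\circ S^+_{w,z,\cH,p,\alpha_s}\simeq S^+_{w,z,\cH,p,\alpha_s'}\circ F_{\cT,\frs}(\Theta^+_{\ve{\alpha}''\ve{\alpha}},-),
\]
and analogously for $S^-$. Since $\ve{\alpha}''$ is a small Hamiltonian perturbation of $\ve{\alpha}$, the map $F_{\cT,\frs}(\Theta^+_{\ve{\alpha}''\ve{\alpha}},-)$ is the change-of-diagram map between $\cH$ and its $\ve{\alpha}''$-perturbation, which becomes chain homotopic to the identity after composition with the reverse continuation map. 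Absorbing these continuation maps into the change-of-diagram normalization of the full invariant $CFL_{UV}^\infty(Y,L,\ve{w},\ve{z},\sigma,\frP,\frs)$ yields $S^+_{w,z,\cH,p,\alpha_s}\simeq S^+_{w,z,\cH,p,\alpha_s'}$, and likewise for $S^-$.

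The main obstacle I anticipate is the topological claim in the first step, particularly ensuring that the connecting moves stay disjoint from $L$ throughout and that the required admissibility of $\bar{\cT}$ can be arranged without disrupting the construction of the gluing data. This should follow from a transversality argument applied to one-parameter families of compressing disks in $U_\alpha\setminus L$, but verifying that generic degenerations of such families produce only elementary $\ve{\alpha}$-handleslides, rather than moves also involving $\ve{\beta}$-curves, will require some care.
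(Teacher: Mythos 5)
Your core argument for the handleslide case is exactly the paper's: realize the move from $\alpha_s$ to $\alpha_s'$ by quasi-stabilizing the triple $(\Sigma,\ve{\alpha}'',\ve{\alpha},\ve{\beta},\ve{w},\ve{z})$ along the two curves $\alpha_s'$ and $\alpha_s$ meeting at $p^\pm$, invoke Remark \ref{rem:stronglypositivediagrams} for strong positivity at $p^+$, apply Theorem \ref{thm:doublequasitriangle} to get the (upper triangular, in fact diagonal) form of the change-of-diagram map, and conclude that $S^{\pm}_{w,z}$ intertwine the change-of-diagram maps on the stabilized and unstabilized complexes, so the induced maps on the coherent invariant agree. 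That part is fine and is what the paper does.

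The genuine gap is your treatment of the type (i) moves. You claim that an isotopy of $\alpha_s$ through curves in $\Sigma\setminus(\ve{\alpha}\cup\ve{w}\cup\ve{z})$ "can be absorbed into a change of gluing data $\cJ$" and then cite Lemma \ref{lem:changeofalmostcomplexstructure}. That lemma compares two choices of gluing data for the \emph{same} quasi-stabilized diagram $\bar{\cH}_{p,\alpha_s}$, i.e.\ with $\alpha_s$ fixed; the gluing data $(\psi,J_s,\dots)$ never moves $\alpha_s$ inside $\Sigma$. An isotopy of $\alpha_s$ is only constrained to avoid the $\ve{\alpha}$-curves, so it will in general sweep across $\ve{\beta}$-curves, changing the intersection pattern and hence producing a genuinely different Heegaard diagram; identifying the two complexes then requires a change-of-diagram (continuation/triangle) map, not a change of almost complex structure for a fixed diagram. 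As written, this step fails, and with it the reduction that feeds your handleslide argument. The repair is to treat isotopies of $\alpha_s$ by the same mechanism you already use for handleslides — each (small) isotopy is realized by quasi-stabilizing a triple along $\alpha_s'$ and $\alpha_s$ with $\alpha_s'\cap\alpha_s=\{p^+,p^-\}$ and the resulting $(\ve{\alpha}'\cup\{\alpha_s'\},\ve{\alpha}\cup\{\alpha_s\},\ve{w}\cup\{w\})$ diagram strongly positive at $p^+$, then Theorem \ref{thm:doublequasitriangle} applies verbatim — which is precisely how the paper's proof handles both isotopies and handleslides uniformly. (Your closing worry about degenerations forcing moves involving $\ve{\beta}$-curves is not the issue: the connectivity statement for the $\alpha_s$-curves takes place entirely in the $\ve{\alpha}$-handlebody rel $L$; the $\ve{\beta}$-curves only enter because isotopies cross them, which is exactly why those isotopies need the triangle-map computation rather than a gluing-data argument.)
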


\begin{proof} This follows from the triangle map computation in Theorem \ref{thm:doublequasitriangle}, which allows one to change the $\alpha_s$ curve through  a sequence of isotopies of the $\alpha_s$ curve, and handleslides of the $\alpha_s$ curve over other $\ve{\alpha}$-curves, each of which can be realized by quasi-stabilizing a Heegaard triple $(\Sigma, \ve{\alpha}',\ve{\alpha},\ve{\beta},\ve{w},\ve{z})$ along two curves $\alpha_s'$ and $\alpha_s$ with $\alpha_s'\cap \alpha_s=\{p^+,p^-\}$, such that $(\Sigma, \ve{\alpha}'\cup \{\alpha_s'\},\ve{\alpha}\cup \{\alpha_s\},\ve{w})$ is strongly positive at $p^+$ (cf. Remark \ref{rem:stronglypositivediagrams}). For each such move, by Theorem \ref{thm:doublequasitriangle} the change of diagrams map can be written as
\[\Phi^{\ve{\alpha}\cup \{\alpha_s\}\to \ve{\alpha}'\cup \{\alpha_s'\}}_{\ve{\beta}\cup \{\beta_s\}}=\begin{pmatrix}\Phi^{\ve{\alpha}\to \ve{\alpha}'}_{\ve{\beta}}&*\\
0&\Phi^{\ve{\alpha}\to \ve{\alpha}'}_{\ve{\beta}}
\end{pmatrix},\] where the curves in $\ve{\alpha}'$ are small Hamiltonian isotopies of the curves in $\ve{\alpha}$. Similarly by Theorem \ref{thm:doublequasitriangle} we also have
\[\Phi^{\ve{\alpha}\cup \{\alpha_s\}\to \ve{\alpha}'\cup \{\alpha_s\}}_{\ve{\beta}\cup \{\beta_s\}}=\begin{pmatrix}\Phi^{\ve{\alpha}\to \ve{\alpha}'}_{\ve{\beta}}&*\\
0&\Phi^{\ve{\alpha}\to \ve{\alpha}'}_{\ve{\beta}}
\end{pmatrix},\] completing the proof.
\end{proof}

We now let $S_{w,z,\cH,p}^{\pm}$ denote the map $S_{w,z,\cH,p,\alpha_s}^{\pm}$ for any choice of $\alpha_s$. We now prove independence from the choice of point $p\in \Sigma$.

\begin{lem}\label{lem:independentofp}Given a fixed diagram $\cH$, the maps $S_{w,z,\cH,p}^{\pm}$ are independent of the choice of point $p\in \Sigma$.
\end{lem}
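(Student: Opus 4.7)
The plan is to reduce to an ambient-isotopy argument by first modifying the diagram so that $p$ and $p'$ lie in a common complementary region. I would proceed in three steps.

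First, fix a path $\sigma\subset A\setminus(\ve{w}\cup\ve{z})$ from $p$ to $p'$ meeting $\ve{\beta}$ transversely in finitely many points. By an inductive finger-move argument on the number of intersections of $\sigma$ with $\ve{\beta}$, construct an isotopy of $\ve{\beta}$-curves through pairwise disjoint simple closed curves (itself decomposed into small Hamiltonian isotopies) producing a new family $\ve{\beta}^{new}$ disjoint from $\sigma$. Then in $\cH^{new}=(\Sigma,\ve{\alpha},\ve{\beta}^{new},\ve{w},\ve{z})$ the points $p$ and $p'$ lie in a single connected component of $A\setminus(\ve{\beta}^{new}\cup\ve{w}\cup\ve{z})$.

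Second, in $\cH^{new}$, use the isotopy extension theorem to find a diffeomorphism of $Y$ supported in a tubular neighborhood of $\sigma$, fixing $\ve{\alpha}\cup\ve{\beta}^{new}\cup\ve{w}\cup\ve{z}$ and $L$, that carries $p$ to $p'$. By Lemma \ref{lem:quasistabindependentofalphas}, I may choose the same curve $\alpha_s$ for both $p$ and $p'$ such that the relevant arc of $\alpha_s$ containing the special connected-sum point lies in this neighborhood. The diffeomorphism then identifies the stabilized diagrams $\bar{\cH}^{new}_{p,\alpha_s}$ and $\bar{\cH}^{new}_{p',\alpha_s}$, and the induced chain map carries $\ve{x}\times\theta^\pm$ to $\ve{x}\times\theta^\pm$, yielding $S^\pm_{w,z,\cH^{new},p}\simeq S^\pm_{w,z,\cH^{new},p'}$ by naturality of the change-of-diagram maps.

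Third, transport this equivalence back to $\cH$ via the $\ve{\beta}$-isotopy change-of-diagram maps. Apply Lemma \ref{lem:singlealphaquasistabtriangle} to the Heegaard triple $(\Sigma,\ve{\alpha},\ve{\beta},\ve{\beta}^{new},\ve{w},\ve{z})$, quasi-stabilized at $p$ (respectively $p'$) along $\alpha_s$. The lemma shows that the stabilized triangle map is diagonal in the $\theta^\pm$ basis with diagonal entries equal to the unstabilized triangle map $F_\cT(\cdot,\Theta^+_{\beta\beta^{new}})=\Phi_{\cH\to\cH^{new}}$; equivalently, the change-of-diagram maps on the stabilized and unstabilized sides intertwine the quasi-stabilization maps $S^\pm_{w,z,\cH,\cdot}$ and $S^\pm_{w,z,\cH^{new},\cdot}$ up to $\frP$-filtered $\Z_2[U_\frP]$-chain homotopy. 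Combining this intertwining with the equality from the second step yields $S^\pm_{w,z,\cH,p}\simeq S^\pm_{w,z,\cH,p'}$ as maps between the abstract $\frP$-filtered chain-homotopy-type invariants.

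The main obstacle is the first step: we must push all the $\ve{\beta}$-curves off $\sigma$ through pairwise disjoint simple closed curves, which requires care when several $\ve{\beta}$-curves meet $\sigma$ near the same point. I would handle this by working one intersection at a time, pushing $\beta_j$ off $\sigma$ via a finger move supported in a thin collar neighborhood of $\sigma$ that avoids the other $\ve{\beta}$-curves. Since each finger move is a small Hamiltonian isotopy, the resulting change-of-diagram map $\Phi_{\cH\to\cH^{new}}$ is a composition of elementary $\ve{\beta}$-isotopy triangle maps, so Lemma \ref{lem:singlealphaquasistabtriangle} applies piece-by-piece and the third step assembles by composition.
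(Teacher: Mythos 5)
Your argument is correct, but it takes a different route from the paper's. The paper does not first clear the $\ve{\beta}$-curves away from a path joining $p$ to $p'$: it chooses an isotopy $\phi_t$ of $\Sigma$ supported in the region $A$ carrying $p$ to $p'$, extends it to an ambient isotopy of $Y$ fixing the basepoints and preserving $L$, and accepts that $\phi$ drags the $\ve{\beta}$-curves passing through $A$. It then factors the transition maps as $\Phi^{\ve{\alpha}}_{\phi_*\ve{\beta}\to\ve{\beta}}\circ\phi_*$ on both the stabilized and unstabilized levels and invokes Theorem \ref{thm:doublequasitriangle} to see that the stabilized $\ve{\beta}$-change map is upper triangular with the unstabilized change map on the diagonal, which gives the required intertwining. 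You instead spend the effort up front: by finger moves you push $\ve{\beta}$ off an arc $\sigma$ from $p$ to $p'$ (after which $\sigma$ itself connects $p$ to $p'$ in the complement), so that the diffeomorphism moving $p$ to $p'$ can be chosen to fix $\ve{\alpha}\cup\ve{\beta}^{new}$ and act tautologically on generators, and the only nontrivial comparison left is invariance of $S^{\pm}_{w,z}$ under $\ve{\beta}$-isotopies, which Lemma \ref{lem:singlealphaquasistabtriangle} supplies (exactly as the paper later uses it for diagram independence, so there is no circularity; and like the paper you fall back on the already-proved $\alpha_s$- and $\cJ$-independence). What your route buys is that Theorem \ref{thm:doublequasitriangle} is not needed for this particular lemma, only the imported single-curve computation; what the paper's route buys is that no modification of the diagram or finger-move bookkeeping is required. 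One small point of care in your step two: the diffeomorphism cannot literally be supported only in a tubular neighborhood of $\sigma\subset\Sigma$ if it is to identify the stabilized surfaces; rather, one extends it to $Y$ and observes that $\phi_*(\bar{\Sigma}_p)$ is an allowed representative of $\bar{\Sigma}_{p'}$ because the defining condition on the arc $\lambda$ is preserved — this is the same implicit step as the paper's assertion $\phi_*(\bar{\Sigma}_p)=\bar{\Sigma}_{p'}$, so it is an imprecision of phrasing rather than a gap.
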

\begin{proof} Let $A$ denote the component of $\Sigma\setminus \ve{\alpha}$ containing the basepoints on $\bL$ which are adjacent to $w$ and $z$. Let  $p$ and $p'$ be two choices of points in $A\setminus (\ve{\beta}\cup \ve{w}\cup \ve{z})$. Let $\phi_t$ be an isotopy $\phi_t:\Sigma\to \Sigma$ which fixes $\Sigma\setminus A$ and maps $p$ to $p'$. Recall that the surfaces $\bar{\Sigma}_p$ were well defined up to an isotopy fixing $\ve{\alpha}\cup \ve{\beta}\cup \ve{w}\cup \ve{z}$ and mapping $L$ to $L$. Extend $\phi=\phi_1$ to an isotopy of $Y$ which fixes $(\Sigma\setminus A)\cup \ve{w}\cup \ve{z}\cup \{w,z\}$ and maps $L$ to $L$. By definition
\[(\phi)_*(\bar{\Sigma}_p)=\bar{\Sigma}_{p'},\] as embedded surfaces. The diffeomorphism $\phi$ fixes all the curves in $\ve{\alpha}$, but moves some of the $\ve{\beta}$-curves which pass through the region $A$.

 Observe the following factorizations: 
\begin{equation}\Phi_{(\Sigma, \ve{\alpha},\ve{\beta},J_s)\to (\Sigma, \ve{\alpha},\ve{\beta},\phi_*J_s)}\simeq \Phi_{\phi_*\ve{\beta}\to\ve{\beta}}^{\ve{\alpha}}\circ \phi_*\label{eq:factortransitionmaps}
\end{equation} and similarly
\[\Phi_{(\bar{\Sigma}_p, \ve{\alpha}\cup \{\alpha_s\},\ve{\beta}\cup \{\beta_0\},\cJ(T))\to (\bar{\Sigma}_{p'}, \ve{\alpha}\cup \{\phi_*\alpha_s\},\ve{\beta}\cup \{\phi_* \beta_0\},\phi_*\cJ(T))}\simeq \Phi_{(\phi_*\ve{\beta})\cup \{\phi_* \beta_0\}\to\ve{\beta}\cup \{\phi_*\beta_0\}}^{\ve{\alpha}\cup \{\phi_* \alpha_s\}}\circ \phi_*.\] Using Theorem \ref{thm:doublequasitriangle}, for sufficiently stretched almost complex structure we can write
\[\Phi_{(\phi_*\ve{\beta})\cup \{\phi_* \beta_0\}\to\ve{\beta}\cup \{\phi_*\beta_0\}}^{\ve{\alpha}\cup \{\phi_* \alpha_s\}}\simeq \begin{pmatrix}\Phi_{\phi_*\ve{\beta}\to\ve{\beta}}^{\ve{\alpha}}&*\\
0& \Phi_{\phi_*\ve{\beta}\to\ve{\beta}}^{\ve{\alpha}}
\end{pmatrix},\] and hence
\[\Phi_{(\phi_*\ve{\beta})\cup \{\phi_* \beta_0\}\to\ve{\beta}\cup \{\phi_*\beta_0\}}^{\ve{\alpha}\cup \{\phi_* \alpha_s\}}\circ \phi_*\simeq \begin{pmatrix}\Phi_{\phi_*\ve{\beta}\to\ve{\beta}}^{\ve{\alpha}}\circ \phi_*&*\\
0& \Phi_{\phi_*\ve{\beta}\to\ve{\beta}}^{\ve{\alpha}}\circ \phi_*
\end{pmatrix}.\] Combining this with Equation \eqref{eq:factortransitionmaps}, we see that for sufficiently large $T$, we have
\begin{align*}&\phantom{\simeq} \hspace{3mm}\Phi_{(\bar{\Sigma}_p, \ve{\alpha}\cup \{\alpha_s\},\ve{\beta}\cup \{\beta_0\},\cJ(T))\to (\bar{\Sigma}_{p'}, \ve{\alpha}\cup \{\phi_*\alpha_s\},\ve{\beta}\cup \{\phi_* \beta_0\},\phi_*\cJ(T))}\\&\simeq \begin{pmatrix}\Phi_{(\Sigma, \ve{\alpha},\ve{\beta},J_s)\to (\Sigma, \ve{\alpha},\ve{\beta},\phi_*J_s)}&*\\
0& \Phi_{(\Sigma, \ve{\alpha},\ve{\beta},J_s)\to (\Sigma, \ve{\alpha},\ve{\beta},\phi_*J_s)}
\end{pmatrix}.\end{align*}Since the above map is upper triangular with diagonal entries equal to the change of diagrams maps, the change of diagrams maps commute with the maps  $S_{w,z,\cH,p}^{\pm}$ and $S_{w,z,\cH,p'}^{\pm}$ in the appropriate sense. Since any two choices of points $p$ and $p'$ at which we can perform quasi-stabilization are in the region $A$, we know that the maps on the filtered chain homotopy invariant $CFL_{UV}^\infty$ induced by $S_{w,z,\cH,p,\alpha_s\cJ}^{\pm}$ and $S_{w,z,\cH,p',\phi_*\alpha_s,\phi_* \cJ}^{\pm}$ are equal. Since we proved invariance from the gluing data $\cJ$ in Lemma \ref{lem:changeofalmostcomplexstructure}, and we proved invariance from the curve $\alpha_s$ in Lemma \ref{lem:quasistabindependentofalphas}, the proof is thus complete.\end{proof}

We let $S_{w,z,\cH}^{\pm}$ denote the map $S_{w,z,\cH,p}^{\pm}$ for any choice of $p$  in the component of $\Sigma\setminus \ve{\alpha}$ containing the basepoints of $L$ adjacent to $w$ and $z$.

\begin{lem}If $\cH$ and $\cH'$ are two diagrams for $\bL=(L,\ve{w},\ve{z})$, then the maps $S_{w,z,\cH}^{\pm}$ and $S_{w,z,\cH'}^{\pm}$ are filtered chain homotopic (recall that the maps $S_{w,z,\cH}^{\pm}$ and $S_{w,z,\cH}^{\pm}$ are extended to the entire chain homotopy type invariant by pre- and post-composing with change of diagrams maps).
\end{lem}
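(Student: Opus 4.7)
The plan is to reduce to commutation of $S_{w,z,\cH}^{\pm}$ with the elementary change-of-diagrams maps provided by naturality, and then handle each elementary move separately. By \cite{JTNaturality}, up to $\frP$-filtered chain homotopy, $\Phi_{\cH\to \cH'}$ factors as a composition of maps of the following types: (a) change of almost complex structure on a fixed diagram; (b) $\ve{\alpha}$- or $\ve{\beta}$-isotopies and handleslides; (c) $(1,2)$-stabilizations of $\Sigma$; (d) isotopies of $\Sigma$ in $Y$ relative to $L$. It therefore suffices to show, for each such elementary move $F$, that $F\circ S_{w,z,\cH}^{\pm}\simeq S_{w,z,\cH'}^{\pm}\circ F$ after appropriate extension; functoriality of change-of-diagrams maps then glues these into the general statement.

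For type (a), the required commutation is exactly Lemma \ref{lem:changeofalmostcomplexstructure}, applied to gluing data extending any chosen pair of almost complex structures. For type (b) on the $\ve{\beta}$-side, I would first use Lemma \ref{lem:independentofp} and Lemma \ref{lem:quasistabindependentofalphas} to reposition $p$ and $\alpha_s$ into a subregion of $A$ disjoint from the support of the $\ve{\beta}$-isotopy/handleslide. The triangle diagram $(\Sigma,\ve{\alpha},\ve{\beta},\ve{\beta}',\ve{w},\ve{z})$ computing the change of diagrams map then quasi-stabilizes along the single curve $\alpha_s$, so Lemma \ref{lem:singlealphaquasistabtriangle} identifies the stabilized triangle map as block diagonal with the unstabilized map on the diagonal, yielding commutation with the formulas $S_{w,z}^{+}(\ve{x})=\ve{x}\times \theta^+$ and $S_{w,z}^{-}(\ve{x}\times \theta^{\pm})$.

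For type (b) on the $\ve{\alpha}$-side, I would first move $p$ and $\alpha_s$ away from the support of the move, and then take $\alpha_s'$ to be a small Hamiltonian isotope of $\alpha_s$. The Heegaard triple $(\Sigma,\ve{\alpha}',\ve{\alpha},\ve{\beta},\ve{w},\ve{z})$ becomes a doubly-quasi-stabilized triple with curves $\alpha_s'$ (on the $\ve{\alpha}'$-side) and $\alpha_s$ (on the $\ve{\alpha}$-side), to which Theorem \ref{thm:doublequasitriangle} applies because the required strong positivity at the higher intersection point $p^+$ is guaranteed by Remark \ref{rem:stronglypositivediagrams}. The theorem says the triangle map is block upper-triangular with the unstabilized triangle map on the diagonal, from which commutation with $S_{w,z}^{\pm}$ follows immediately. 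For type (c), a $(1,2)$-stabilization occurs in a small $3$-ball disjoint from $L$; after repositioning $p$ and $\alpha_s$ away from this ball, the two operations act on disjoint portions of the diagram and commute by inspection. For type (d), we repeat the argument of Lemma \ref{lem:independentofp} essentially verbatim: an ambient isotopy of $\Sigma$ relative to $L$ is realized by a tautological diffeomorphism map followed by $\ve{\beta}$-isotopy maps, both already handled above.

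The main obstacle is really bookkeeping: for each elementary move one must verify that the independence results already established (Lemmas \ref{lem:changeofalmostcomplexstructure}, \ref{lem:quasistabindependentofalphas}, \ref{lem:independentofp}) let us position $p$ and $\alpha_s$ so the hypotheses of Lemma \ref{lem:singlealphaquasistabtriangle} or Theorem \ref{thm:doublequasitriangle} are satisfied, and that the resulting commutation squares can be spliced together coherently. Once each elementary commutation is in place, functoriality of the change-of-diagrams maps yields a well-defined map $S_{w,z}^{\pm}$ on the coherent $\frP$-filtered chain homotopy type invariant $CFL^{\infty}_{UV}(Y,\bL,\sigma,\frP,\frs)$, completing the proof of Theorem \ref{thm:A}.
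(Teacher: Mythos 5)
Your proposal is correct and follows essentially the same route as the paper: decompose $\Phi_{\cH\to\cH'}$ into elementary moves, use Lemma \ref{lem:singlealphaquasistabtriangle} for $\ve{\beta}$-moves, Theorem \ref{thm:doublequasitriangle} together with the strong positivity supplied by Remark \ref{rem:stronglypositivediagrams} for $\ve{\alpha}$-moves, note that $(1,2)$-stabilizations commute by inspection, and handle ambient isotopies of $\Sigma$ in $Y$ tautologically using the already-established independence from $p$, $\alpha_s$, and $\cJ$. The only cosmetic difference is that the paper treats the ambient isotopy case directly via the identity $\phi_*\circ S^{\pm}_{w,z,\cH,p,\cJ}=S^{\pm}_{w,z,\phi_*\cH,\phi_* p,\phi_*\cJ}\circ\phi_*$ rather than rerunning the argument of Lemma \ref{lem:independentofp}.
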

\begin{proof}Suppose that $\cH=(\Sigma, \ve{\alpha},\ve{\beta},\ve{w},\ve{z})$ and $\cH'=(\Sigma',\ve{\alpha}',\ve{\beta}',\ve{w},\ve{z})$ are two diagrams for $(L,\ve{w},\ve{z})$. The diagrams $\cH$ and $\cH'$ are related by a sequence of the following moves:
\begin{enumerate}
\item  $\ve{\alpha}$- and $\ve{\beta}$-handleslides and isotopies;
\item (1,2)-stabilizations away from $L$;
\item isotopies of $\Sigma$ inside of $Y$ which fix $\ve{w}\cup \ve{z}$, and map $L$ to $L$.
 \end{enumerate}

 The maps corresponding to $\ve{\alpha}$- and $\ve{\beta}$-handleslides on the unstabilized diagram $\cH$ can be computed using triangle maps. For moves of the $\ve{\beta}$-curves, we simply apply Lemma \ref{lem:singlealphaquasistabtriangle} to see that the maps $S_{w,z,\cH}^{\pm}$ are invariant under $\ve{\beta}$-isotopies and handleslides. Theorem \ref{thm:doublequasitriangle} implies independence under $\ve{\alpha}$-moves of $\cH$ such that there are curves $\alpha_s$ and $\alpha_s'$ in $\Sigma$, with top graded intersection point $p\in \alpha_s'\cap \alpha_s$, such that  $(\Sigma, \ve{\alpha}'\cup \{\alpha_s'\},\ve{\alpha}\cup \{\alpha_s\},\ve{w})$ is strongly positive with respect to $p$. An arbitrary $\ve{\alpha}$-move can be realized as a sequence of such moves, and hence the maps $S_{w,z,\cH}^{\pm}$ are unchanged by handleslides and isotopies of the $\ve{\alpha}$- and $\ve{\beta}$-curves.

The maps $S_{w,z,\cH}^{\pm}$ obviously commute with the $(1,2)$-stabilization maps.

We now consider isotopies $\phi_t:Y\to Y$ which fix $\ve{w}\cup \ve{z}$ and map $L$ to $L$. We note that tautologically we have that
\[\phi_*\circ S_{w,z,\cH,p,\cJ}^{\pm}=S_{w,z,\phi_* \cH, \phi_* p, \phi_* \cJ}^{\pm}\circ \phi_*.\] Since we already know that $S_{w,z,\cH,p,\cJ}^{\pm}$ is invariant from $p$ and $\cJ$, we thus conclude that the maps $S_{w,z,\cH}^{\pm}$ and $S_{w,z,\phi_* \cH}^{\pm}$ agree.
\end{proof}

We can now write $S^{\pm}_{w,z}$ for the quasi-stabilization maps, completing the proof of Theorem \ref{thm:A}.

\begin{rem}Given that the triangle map computations in Lemma \ref{lem:singlealphaquasistabtriangle} and Theorem \ref{thm:doublequasitriangle} showed that change of diagrams maps were not only upper triangular, but diagonal, one may ask why it is natural to define $S_{w,z}^+$ by $\ve{x}\mapsto \ve{x}\times \theta^+$ and not $\ve{x}\mapsto \ve{x}\times \theta^-$. We remark that $\ve{x}\mapsto \ve{x}\times \theta^-$ is only a chain map when $w$ is given the same color as the other $\ve{w}$-basepoint adjacent to $z$, and indeed $\ve{x}\mapsto \ve{x}\times \theta^-$ is equal to $\Psi_zS_{w,z}^+$. The map $\Psi_z$ is only a chain map if the $\ve{w}$-basepoints adjacent to $z$ have the same color.
\end{rem}

\begin{rem}We have defined quasi-stabilization maps $S_{w,z}^{\pm}$ in the case that $w$ comes after $z$ and showed that such maps were invariants, i.e. that they yielded well defined maps $S_{w,z}^{\pm}$ on the coherent filtered chain homotopy type invariant. These maps were only constructed if $w$ came after $z$ on the link component. In the case that $z$ comes after $w$, we can define quasi-stabilization maps $S_{z,w}^\pm$ analogously, picking a choice of $\beta_s$. We could call such an operation $\beta$ quasi-stabilization. There is no ambiguity between $\alpha$ quasi-stabilizations or $\beta$ quasi-stabilizations because $S_{w,z}^{\pm}$ is always an $\alpha$ quasi-stabilization and $S_{z,w}^{\pm}$ is always a $\beta$ quasi-stabilization.
\end{rem}
\section{Commutatation of quasi-stabilization maps}
\label{sec:freestabcommute}
In this section we show that if $\{w,z\}\cap \{w',z'\}=\varnothing$, then the maps $S_{w,z}^{\pm}$ and $S_{w',z'}^{\pm}$ all commute. In \cite{HFplusTQFT} we showed that the free stabilization maps commute, though commutation was easier to show in that setting, since we could just pick a diagram and an almost complex structure where both free-stabilization maps could be computed, and by simply looking at the formulas, one could observe that the maps commuted. In the case of quasi-stabilization, we cannot always pick an almost complex structure which can be used to compute both maps. Nevertheless, we can compute enough components of the change of almost complex structure map to show that quasi-stabilization maps commute:

\begin{thm}\label{thm:quasistabilizationcommutes}Suppose that $(L,\ve{w},\ve{z})$ is a multibased link in $Y^3$ and that $w,z,w'$ and $z'$ are new basepoints, such that $(w,z)$ and $(w',z')$ are each pairs of adjacent basepoints on $(L,\ve{w}\cup \{w,w'\},\ve{z}\cup \{z,z'\})$. Then
\[S_{w,z}^{\circ_1} \circ S_{w',z'}^{\circ_2}\simeq  S_{w',z'}^{\circ_2}\circ S_{w,z}^{\circ_1}\] for any $\circ_1,\circ_2\in \{+,-\}$.
\end{thm}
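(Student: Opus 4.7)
The plan is to compute both compositions simultaneously on a single diagram that has been quasi-stabilized at two disjoint regions, and to derive from this a tensor-product decomposition of the underlying chain complex in which the two stabilization maps act on distinct tensor factors. I describe the case in which both $S_{w,z}^{\pm}$ and $S_{w',z'}^{\pm}$ are $\alpha$-type quasi-stabilizations; the remaining three combinations (one or both of $\beta$-type) are handled by the same argument after replacing the relevant $\alpha_s$ curves and the cylindrical $\alpha$-boundary degeneration analysis with their $\beta$-analogues.

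First I fix a diagram $\cH=(\Sigma,\ve{\alpha},\ve{\beta},\ve{w},\ve{z})$ for $(L,\ve{w},\ve{z})$ and choose disjoint quasi-stabilization data for the two pairs: points $p,p'$ in the appropriate components of $\Sigma\setminus\ve{\alpha}$, quasi-stabilization curves $\alpha_s,\alpha_s'$ chosen disjoint from one another, and gluing data $\cJ,\cJ'$ with disjoint supports in $\Sigma$. Let $\bar{\bar{\cH}}$ denote the resulting doubly stabilized diagram, obtained by simultaneously performing both special connected sums. Lemmas~\ref{lem:changeofalmostcomplexstructure}, \ref{lem:quasistabindependentofalphas}, and \ref{lem:independentofp} guarantee that the maps $S_{w,z}^{\pm}$ and $S_{w',z'}^{\pm}$ are independent of all such choices up to filtered chain homotopy, so it suffices to show commutation for one convenient choice.

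Next I carry out a two-parameter neck-stretching, considering the almost complex structures $\cJ(T)\cup \cJ'(T')$ on $\bar{\bar{\cH}}$ and sending $T,T'\to\infty$ independently. Because the two gluing regions are disjoint, any weak limit of a sequence of Maslov index one holomorphic disks decomposes into pieces living on $\cH$, on the first stabilization sphere, and on the second stabilization sphere, with the matching conditions at $p$ and $p'$ imposed independently. The Maslov-index bookkeeping and moduli-space analysis from the proof of Proposition~\ref{lem:differentialcomp} then apply separately at each neck, and after tensoring with $\cL$ I obtain an identification
\[
CFL^\infty_{UV,\cJ(T)\cup \cJ'(T')}(\bar{\bar{\cH}},\frs)\iso CFL^\infty_{UV,J_s}(\cH,\frs)\otimes C_{w,z}\otimes C_{w',z'},
\]
where $C_{w,z}=\langle\theta^+,\theta^-\rangle$ and $C_{w',z'}=\langle\theta'^+,\theta'^-\rangle$ are the standard two-generator complexes appearing in Proposition~\ref{lem:differentialcomp}. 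In this decomposition the map $S_{w,z}^{\circ_1}$ acts only on the $C_{w,z}$ tensor factor (as $\cdot\mapsto\cdot\otimes\theta^+$ if $\circ_1=+$, and as the projection $\cdot\otimes\theta^+\mapsto 0$, $\cdot\otimes\theta^-\mapsto\cdot$ if $\circ_1=-$), while $S_{w',z'}^{\circ_2}$ acts only on $C_{w',z'}$. Endomorphisms of a tensor product that act on distinct factors commute, so all four compositions $S_{w,z}^{\circ_1}\circ S_{w',z'}^{\circ_2}$ and $S_{w',z'}^{\circ_2}\circ S_{w,z}^{\circ_1}$ agree on $\bar{\bar{\cH}}$, and hence agree up to $\frP$-filtered $\Z_2[U_\frP]$-chain homotopy on the coherent filtered chain homotopy type.

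The main obstacle, as flagged in the preamble to the theorem, is the one the paper explicitly warns about: in contrast to the free stabilization setting, there is no single almost complex structure on $\bar{\bar{\cH}}$ that is manifestly ``large enough'' to compute both $S_{w,z}^{\pm}$ and $S_{w',z'}^{\pm}$ from their defining formulas, because the thresholds for ``sufficiently stretched'' depend on one another. This is circumvented by appealing to Lemma~\ref{lem:transitionmapsstabilize} and Lemma~\ref{lem:changeofalmostcomplexstructure} to identify the maps $S_{w,z}^{\pm}$ and $S_{w',z'}^{\pm}$ computed using $\cJ(T)\cup\cJ'(T')$ (with both $T$ and $T'$ large) with those computed using data stretched at only one of the two necks: the change-of-almost-complex-structure map relating these two choices is upper-triangular, with diagonal entries equal to the identity on the relevant tensor factor. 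Combining this with the tensor-product decomposition above yields the desired commutation for all $\circ_1,\circ_2\in\{+,-\}$.
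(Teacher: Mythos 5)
Your overall strategy — stretch two necks, control the change of almost complex structure between the different stretchings, and then do algebra — is the right one and is what the paper does. But there are two significant gaps in the way you have set it up.

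First, the tensor-product decomposition
\[
CFL^\infty_{UV,\cJ(T)\cup \cJ'(T')}(\bar{\bar{\cH}},\frs)\iso CFL^\infty_{UV,J_s}(\cH,\frs)\otimes C_{w,z}\otimes C_{w',z'}
\]
is not established, and one cannot simply re-run Proposition~\ref{lem:differentialcomp} ``separately at each neck.'' The Maslov index and boundary-degeneration analysis in that proof depends on how the $\alpha$-boundary degenerations interact with $\alpha_s$, and when $\alpha_s$ and $\alpha_s'$ lie in the \emph{same} component of $\Sigma\setminus\ve{\alpha}$ — which happens precisely when the pairs $(w,z)$ and $(w',z')$ are adjacent on the link — the region $A$ containing both curves supports degenerations that pass through both necks. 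The correct accounting in this case, carried out in the paper, shows that the Maslov index allows a nonzero multiplicity $a_2(\cA)$ in the middle strip between $\alpha_s$ and $\alpha_s'$, and this produces genuinely new phenomena for the doubly-stretched family. Your argument does not distinguish this case from the one where $\alpha_s$ and $\alpha_s'$ lie in different components, where the analysis really does split cleanly and the relevant change of almost complex structure map is the identity.

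Second, and relatedly, the claim you rely on in your final paragraph — that the change of almost complex structure map between $\cJ(T)\cup\cJ'(T')$ and a differently stretched structure is ``upper triangular with diagonal entries equal to the identity on the relevant tensor factor'' — is exactly the nontrivial content that needs a fresh Maslov index argument on the \emph{doubly} stabilized diagram. Lemmas~\ref{lem:transitionmapsstabilize} and~\ref{lem:changeofalmostcomplexstructure}, which you cite, concern a single quasi-stabilization and do not give this statement. The paper proves what you need as Lemma~\ref{lem:quasistabilizationcommutesI} (non-adjacent case: the map is the identity) and Lemma~\ref{lem:quasistabilizationcommutesII} (adjacent case: the map has an extra off-diagonal term $C\cdot\ve{x}\times\theta^-\times(\theta')^+$ hitting $\ve{x}\times\theta^+\times(\theta')^-$, and $C$ is \emph{not} independent of the stretching parameters). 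The presence of this $C$-term in the adjacent case means that your intermediate identification as a fixed tensor product cannot be taken at face value; one has to track the $C$-term through the matrix computation and check that it drops out of each of the four compositions. The algebra does work out — that is the content of the final two lines of the paper's proof — but it requires knowing the precise form of the change of almost complex structure map from Lemma~\ref{lem:quasistabilizationcommutesII}, which you have not supplied.
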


Pick a diagram $(\Sigma, \ve{\alpha},\ve{\beta},\ve{w},\ve{z})$ for $(L,\ve{w},\ve{z})$, and let $\alpha_s$ and $\alpha_s'$ be curves in $\Sigma\setminus \ve{\alpha}$ along which we can perform quasi-stabilization for $(w,z)$ and $(w',z')$ respectively. Let $\beta_0$ and $\beta_0'$ denote the new $\ve{\beta}$-curves. Let $\cJ$ denote gluing data for stretching along circles bounding $\beta_0$ and $\beta_0'$. There are two distinct cases to consider, corresponding to whether the pairs $(w,z)$ and $(w',z')$ are adjacent or not: either $\alpha_s$ and $\alpha_s'$ lie in the same component of $\Sigma\setminus \ve{\alpha}$ (this case corresponds to having the pair $(w,z)$ be adjacent to the pair $(w',z')$), or $\alpha_s$ and $\alpha_s'$ lie in different components of $\Sigma\setminus \ve{\alpha}$ (this case corresponds to the pair $(w,z)$ not being adjacent to $(w',z')$).

The first case is the easier to consider. In this case, we now show that we can pick an almost complex structure which computes both quasi-stabilization maps. To this end, we have the following lemma:

\begin{lem}\label{lem:quasistabilizationcommutesI} Suppose that $(\Sigma, \ve{\alpha},\ve{\beta},\ve{w},\ve{z})$ is a diagram as in the previous paragraph with new curves $\alpha_s$ and $\alpha_s'$ for quasi-stabilizing at $(w,z)$ and $(w',z')$ respectively. If $\alpha_s$ and $\alpha_s'$ are not in the same component of $\Sigma\setminus \ve{\alpha}$, then for all sufficiently large $T_1,T_1',T_2,T_2'$, we have
\[\Phi_{\cJ(T_1,T_1')\to \cJ(T_2,T_2')}\simeq \id,\] with respect to the obvious identification between the two complexes.
\end{lem}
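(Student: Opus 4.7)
The plan is to mimic the weak-limit analysis of Proposition \ref{lem:differentialcomp} and Lemma \ref{lem:transitionmapsstabilize}, but simultaneously on the two distinct quasi-stabilization regions, using the topological independence of $\alpha_s$ and $\alpha_s'$.

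First, I would set up the compactness picture. Pick a sequence of neck parameters $(T_1^{(i)},{T_1'}^{(i)},T_2^{(i)},{T_2'}^{(i)})\to(\infty,\infty,\infty,\infty)$ and consider a family of Maslov index $0$ holomorphic strips on the doubly quasi-stabilized diagram with time-dependent almost complex structure interpolating $\cJ(T_1^{(i)},{T_1'}^{(i)})$ and $\cJ(T_2^{(i)},{T_2'}^{(i)})$. A weak limit yields a broken curve consisting of: (a) a collection $U_\Sigma$ of flowlines, cylindrical $\ve{\alpha}^+$-boundary degenerations, and closed surfaces on $(\Sigma,\ve{\alpha}\cup\{\alpha_s,\alpha_s'\},\ve{\beta},\ve{w},\ve{z})$, plus (b) holomorphic strips $U_0$ on $\cH_0$ and (c) strips $U_0'$ on $\cH_0'$. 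The argument in Proposition \ref{lem:differentialcomp} showing that no nonconstant curves in $U_\Sigma$ have boundary on $\alpha_s$ or $\alpha_s'$ (apart from cylindrical $\ve{\alpha}^+$-boundary degenerations) applies verbatim in each region.

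Next, I would run the Maslov index bookkeeping. Letting $n_1,n_2,m_1,m_2$ and $n_1',n_2',m_1',m_2'$ denote the multiplicities near $w,z$ and $w',z'$ respectively, and letting $\phi_\Sigma'$ denote the underlying class of the $(\Sigma,\ve{\alpha},\ve{\beta})$-curves and $\cA$, $\cA'$ the boundary-degeneration contributions at $\alpha_s$ and $\alpha_s'$, the computation in Proposition \ref{lem:differentialcomp} (applied independently to each region, which is legitimate because $\alpha_s$ and $\alpha_s'$ lie in different components of $\Sigma\setminus\ve{\alpha}$, so there is no cross interaction among degenerations) yields
\[
0=\mu(\phi)=\mu(\phi_\Sigma')+n_1+n_2+n_1'+n_2'+m_1(\cA)+m_2(\cA)+m_1'(\cA')+m_2'(\cA')+2\!\!\sum_{\substack{\cD\in C(\Sigma\setminus\ve{\alpha})\\\alpha_s\cap\cD=\varnothing=\alpha_s'\cap\cD}}\!\!n_\cD(\cA\sqcup\cA').
\]
Each term is nonnegative, so all vanish. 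In particular $\phi_\Sigma'$ is constant, $\cA=\cA'=0$, and the multiplicities in each quasi-stabilized region are zero. Hence the only contributions to the Maslov index $0$ count come from the prematched pair $(u_\Sigma,u_0,u_0')$ where $u_\Sigma$ is constant and each of $u_0, u_0'$ is a trivial strip at $\theta^\pm$ or $\theta'^\pm$.

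Finally, a standard gluing argument (the two-parameter analog of the one in Proposition \ref{lem:differentialcomp} and Lemma \ref{lem:transitionmapsstabilize}) shows that for all sufficiently large $T_1,T_1',T_2,T_2'$, these trivial prematched configurations glue to exactly one $\cJ$-holomorphic representative apiece, so the change-of-almost-complex-structure map is $\id$ on the diagonal with a strictly upper triangular chain homotopy correction, exactly as in Lemma \ref{lem:transitionmapsstabilize} applied twice. Since the cobordism of moduli spaces interpolating between $\cJ(T_1,T_1')$ and $\cJ(T_2,T_2')$ is supported in the region where the Maslov index analysis above holds, the off-diagonal terms can be absorbed into a $\frP$-filtered $\Z_2[U_\frP]$-equivariant chain homotopy, yielding $\Phi_{\cJ(T_1,T_1')\to\cJ(T_2,T_2')}\simeq\id$. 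The main obstacle is organizing the simultaneous neck stretching rigorously, i.e.\ verifying that independently sending $T_1,T_1'\to\infty$ and $T_2,T_2'\to\infty$ at possibly different rates still produces the same limiting broken configurations; this is handled by the same diagonal subsequence trick used implicitly in the single-neck case, since the two regions are disjoint and neither affects the analysis in the other.
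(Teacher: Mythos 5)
Your proposal is correct and follows essentially the same approach as the paper: extract a weak limit under simultaneous neck stretching, decompose it into curves on $(\Sigma,\ve{\alpha},\ve{\beta})$ plus cylindrical boundary degenerations plus curves on the two $S^2$ components, and run the Maslov index bookkeeping to show every term is nonnegative and hence must vanish when $\mu(\phi)=0$. One small imprecision worth flagging: in your final paragraph you allow for a strictly upper triangular correction ``exactly as in Lemma~\ref{lem:transitionmapsstabilize} applied twice'' and then invoke a chain homotopy to discard it, but the Maslov index analysis you just did already forces \emph{all} multiplicities (including those responsible for off-diagonal terms) to vanish, since $\alpha_s$ and $\alpha_s'$ lie in distinct components of $\Sigma\setminus\ve{\alpha}$ and there is no region through which a nonconstant Maslov index $0$ class can cross-interact; so for sufficiently large necks the transition map is the identity on the nose, no homotopy correction needed — this is exactly the contrast the paper draws with Lemma~\ref{lem:quasistabilizationcommutesII}, where $\alpha_s,\alpha_s'$ share a component and a genuine off-diagonal term $C\cdot\ve{x}\times\theta^-\times(\theta')^+$ appears.
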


\begin{proof}
To show this, we will perform a computation similar to the one in Proposition \ref{lem:differentialcomp}, but for Maslov index zero disks. Let $A$ be the component of $\Sigma\setminus \ve{\alpha}$ which contains $\alpha_s$ and let $A'$ denote the component of $\Sigma\setminus \ve{\alpha}$ which contains $\alpha_s'$. By assumption $A\neq A'$. Let $A_1$ and $A_2$ denote the two components of $A\setminus \alpha_s$. Let $A_1'$ and $A_2'$ denote the components of $A'\setminus \alpha_s'$. Suppose that $\phi$ is a Maslov index 0 homology disk on the doubly quasi-stabilized diagram $(\bar{\Sigma},\ve{\alpha}\cup \{\alpha_s,\alpha_s'\},\ve{\beta}\cup \{\beta_0,\beta_0'\},\ve{w}\cup \{w,w'\},\ve{z}\cup \{z,z'\})$. Write $\phi=\phi_\Sigma\# \phi_0\# \phi_0'$ where $\phi_\Sigma$ is a homology class on $(\Sigma, \ve{\alpha}\cup \{\alpha_s,\alpha_s'\},\ve{\beta},\ve{w},\ve{z})$, $\phi_0$ is a homology class on $(S^2,\alpha_0,\beta_0,w,z)$ and $(S^2,\alpha_0',\beta_0',w,z)$. Suppose that $T_{1,n},T_{1,n}',T_{2,n},$ and $T_{2,n}'$ are sequences of neck lengths all approaching $+\infty$ and let $\hat{\cJ}_n$ denote an interpolating almost complex structures between $\cJ(T_{1,n},T_{1,n}')$ and $\cJ(T_{2,n},T_{2,n}')$. Pick $\hat{\cJ}_n$ so that as $n\to \infty$ the almost complex structures $\hat{\cJ}_n$ split into $J_s\vee J_{S^2}\vee J_{S^2}$ on $(\Sigma \vee S^2\vee S^2)\times [0,1]\times \R$. If $u_n$ is a sequence of Maslov index 0, $\hat{\cJ}_n$-holomorphic curves representing $\phi$, we can extract a weak limit to broken curves $U_\Sigma$, $U_0$ and $U_0'$  on $(\Sigma, \ve{\alpha}\cup \{\alpha_s,\alpha'_s\},\ve{\beta},\ve{w},\ve{z})$, $(S^2,\alpha_0,\beta_0)$ and $(S^2,\alpha_0',\beta_0')$ representing $\phi_\Sigma, \phi_0$ and $\phi_0'$ respectively. As in Proposition \ref{lem:differentialcomp}, the curves in $U_\Sigma$ consist of a broken holomorphic strip $U_\Sigma'$ on $(\Sigma, \ve{\alpha},\ve{\beta})$, as well as a collection $\cA$ of cylindrical $(\ve{\alpha}\cup \{\alpha_s\}\cup \{\alpha_s'\})$-boundary degenerations. Let $\phi_\Sigma'$ denote the underlying homology class of $U_\Sigma'$. Let $m_1,m_2,n_1,n_2,m_1',m_2',n_1',$ and $n_2'$ be multiplicities as in Figure \ref{fig::14}. 

\begin{figure}[ht!]
\centering
\includegraphics[scale=1.2]{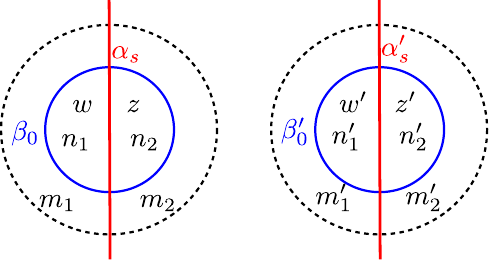}
\caption{Multiplicities of a disk $\phi$ near new basepoints $w,z,w'$ and $z'$ on a diagram which has been quasi-stabilized twice. \label{fig::14}}
\end{figure}

Adapting the Maslov index computation from Proposition \ref{lem:differentialcomp}, we see that
\[\mu(\phi)=\mu(\phi'_\Sigma)+n_1(\phi)+n_2(\phi)+n_1'(\phi)+n_2'(\phi)\]\[+m_1(\cA)+m_2(\cA)+m_1'(\cA)+m_2'(\cA)+2\sum_{\cD\in C(\Sigma\setminus \ve{\alpha}), \cD\neq A} n_{\cD}(\cA)\] where $C(\Sigma\setminus \ve{\alpha})$ denotes the connected components of $\Sigma\setminus \ve{\alpha}$.

Since $U_\Sigma'$ is a broken holomorphic curve for an $\R$-invariant almost complex structure, we conclude that $U_\Sigma'$ consists only of constant flowlines. Since all of the other summands are zero, it's easy to see that this forces all multiplicities to be zero. Hence only constant disks are counted by the change of almost complex structures map, concluding the proof of the lemma.
\end{proof}

In the case that $\alpha_s$ and $\alpha_s'$ are in the same component, the change of almost complex structure maps will often be nontrivial. Nevertheless we have the following:

\begin{lem}\label{lem:quasistabilizationcommutesII}Suppose that the pairs $(w',z')$ and $(w,z)$ are adjacent on $(L,\ve{w}\cup \{w,w'\},\ve{z}\cup \{z,z'\})$ and that $(w',z')$ comes after $(w,z)$. Let $\theta^{\pm}$ and $(\theta')^{\pm}$ denote the intersection points corresponding to quasi-stabilization. If  $T_1,T_1',T_2,T_2'$ are all sufficiently large, then writing $F=\Phi_{\cJ(T_1,T_1')\to \cJ(T_2,T_2')}$, we have

\begin{align*}F(\ve{x}\times \theta^+\times (\theta')^+)&=\ve{x}\times \theta^+\times (\theta')^+\\ F(\ve{x}\times \theta^+\times (\theta')^-)&=\ve{x}\times \theta^+\times (\theta')^-+C\cdot \ve{x}\times \theta^-\times(\theta')^+\\
F(\ve{x}\times \theta^-\times (\theta')^+)&=\ve{x}\times \theta^-\times(\theta')^+ \\
F(\ve{x}\times\theta^-\times(\theta')^-)&=\ve{x}\times\theta^-\times(\theta')^-,
\end{align*}
for some $C$ (which is not independent of $T_i$ and $T_i'$). 
\end{lem}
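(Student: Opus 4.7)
The plan is to extend the weak-limit and Maslov-index analysis of Proposition \ref{lem:differentialcomp} and Lemma \ref{lem:quasistabilizationcommutesI} to the case in which both quasi-stabilization curves $\alpha_s$ and $\alpha_s'$ lie in the same component $A$ of $\Sigma\setminus\ve{\alpha}$ (which is forced here by the adjacency of the pairs $(w,z)$ and $(w',z')$). First, choose sequences $T_{1,n},T_{1,n}',T_{2,n},T_{2,n}'\to\infty$ and interpolating almost complex structures $\hat{\cJ}_n$ between $\cJ(T_{1,n},T_{1,n}')$ and $\cJ(T_{2,n},T_{2,n}')$ that split as $J_s\vee J_{S^2}\vee J_{S^2}$ on $(\Sigma\vee S^2\vee S^2)\times[0,1]\times\R$. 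Given a homology class $\phi=\phi_\Sigma\#\phi_0\#\phi_0'$ with $\mu(\phi)=0$ and a sequence of $\hat{\cJ}_n$-holomorphic representatives $u_n$, extract a weak Gromov limit into broken curves $U_\Sigma,U_0,U_0'$ representing $\phi_\Sigma,\phi_0,\phi_0'$, and further decompose $U_\Sigma$ into a broken strip $U_\Sigma'$ on $(\Sigma,\ve{\alpha},\ve{\beta})$ together with a collection $\cA$ of cylindrical $(\ve{\alpha}\cup\{\alpha_s,\alpha_s'\})$-boundary degenerations.

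Next, I would assemble the Maslov index formula
\[\mu(\phi)=\mu(\phi_\Sigma')+n_1(\phi)+n_2(\phi)+n_1'(\phi)+n_2'(\phi)+m_1(\cA)+m_2(\cA)+m_1'(\cA)+m_2'(\cA)+2\!\!\sum_{\substack{\cD\in C(\Sigma\setminus\ve{\alpha})\\ \cD\cap(\alpha_s\cup\alpha_s')=\varnothing}}\!\!n_\cD(\cA),\]
and impose $\mu(\phi)=0$. The crucial difference from Lemma \ref{lem:quasistabilizationcommutesI} is that, since $\alpha_s$ and $\alpha_s'$ now share the region $A$, cylindrical boundary degenerations with domain $A$ are not penalized by the rightmost sum, so $\cA$ may be nontrivial. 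Combining the constraint with Theorem \ref{thm:boundarydegencount} (which forces the $\ve{\alpha}$-degeneration to be the Maslov index~$2$ one supported on $A$) together with the matching conditions at the two inner punctures $p_0^-$ and $(p_0')^-$ shows that any nonzero $\cA$ must pair with simultaneous $\beta_0$- and $\beta_0'$-boundary degenerations on the two sphere pieces.

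The vertex multiplicity relations at the $p_0^{\pm}$ and $(p_0')^{\pm}$ intersection points, combined with the vanishing of all $n_i$ and $n_i'$, then single out the only compatible pair of generators: the input $\theta^+\times(\theta')^-$ and the output $\theta^-\times(\theta')^+$. The absence of a $\theta^-\times(\theta')^+\to\theta^+\times(\theta')^-$ term will follow from the ordering of basepoints on $L$: since $(w',z')$ comes after $(w,z)$, only one sign of the sphere-$\Sigma$-sphere matching through $A$ is consistent with a nonnegative domain on the doubly quasi-stabilized diagram. Finally, to verify that the off-diagonal coefficient $C$ is actually nonzero, I would run a gluing argument modeled on the end of Proposition \ref{lem:differentialcomp}: the matched configuration consisting of the $\ve{\alpha}$-boundary degeneration on $\Sigma$ together with the two $\beta_0$- and $\beta_0'$-boundary degenerations can be glued back up for all sufficiently large neck parameters, giving a nontrivial count. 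The main obstacle is the bookkeeping of matched divisors across three pieces, but since the subsequent applications only rely on the strict upper-triangular structure of $F$ and not on the explicit value of $C$ (which genuinely depends on the neck parameters), we need not track $C$ precisely.
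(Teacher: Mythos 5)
Your overall strategy (weak limits, boundary degenerations, a combinatorial Maslov index formula forcing constraints) matches the paper's, but the Maslov index formula you wrote down is not correct for the shared-region case, and it is in fact inconsistent with your own subsequent claim. You lifted the formula from the disjoint case (Lemma \ref{lem:quasistabilizationcommutesI}) almost verbatim, keeping the four terms $m_1(\cA)+m_2(\cA)+m_1'(\cA)+m_2'(\cA)$. When $\alpha_s$ and $\alpha_s'$ lie in the same component $A$ of $\Sigma\setminus\ve{\alpha}$, these four multiplicities are not independent: with $A\setminus(\alpha_s\cup\alpha_s')=A_1\sqcup A_2\sqcup A_3$ and $A_2$ the middle piece, the two ``inner'' multiplicities $m_2(\cA)$ and $m_1'(\cA)$ both record $a_2(\cA)$. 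Thus your penalty term equals $a_1(\cA)+2a_2(\cA)+a_3(\cA)$, which forces $a_2(\cA)=0$ when $\mu(\phi)=0$. That would kill the very boundary degenerations you then rely on for the off-diagonal term, so by your own formula you would conclude $F=\id$, contradicting the statement. The correct formula (which is what the paper derives) has the penalty $a_1(\cA)+a_3(\cA)$ only, with $a_2(\cA)$ genuinely unconstrained; this is exactly what lets a boundary degeneration supported on $A_2$ survive, and it is what constrains $\phi$ to lie in $\pi_2(\ve{x}\times\theta^+\times(\theta')^-,\ve{x}\times\theta^-\times(\theta')^+)$. You need to re-derive the index formula from scratch in the shared-region configuration rather than copying the disjoint-case formula with a tweaked summation condition; the observation that boundary degenerations with domain $A_2$ escape your rightmost sum does not help when those same multiplicities are already being counted (twice) in the explicit $m_i,m_i'$ terms.

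Two smaller remarks. First, your closing gluing argument to show $C\neq 0$ is unnecessary: the lemma only asserts $F$ has the stated upper-triangular form ``for some $C$,'' and neither the paper nor any subsequent application needs $C$ to be nonzero. Second, the claim that the absence of a $\theta^-\times(\theta')^+\to\theta^+\times(\theta')^-$ term ``will follow from the ordering of basepoints'' is stated but not actually argued; what you want is simply that the vanishing of all $n_i,n_i'$ and positivity of the domain leave only one compatible homology class, namely one from $\theta^+\times(\theta')^-$ to $\theta^-\times(\theta')^+$, which is how the paper concludes.
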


\begin{proof}We proceed similar to the previous lemma. Now a single component, which we denote by $A$, of $\Sigma\setminus \ve{\alpha}$ contains both $\alpha_s$ and $\alpha_s'$. Write $A_1,A_2$ and $A_3$ for the three different components of $A\setminus (\alpha_s\cup \alpha_s')$. Two of the $A_i$ share boundary with exactly one of the other $A_j$, and one of the $A_i$ shares boundary with both of the other $A_i$. Without loss of generality assume that $A_1$ shares boundary with $A_2$, and that $A_2$ also shares boundary with $A_3$.

As before, as we simultaneously stretch the necks, a sequence of Maslov index zero disks $u_i$ has a weak limit as before. Now, however, the Maslov index computation is different. Let $a_i(\cA)$ denote the multiplicity of the $(\ve{\alpha}\cup \{\alpha_s\}\cup \{\alpha_s'\})$-degeneration $\cA$ in the region $A_i$. One computes that the Maslov index now satisfies

\[\mu(\phi)=\mu(\phi'_\Sigma)+n_1(\phi)+n_2(\phi)+n_1'(\phi)+n_2'(\phi)+a_1(\cA)+a_3(\cA)+2\sum_{\cD\in C(\Sigma\setminus \ve{\alpha}), \cD\neq A} n_{\cD}(\cA).\] As usual this implies that all of the above terms are zero. Hence $\phi_\Sigma'$, which has a broken representative for a cylindrical almost complex structure, must be the constant disk by transversality. The only multiplicities which may be nonzero are $a_2(\cA)$, and the multiplicities $m_i(\phi)$ and $m_i'(\phi)$, none of which appear in the above sum. As is easily observed, this constrains the disk $\phi$ to be in $\pi_2(\ve{x}\times \theta^+\times (\theta')^-,\ve{x}\times \theta^-\times (\theta')^+)$, completing the proof. An example of a disk which might appear in the change of almost complex structure map is shown in Figure \ref{fig::13}.
\end{proof}

\begin{figure}[ht!]
\centering
\includegraphics[scale=1.2]{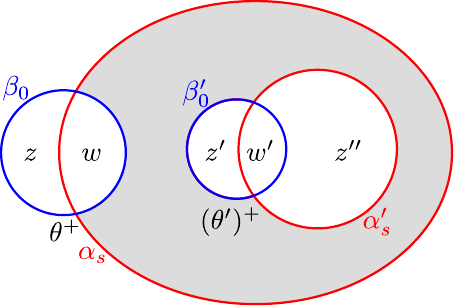}
\caption{An example of a Maslov index zero disk which might be counted by $\Phi_{\cJ(T_1,T_1')\to \cJ(T_2,T_2')}$ in Lemma \ref{lem:quasistabilizationcommutesII} for arbitrarily large $T_i$ and $T_i'$.} \label{fig::13}
\end{figure}

\begin{proof}[Proof of Theorem \ref{thm:quasistabilizationcommutes}] The proof is easy algebra in all cases using Lemmas \ref{lem:quasistabilizationcommutesI} and \ref{lem:quasistabilizationcommutesII}.
\end{proof} 

\section{Further relations between the maps \texorpdfstring{$\Psi_z,\Phi_w$}{Psi\_z, Phi\_w} and \texorpdfstring{$S^{\pm}_{w,z}$}{S\^{}pm\_ w,z}}
\label{sec:furtherrelations}
In this section we prove several relations between the maps $S_{w,z}^{\pm}$, $\Psi_z$ and $\Phi_w$. We highlight the convenience of viewing $\Phi_w$ and $\Psi_z$ as formal derivatives of the differential, since basically all of the relations in this section are derived by either formally differentiating the expression for $\d\circ \d$ from Lemma \ref{lem:del^2=}, or by differentiating our expression of the quasi-stabilized differential in Proposition \ref{lem:differentialcomp}.

\begin{lem}\label{lem:commutatorsofpsiandphi}If $w$ and $z$ are not adjacent, or if $w$ and $z$ are the only basepoints on a link component, then \[\Phi_w\Psi_z+\Psi_z\Phi_w\simeq 0.\] If $w$ and $z$ are adjacent and there are other basepoints on the link component, then \[\Phi_w\Psi_z+\Psi_z\Phi_w\simeq 1.\]
\end{lem}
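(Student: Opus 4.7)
The plan is to work on the uncolored complex $CFL_{UV,0}^\infty$ (before tensoring with $\cL$), where the operators are literally formal partial derivatives $\Phi_w = \frac{d}{dU_w}\d$ and $\Psi_z = \frac{d}{dV_z}\d$, and where the ordinary Leibniz rule applies to compositions. Every identity I produce there will descend unchanged to $CFL_{UV}^\infty$ after tensoring with $\cL$, since $\Phi_w$ and $\Psi_z$ are defined by the same formulas on both complexes.

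Applying $\frac{d}{dU_w}\frac{d}{dV_z}$ to the composition $\d^2 = \d\circ\d$ and using the Leibniz rule twice yields, on $CFL_{UV,0}^\infty$, the identity
\[
\frac{d}{dU_w}\frac{d}{dV_z}(\d^2) = \Phi_w\Psi_z + \Psi_z\Phi_w + H\d + \d H,
\]
where $H \defeq \frac{d}{dU_w}\frac{d}{dV_z}\d$. On the other hand, the left-hand side can be evaluated directly from the closed-form expression for $\d^2$ in Lemma \ref{lem:del^2=}: writing $z = z_{K,j}$, the only monomials in the cyclic sum containing $V_{z_{K,j}}$ are $U_{w_{K,j}}V_{z_{K,j}}$ and $V_{z_{K,j}}U_{w_{K,j+1}}$ (indices mod $n_K$), so $\frac{d}{dV_z}(\d^2) = U_{w_{K,j}} + U_{w_{K,j+1}}$, and a further $\frac{d}{dU_w}$ produces the Iverson-type expression $[w = w_{K,j}] + [w = w_{K,j+1}] \in \Z_2$.

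A short case analysis of this expression matches the three situations described in the lemma: it vanishes when $w$ is not one of the two $\ve{w}$-basepoints adjacent to $z$; it equals $1$ when $w$ is one of two \emph{distinct} adjacent basepoints, which happens exactly when $n_K \ge 2$; and in the degenerate case $n_K = 1$, where $w_{K,1}=w_{K,2}=w$ and the indicator fires from both sides, the two contributions cancel modulo $2$. Tensoring the identity with $\cL$ then gives
\[
\Phi_w\Psi_z + \Psi_z\Phi_w = c + H\d + \d H
\]
on $CFL_{UV}^\infty$, with $c\in\{0,1\}$ as claimed, exhibiting $H$ as the required chain homotopy. The only remaining verification is that $H$ is $\frP$-filtered and $\Z_2[U_\frP]$-equivariant, which follows exactly as in Lemma \ref{lem:PhiPsiinvaraints}: formal differentiation of $\d$ with respect to any $U_w$ or $V_z$ preserves the property of admitting a decomposition with only nonnegative powers of the $U_{\ve{w}}$ and $V_{\ve{z}}$ variables, and the procedure does not touch the $U_\frP$-variables. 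The only step with any real content is the bookkeeping in the $n_K=1$ case, where one must read the last term of the cyclic sum in Lemma \ref{lem:del^2=} correctly so as to see the doubled contribution.
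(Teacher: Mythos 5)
Your proof is correct and takes essentially the same route as the paper: both arguments formally differentiate the expression for $\d^2$ from Lemma \ref{lem:del^2=} once in $U_w$ and once in $V_z$ (on the uncolored complex, before tensoring with $\cL$), with the mixed second derivative of $\d$ serving as the chain homotopy, and then read off the constant $0$ or $1$ by a case analysis on adjacency. The only differences are cosmetic: you reverse the order of differentiation and spell out the $\frP$-filtered equivariance of $H$ and the $n_K=1$ cancellation slightly more explicitly than the paper does.
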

\begin{proof}Take the expression for $\d^2$ from Lemma \ref{lem:del^2=} and differentiate it with respect to $U_w$. We obtain
\[\d \Phi_w+\Phi_w\d=V_{z'}+V_{z''}\] where $z'$ and $z''$ are the variables adjacent to $w$ on its link component. Suppose first that $z'\neq z''$, i.e. that $w$ and $z$ are not the only basepoints on their link component.

Differentiating the above expression with respect to $V_z$, we see that
\[\Psi_z\Phi_w+\Phi_w\Psi_z\simeq \begin{cases}1& \text{ if } w \text{ is adjacent  to } z\\
0& \text{ if } w \text{ is not adjacent to } z.
\end{cases},\] from which the claim follows as long as $w$ and $z$ are not the only basepoints on their link component.

If $w$ and $z$ are the only basepoints on their link component, then $z=z'=z''$ and the above argument is easily modified to give the stated result.
\end{proof}

\begin{lem}\label{lem:somemorecommutators} We have
\[\Psi_z\Psi_{z'}+\Psi_{z'}\Psi_z\simeq 0\] and
\[\Phi_w\Phi_{w'}+\Phi_{w'}\Phi_w\simeq 0\] for any choice of $z,z',w,$ and $w'$.
\end{lem}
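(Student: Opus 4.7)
The plan is to mimic the strategy of Lemma \ref{lem:commutatorsofpsiandphi}: view $\Phi_w$ and $\Psi_z$ as formal partial derivatives of $\d$ and iterate by differentiating once more. I would work on the uncolored complex $CFL_{UV,0}^\infty(\cH,\frs)$ (before tensoring with $\cL$), where the Leibniz rule applies cleanly to the formal variables. The crucial observation is that in the formula for $\d^2$ given by Lemma \ref{lem:del^2=}, every monomial is the product of exactly one $U$-variable and one $V$-variable; in particular no monomial is divisible by $V_{z}V_{z'}$ or by $U_{w}U_{w'}$. Consequently the mixed second partials vanish identically:
\[\frac{d^2(\d^2)}{dV_{z'}\,dV_z} = 0 \qquad\text{and}\qquad \frac{d^2(\d^2)}{dU_{w'}\,dU_w} = 0.\]

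Next, I would expand the first identity by applying the Leibniz rule to $\d\circ\d$ twice. Setting $H_{z,z'} := \frac{d^2\d}{dV_{z'}\,dV_z}$, this yields the equality
\[H_{z,z'}\d + \Psi_z\Psi_{z'} + \Psi_{z'}\Psi_z + \d H_{z,z'} = 0\]
on $CFL_{UV,0}^\infty(\cH,\frs)$; tensoring with $\cL$ preserves this equation, giving $\Psi_z\Psi_{z'} + \Psi_{z'}\Psi_z = [\d, H_{z,z'}]$ on $CFL_{UV}^\infty(\cH,\frs)$. The parallel computation with the $U$-variables produces a chain homotopy $H_{w,w'} := \frac{d^2\d}{dU_{w'}\,dU_w}$ witnessing $\Phi_w\Phi_{w'} + \Phi_{w'}\Phi_w \simeq 0$.

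The last step is to promote these identities to the colored complex $CFL_{UV}^\infty(Y,\bL,\sigma,\frP,\frs)$ and verify that $H_{z,z'}$ and $H_{w,w'}$ satisfy the required $\frP$-filtration and $\Z_2[U_\frP]$-equivariance conditions. The filtered property follows as in Lemma \ref{lem:PhiPsiinvaraints}: since $\d$ is a polynomial in the formal variables with only nonnegative exponents (the $V_z^{-1}$ in the definition of $\Psi_z$ is absorbed by the factor $n_z(\phi)\geq 1$ in the nonzero summands), the same is true of its iterated partial derivatives. Equivariance under $\Z_2[U_\frP]$ is automatic, because formal differentiation with respect to the $V_{\ve z}$ or $U_{\ve w}$ variables commutes with the $U_\frP$-action introduced by the coloring. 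I do not anticipate any serious obstacle here — the combinatorial vanishing of the mixed second partials of $\d^2$ is exactly what makes the proof purely formal, once the framework of Lemmas \ref{lem:del^2=} and \ref{lem:PhiPsicommutewithchangeofdiagrams} is in place.
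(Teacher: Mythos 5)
Your proof is correct and matches the paper's approach: the paper's one-line proof ("proven identically to the previous lemma") means exactly what you carried out — differentiate the expression for $\d^2$ from Lemma \ref{lem:del^2=} twice with respect to $V_{z'}, V_z$ (resp.\ $U_{w'}, U_w$), observe that the mixed second partial vanishes because each monomial in $\d^2$ is linear in the $V$- and $U$-variables, and read off $H_{z,z'}=\tfrac{d^2\d}{dV_{z'}\,dV_z}$ as the $\frP$-filtered, $\Z_2[U_\frP]$-equivariant chain homotopy. Your explicit Leibniz expansion and the remarks on filteredness/equivariance are precisely the details the paper leaves implicit.
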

\begin{proof}This is proven identically to the previous lemma.
\end{proof}

As with the free stabilization maps in \cite{HFplusTQFT}, we have the following:

\begin{lem}\label{lem:del'appears} The following relation holds: \[S_{w,z}^+ S_{w,z}^-=\Phi_w.\]
\end{lem}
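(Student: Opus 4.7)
The plan is to verify the identity strictly on a single diagram and then invoke invariance of both sides. First I would fix a Heegaard diagram $\cH$ for $(Y,L,\ve{w},\ve{z})$ together with a point $p$, a curve $\alpha_s$, and gluing data $\cJ$ with neck length $T$ large enough that the maps $S_{w,z}^{\pm}$ on $\bar{\cH}_{p,\alpha_s}$ are given by the explicit formulas from Section \ref{sec:prelimquasistab}. These formulas immediately make $S_{w,z}^+ S_{w,z}^-$ into the $\frP$-filtered $\Z_2[U_\frP]$-module endomorphism that sends $\ve{x}\times\theta^+\mapsto 0$ and $\ve{x}\times\theta^-\mapsto \ve{x}\times\theta^+$.

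The key computational input is Proposition \ref{lem:differentialcomp}, which gives the quasi-stabilized differential in explicit matrix form
\[
\d = \begin{pmatrix} \d_{\cH,J_s} & U_w + U_{w'} \\ V_z + V_{z'} & \d_{\cH,J_s} \end{pmatrix},
\]
with the first row and column corresponding to the $\theta^+$ summand and the second to the $\theta^-$ summand. Recalling that $\Phi_w$ is the formal derivative $(d/dU_w)\d$ computed in the uncolored complex $CFL_{UV,0}^\infty$ (where $U_w$ and $U_{w'}$ are treated as independent variables) and then pushed down to the colored complex, and noting that $U_w$ appears in the matrix above only in the $(1,2)$ entry with coefficient $1$, I would obtain the matrix $\bigl(\begin{smallmatrix}0&1\\0&0\end{smallmatrix}\bigr)$ for $\Phi_w$ as well. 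Thus $\Phi_w$ and $S_{w,z}^+ S_{w,z}^-$ agree literally as $\frP$-filtered $\Z_2[U_\frP]$-module endomorphisms on this diagram.

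Finally, both sides are well-defined on the coherent filtered chain homotopy type up to $\frP$-filtered $\Z_2[U_\frP]$-chain homotopy—by Theorem \ref{thm:A} for the quasi-stabilization maps and by Lemma \ref{lem:PhiPsiinvaraints} for $\Phi_w$—so the strict equality on a single diagram promotes to the stated relation on the entire invariant. I do not anticipate a serious obstacle; the only subtlety worth flagging is that the derivative $d/dU_w$ must be taken before the coloring identifies variables sharing a common color, so that the $U_{w'}$ summand in the $(1,2)$ entry of $\d$ does not contribute to $\Phi_w$ even when $\sigma(w)=\sigma(w')$.
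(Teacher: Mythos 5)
Your proposal is correct and follows the paper's own argument: compute the quasi-stabilized differential via Proposition \ref{lem:differentialcomp}, differentiate with respect to $U_w$ (before tensoring with $\cC_{\sigma',\frP}$) to identify $\Phi_w$ with the matrix $\bigl(\begin{smallmatrix}0&1\\0&0\end{smallmatrix}\bigr)$, and observe that this is $S_{w,z}^+ S_{w,z}^-$ in the same matrix notation. Your flag about taking $d/dU_w$ in $CFL_{UV,0}^\infty$ rather than after coloring is a worthwhile point the paper leaves implicit, but otherwise the two arguments coincide.
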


\begin{proof}The differential on the uncolored quasi-stabilized diagram takes the form
\[\d_{\bar{\cH}}=\begin{pmatrix}\d_\cH & U_w+U_{w'}\\
V_z+V_{z'}& \d_\cH
\end{pmatrix}\] where $\d_\cH$ is the differential on the unstabilized diagram. After taking the $U_w$ derivative we get 
\[\Phi_w=\begin{pmatrix}0& 1\\
0& 0\end{pmatrix},\] which is exactly $S_{w,z}^+ S_{w,z}^-$.
\end{proof}

We now consider commutators of the quasi-stabilization maps and the maps $\Phi_w$ and $\Psi_z$.

\begin{lem}\label{Spm-Psicommutatornotadjacent}Suppose that $w$ and $z$ are two new basepoints for a link. If $w$ is not adjacent to $z'$, then
\[S^{\pm}_{w,z} \Psi_{z'}+\Psi_{z'} S^{\pm}_{w,z}\simeq 0.\] With no assumptions on adjacency, we have
\[S^{\pm}_{w,z}\Phi_{w'}+\Phi_{w'} S^{\pm}_{w,z}\simeq 0.\]
\end{lem}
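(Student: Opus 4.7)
The plan is to do a direct matrix computation on a single quasi-stabilized diagram, exploiting the fact that $\Phi_{w'}$ and $\Psi_{z'}$ are formal derivatives of the differential. First I would fix a Heegaard diagram $\cH$ for $\bL$, gluing data $\cJ$, and a neck length $T$ large enough for $\cJ(T)$ to compute $S_{w,z}^\pm$ via the standard formulas $S_{w,z}^+(\ve x)=\ve x\times \theta^+$ and $S_{w,z}^-(\ve x\times \theta^+)=0$, $S_{w,z}^-(\ve x\times \theta^-)=\ve x$. With respect to the decomposition of generators of $\bar\cH$ according to their $\theta^+$ or $\theta^-$ factor, $S_{w,z}^+$ and $S_{w,z}^-$ are then represented by the column vector $\begin{pmatrix}1\\0\end{pmatrix}$ and the row vector $\begin{pmatrix}0&1\end{pmatrix}$, respectively.

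Next I would apply Proposition~\ref{lem:differentialcomp} to write the uncolored differential on $\bar\cH$ as
\[\d_{\bar\cH,\cJ(T)}=\begin{pmatrix}\d_{\cH,J_s}&U_w+U_{w_0}\\V_z+V_{z_0}&\d_{\cH,J_s}\end{pmatrix},\]
where $w_0$ and $z_0$ denote the basepoints of $\bL$ adjacent in $\bar\bL$ to $z$ and to $w$, respectively. Since $\Phi_{w'}$ and $\Psi_{z'}$ are defined as $\tfrac{d}{dU_{w'}}\d$ and $\tfrac{d}{dV_{z'}}\d$ on the uncolored complex $CFL_{UV,0}^\infty$, I can differentiate entry-wise to obtain
\[\Phi_{w'}^{\bar\cH}=\begin{pmatrix}\Phi_{w'}^{\cH}&\epsilon\\0&\Phi_{w'}^{\cH}\end{pmatrix},\qquad \Psi_{z'}^{\bar\cH}=\begin{pmatrix}\Psi_{z'}^{\cH}&0\\\delta&\Psi_{z'}^{\cH}\end{pmatrix},\]
where $\epsilon=1$ if $w'=w_0$ and $0$ otherwise, and $\delta=1$ if $z'=z_0$ and $0$ otherwise.

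Under the hypothesis that $w$ is not adjacent to $z'$, we have $z'\ne z_0$, so $\delta=0$ and $\Psi_{z'}^{\bar\cH}$ is block diagonal; a single line of matrix arithmetic then shows $S_{w,z}^{\pm}\Psi_{z'}=\Psi_{z'}S_{w,z}^{\pm}$ on the nose. For the second relation, the off-diagonal correction $\epsilon$ in $\Phi_{w'}^{\bar\cH}$ sits in the top-right corner, which is annihilated both by left-multiplication by $\begin{pmatrix}0&1\end{pmatrix}$ and by right-multiplication by $\begin{pmatrix}1\\0\end{pmatrix}$; hence $S_{w,z}^{\pm}\Phi_{w'}=\Phi_{w'}S_{w,z}^{\pm}$ holds on the nose regardless of whether $w'=w_0$. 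In both cases the identities descend to the colored complex after tensoring with $\cL$, giving a $\frP$-filtered $\Z_2[U_\frP]$-equivariant equality, which is stronger than the asserted chain homotopy.

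No serious obstacle is expected: once Proposition~\ref{lem:differentialcomp} is in hand the entire argument is formal differentiation and matrix arithmetic. The one conceptual point worth highlighting is the asymmetry between the two statements, which is forced by the asymmetric placement of the entries in $\d_{\bar\cH}$: the $U$-correction lives in the top-right, where $S_{w,z}^{\pm}$ kill it unconditionally, whereas the $V$-correction lives in the bottom-left, where it is instead selected by $S_{w,z}^{\pm}$ and must therefore be eliminated by the adjacency hypothesis on $z'$.
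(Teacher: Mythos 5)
Your argument is correct and is the same as the paper's: fix a quasi-stabilized diagram with a sufficiently stretched almost complex structure, write $\d_{\bar\cH}$ in $2\times 2$ block form via Proposition~\ref{lem:differentialcomp}, differentiate entrywise to get block-matrix formulas for $\tilde\Psi_{z'}$ and $\tilde\Phi_{w'}$, and multiply against the column/row vector representations of $S_{w,z}^{\pm}$. You are somewhat more explicit than the paper about the $\Phi_{w'}$ case (observing that the possible off-diagonal correction sits in the top-right corner, which is annihilated by both $S_{w,z}^+$ and $S_{w,z}^-$), but the reasoning is identical.
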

\begin{proof}Suppose that $w$ and $z$ are inserted between basepoints $z''$ and $w''$ on the link $\bL$. The quasi-stabilized differential takes the form

\[\d_{\bar{\cH}}=\begin{pmatrix}\d_{\cH} & U_w+U_{w''}\\
V_z+V_{z''}& \d_{\cH}
\end{pmatrix},\] by Proposition \ref{lem:differentialcomp}. By assumption $z'\neq z''$. Differentiating with respect to $V_{z'}$ thus yields
\[\tilde{\Psi}_{z'}=\begin{pmatrix} \Psi_{z'}& 0\\
0& \Psi_{z'}
\end{pmatrix},\] where $\tilde{\Psi}_{z'}$ denotes the map on the stabilized diagram, and $\Psi_{z'}$ denotes the map on the unstabilized diagram. In matrix notation the maps $S^{\pm}_{w,z}$ take the form 
\begin{equation}S_{w,z}^+=\begin{pmatrix}1\\0
\end{pmatrix}\qquad \text{ and } \qquad S_{w,z}^-=\begin{pmatrix}0& 1\end{pmatrix}.\label{eq:S^pmmatrixnotation}
\end{equation} The stated equality involving $\Psi_{z'}$ now follows from matrix multiplication.

The equality involving $\Phi_{w'}$ follows similarly.
\end{proof}

We also have the following:

\begin{lem}\label{SpmPsicommutatoradjacent}Suppose that $z'$ is adjacent to $w$ and that $z'\neq z$. Then we have
\[S_{w,z}^+ \Psi_{z'}\simeq (\Psi_{z'}+\Psi_z)S^+_{w,z}\] and
\[\Psi_{z'}S^-_{w,z}\simeq S_{w,z}^-(\Psi_{z'}+\Psi_z).\]
\end{lem}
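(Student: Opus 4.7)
The plan is to adapt the matrix calculation strategy used in Lemma \ref{Spm-Psicommutatornotadjacent}, but now keeping careful track of off-diagonal contributions that arise precisely because $z'$ is adjacent to $w$. First, I would choose a Heegaard diagram $\cH$ for the link and perform the quasi-stabilization at the pair $(w,z)$ by inserting the new basepoints between $z'$ and $w'$ (so that $z'$ appears immediately after $w$ along $L$). By Proposition \ref{lem:differentialcomp}, for sufficiently large neck length the quasi-stabilized uncolored differential takes the matrix form
\[
\d_{\bar{\cH}} = \begin{pmatrix} \d_\cH & U_w+U_{w'} \\ V_z+V_{z'} & \d_\cH \end{pmatrix},
\]
with rows/columns indexed by $\theta^+,\theta^-$.

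Next I would compute the quasi-stabilized maps $\tilde{\Psi}_{z'}$ and $\tilde{\Psi}_z$ by formally differentiating $\d_{\bar\cH}$ with respect to $V_{z'}$ and $V_z$ respectively, working on $CFL_{UV,0}^\infty$ before tensoring with $\cL$ (as in the definition of $\Psi_z$ in Section \ref{sec:PhiPsi}). Since $z$ does not appear on $\cH$, the derivative $\frac{d}{dV_z}\d_\cH=0$, while $\frac{d}{dV_{z'}}\d_\cH=\Psi_{z'}$ (the unstabilized map). The essential point is that the off-diagonal entry $V_z+V_{z'}$ now contributes a $1$ to \emph{both} derivatives, yielding
\[
\tilde{\Psi}_{z'} = \begin{pmatrix} \Psi_{z'} & 0 \\ 1 & \Psi_{z'} \end{pmatrix}, \qquad
\tilde{\Psi}_z = \begin{pmatrix} 0 & 0 \\ 1 & 0 \end{pmatrix}.
\]
Since we work over $\Z_2$, these off-diagonal $1$'s cancel upon summation, giving $\tilde{\Psi}_{z'}+\tilde{\Psi}_z = \Psi_{z'}\cdot I$ as a diagonal matrix.

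Finally I would use the matrix expressions \eqref{eq:S^pmmatrixnotation} for $S^\pm_{w,z}$ and multiply out both sides. For the first identity, $(\tilde{\Psi}_{z'}+\tilde{\Psi}_z)S^+_{w,z}=\bigl(\begin{smallmatrix}\Psi_{z'}\\0\end{smallmatrix}\bigr)=S^+_{w,z}\Psi_{z'}$; the second identity follows from the analogous row-vector computation. Both equalities hold on the nose for the preferred diagram and almost complex structure, and the $\frP$-filtered $\Z_2[U_\frP]$-chain homotopy statement then follows by extending via change-of-diagram maps in the usual way.

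The main obstacle is really just bookkeeping rather than any genuine difficulty: one must verify that the off-diagonal contribution from differentiating $V_z+V_{z'}$ is correctly accounted for (in contrast to Lemma \ref{Spm-Psicommutatornotadjacent}, where the analogous term vanishes because $z'$ is not adjacent to $w$). The appearance of $\Psi_z$ on the right-hand side is exactly what is needed to kill this off-diagonal entry, which explains why the corrected identity, rather than a simple commutation, is the correct statement in the adjacent case.
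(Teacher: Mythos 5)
Your proposal is correct and follows essentially the same route as the paper: differentiate the quasi-stabilized differential from Proposition \ref{lem:differentialcomp} with respect to $V_{z'}$ and $V_z$ to obtain the matrix forms of $\tilde{\Psi}_{z'}$ and $\tilde{\Psi}_z$, then multiply against the matrix forms of $S^\pm_{w,z}$ from Equation \eqref{eq:S^pmmatrixnotation}. The extra remark that the two off-diagonal $1$'s cancel over $\Z_2$ so that $\tilde{\Psi}_{z'}+\tilde{\Psi}_z$ is diagonal is a nice way to see why $\Psi_z$ must appear on the right-hand side, but it is the same computation.
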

\begin{proof}Once again we consider the quasi-stabilized differential, which is
\[\d_{\bar{\cH}}=\begin{pmatrix}\d_\cH & U_w+U_{w'}\\
V_z+V_{z'}& \d_\cH
\end{pmatrix}.\] Differentiating with respect to $z'$ yields
\[\tilde{\Psi}_{z'}=\begin{pmatrix}\Psi_{z'}&0\\
1& \Psi_{z'}\end{pmatrix}\] and
\[\tilde{\Psi}_z=\begin{pmatrix}0&0\\
1& 0
\end{pmatrix}.\] Here $\tilde{\Psi}_z$ denotes the map on the complex after quasi-stabilization, and $\Psi_z$ denotes the map on the complex before quasi-stabilization. Using the matrix notation from Equation \eqref{eq:S^pmmatrixnotation}, the desired relations follow from matrix multiplication.
\end{proof}

 The reader can compare the following lemma to \cite{HFplusTQFT}*{Lem. 7.7}, the analogous result for the closed three manifold invariants.

\begin{lem}\label{lem:addtrivialstrand} Suppose that $\bL=(L,\ve{w},\ve{z})$ is a multibased link in $Y^3$, such that $w$ and $z$ are two new, consecutive basepoints on $\bL$ such that $w$ follows $z$. If $z'$ is one of the two $\ve{z}$-basepoints adjacent to $w$, then we have \[S^-_{w,z} \Psi_{z'} S^+_{w,z}\simeq 1.\] 
\end{lem}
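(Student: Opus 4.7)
The plan is to compute all three maps in a single matrix model on the quasi-stabilized Heegaard diagram and then read off the composition. We pick a Heegaard diagram $\cH$ for $(Y,L,\ve{w},\ve{z})$ and a curve $\alpha_s$ and gluing data $\cJ$ as in Section \ref{sec:prelimquasistab} for quasi-stabilizing at the pair $(w,z)$. Since $w$ comes after $z$ and $z'$ is the $\ve{z}$-basepoint adjacent to $w$, the ordering of basepoints on that component of $\bL$ is $\ldots,w',z,w,z',\ldots$, so Proposition \ref{lem:differentialcomp} gives the uncolored quasi-stabilized differential as
\[
\d_{\bar{\cH}}=\begin{pmatrix}\d_{\cH}& U_w+U_{w'}\\ V_z+V_{z'}& \d_{\cH}\end{pmatrix},
\]
with respect to the splitting of generators into the $\theta^+$ and $\theta^-$ components.

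Next I would compute $\tilde{\Psi}_{z'}$ in this same matrix model by formally differentiating $\d_{\bar{\cH}}$ with respect to $V_{z'}$, exactly as in Lemma \ref{SpmPsicommutatoradjacent}. Since $\d_{\cH}$ only involves the variables of the unstabilized diagram, differentiating the diagonal entries produces $\Psi_{z'}$ (on $CFL^\infty_{UV}(\cH,\frs)$), while the off-diagonal entry $V_z+V_{z'}$ contributes a constant $1$ in the lower-left slot and the entry $U_w+U_{w'}$ contributes nothing. Hence
\[
\tilde{\Psi}_{z'}\simeq \begin{pmatrix}\Psi_{z'}& 0\\ 1& \Psi_{z'}\end{pmatrix}.
\]

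Using the matrix expressions $S^+_{w,z}=\begin{pmatrix}1\\0\end{pmatrix}$ and $S^-_{w,z}=\begin{pmatrix}0& 1\end{pmatrix}$ from Equation \eqref{eq:S^pmmatrixnotation}, the composition becomes
\[
S^-_{w,z}\,\tilde{\Psi}_{z'}\, S^+_{w,z}\simeq \begin{pmatrix}0& 1\end{pmatrix}\begin{pmatrix}\Psi_{z'}& 0\\ 1& \Psi_{z'}\end{pmatrix}\begin{pmatrix}1\\ 0\end{pmatrix}=\begin{pmatrix}0& 1\end{pmatrix}\begin{pmatrix}\Psi_{z'}\\ 1\end{pmatrix}=1,
\]
which gives the desired chain homotopy equivalence. (The same computation with $\tilde{\Psi}_z$, whose matrix is $\begin{pmatrix}0&0\\1&0\end{pmatrix}$, gives the analogous statement if one prefers to read $z'=z$.)

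There is essentially no obstacle here beyond unpacking matrix notation; all the real work has been done in establishing the form of $\d_{\bar{\cH}}$ in Proposition \ref{lem:differentialcomp} and in proving that the maps $S_{w,z}^\pm$ are well-defined invariants in Theorem \ref{thm:A}, so that we are free to perform the computation on this particular model. The chain homotopy is $\frP$-filtered and $\Z_2[U_\frP]$-equivariant because each map in the composition is, and because the homotopy $\tilde{\Psi}_{z'}\simeq \bigl(\begin{smallmatrix}\Psi_{z'}&0\\1&\Psi_{z'}\end{smallmatrix}\bigr)$ is obtained by differentiating the chain homotopy from Lemma \ref{lem:PhiPsiinvaraints} with respect to $V_{z'}$, which preserves both properties.
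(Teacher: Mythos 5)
Your proof is correct and takes essentially the same route as the paper's: compute the quasi-stabilized differential via Proposition \ref{lem:differentialcomp}, obtain $\tilde{\Psi}_{z'}$ by formal differentiation, and read off the composition in the matrix notation from Equation \eqref{eq:S^pmmatrixnotation}. You fill in the full matrix for $\tilde{\Psi}_{z'}$ and handle both cases ($z'=z$ or $z'=z''$), whereas the paper simply notes that the lower-left entry is $1$ and the rest is irrelevant to the matrix product; this is a difference of exposition, not of method.
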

\begin{proof}This follows from our usual strategy. Pick a diagram $\cH$ for $(L,\ve{w},\ve{z})$ and let $\bar{\cH}$ denote a diagram which has been quasistabilized at $w$ and $z$. Let $z''$ and $w''$ denote the basepoints adjacent to $w$ and $z$ on $\bL$. Using Proposition \ref{lem:differentialcomp} we have that
\[\d_{\bar{\cH}}=\begin{pmatrix}\d_{\cH}&U_{w}+U_{w''}\\
V_{z}+V_{z''}& \d_{\cH}
\end{pmatrix}.\] By assumption, either $z'=z$ or $z'=z''$ (but not both). In both cases, we have that
\[\Psi_{z'}=\left(\frac{d}{d V_{z'}} \d_{\bar{\cH}}\right)=\begin{pmatrix}*& *\\1 &*
\end{pmatrix},\] where the $*$ terms are unimportant. Using the matrix notation from Equation \eqref{eq:S^pmmatrixnotation}, we get the desired equality immediately from matrix multiplication. 
\end{proof}

The reader should compare the following to \cite{Smovingbasepoints}*{Lem. 4.4.}.

\begin{lem}\label{lem:psi^2=0}We have $\Psi_z^2\simeq 0$ and $\Phi_w^2\simeq 0$, as $\frP$-filtered maps of $\Z_2[U_\frP]$-modules.
\end{lem}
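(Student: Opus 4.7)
The plan is to exhibit an explicit chain homotopy realizing $\Psi_z^2 \simeq 0$, obtained as a ``divided second derivative'' of $\d$ with respect to $V_z$. Specifically, I would define
\[ H_z(\ve{x}) := V_z^{-2} \sum_{\ve{y}\in \bT_\alpha\cap \bT_\beta} \sum_{\substack{\phi\in \pi_2(\ve{x},\ve{y})\\ \mu(\phi)=1}} \binom{n_z(\phi)}{2}\, \#\hat{\cM}(\phi)\, U_{\ve{w}}^{n_{\ve{w}}(\phi)} V_{\ve{z}}^{n_{\ve{z}}(\phi)}\cdot \ve{y}, \]
where $\binom{n_z(\phi)}{2}$ is computed as an integer and then reduced mod $2$. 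Since $\binom{n_z(\phi)}{2}\equiv 0\pmod 2$ whenever $n_z(\phi)\le 1$, the effective exponent $n_z(\phi)-2$ on $V_z$ is always nonnegative, so $H_z$ is manifestly $\frP$-filtered and $\Z_2[U_\frP]$-equivariant.

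The key claim is the identity $\d H_z + H_z \d = \Psi_z^2$ on $CFL_{UV}^\infty(\cH,\sigma,\frP,\frs)$. To prove it, I would rerun the usual argument that the boundary of the compactification of the $1$-dimensional moduli space $\hat{\cM}(\phi)$ for a Maslov index $2$ class $\phi$ has algebraic count zero mod $2$, yielding
\[ \sum_{\substack{\phi_1 * \phi_2 = \phi\\ \mu(\phi_i)=1}} \#\hat{\cM}(\phi_1)\cdot\#\hat{\cM}(\phi_2) \;\equiv\; \#\{\text{Maslov index $2$ boundary degenerations in class $\phi$}\}\pmod 2. \]
Weighting both sides by $\binom{n_z(\phi)}{2}\, U_{\ve{w}}^{n_{\ve{w}}(\phi)} V_{\ve{z}}^{n_{\ve{z}}(\phi)-2}$, summing over $\phi$ and $\ve{y}$, and applying the integer identity
\[ \binom{a+b}{2} = \binom{a}{2} + ab + \binom{b}{2} \]
with $a=n_z(\phi_1)$ and $b=n_z(\phi_2)$, the left-hand side splits as $\d H_z + H_z \d + \Psi_z^2$: the $\binom{a}{2}$ and $\binom{b}{2}$ terms reassemble with the accompanying $\d$-disks into $H_z\d$ and $\d H_z$, while the cross term $ab = n_z(\phi_1)\, n_z(\phi_2)$ is exactly the coefficient appearing in $\Psi_z\circ\Psi_z$.

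For the right-hand side, Theorem \ref{thm:boundarydegencount} says every Maslov index $2$ boundary degeneration has domain equal to a single region of $\Sigma\setminus\ve{\alpha}$ or $\Sigma\setminus\ve{\beta}$, hence has multiplicity at most $1$ at any basepoint. So $n_z(\phi)\in\{0,1\}$ for such $\phi$, $\binom{n_z(\phi)}{2}=0$, and the right-hand side vanishes. Thus $\d H_z + H_z \d = \Psi_z^2$ already in $CFL_{UV,0}^\infty$, and this identity descends to $CFL_{UV}^\infty(\cH,\sigma,\frP,\frs)$ after tensoring with $\cL$, giving $\Psi_z^2\simeq 0$ as $\frP$-filtered $\Z_2[U_\frP]$-module maps. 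The statement $\Phi_w^2\simeq 0$ follows by the identical argument with the chain homotopy $H_w(\ve{x}) = U_w^{-2}\sum\binom{n_w(\phi)}{2}\#\hat{\cM}(\phi)\, U_{\ve{w}}^{n_{\ve{w}}(\phi)}V_{\ve{z}}^{n_{\ve{z}}(\phi)}\cdot\ve{y}$, noting again that Maslov index $2$ boundary degenerations have $n_w\in\{0,1\}$. The one place requiring care is verifying that the binomial splitting $\binom{a+b}{2} = \binom{a}{2} + ab + \binom{b}{2}$ reassembles correctly into the three operator terms $\d H_z$, $H_z\d$, and $\Psi_z^2$ (i.e.\ that the bookkeeping over ordered pairs $(\phi_1,\phi_2)$ matches operator composition), but this is routine.
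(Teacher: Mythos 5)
Your argument is correct, and it is essentially the paper's own proof: the paper simply cites \cite{HFplusTQFT}*{Prop.~14.13}, whose argument is exactly this ``divided second derivative'' chain homotopy $H_z=\tfrac{1}{2}\tfrac{d^2}{dV_z^2}\d$ (weights $\binom{n_z(\phi)}{2}$), with the boundary-degeneration ends killed because Theorem \ref{thm:boundarydegencount} forces $n_z,n_w\le 1$ on Maslov index $2$ degeneration classes. The binomial identity bookkeeping and the filtration/equivariance check you flag are exactly the routine points, so nothing is missing.
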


\begin{proof}The proof follows identically to the proof of \cite{HFplusTQFT}*{Prop. 14.13.}.
\end{proof}

\section{Basepoint moving maps}

\label{sec:basepointmovingmaps}
In this section, we compute several basepoint moving maps. The procedure for computing maps induced by moving basepoints is in a similar spirit to the author's computation of the $\pi_1$-action on the Heegaard Floer homology of a closed three manifold in \cite{HFplusTQFT}. We first compute the effect of moving basepoints along a small arc on a link component via a model computation. We then use this to prove Theorem \ref{thm:B}, the effect of moving all of the basepoints on a link component in one full loop. The final computation is Theorem \ref{thm:D}, where we compute the effect on certain colored complexes of moving each $\ve{w}$-basepoint to the next $\ve{w}$-basepoint, and moving each $\ve{z}$-basepoint to the next $\ve{z}$-basepoint.

\subsection{Moving basepoints along an arc}

Suppose we $Y^3$ is a three manifold with embedded multipointed link $\bL_0=(L,\ve{w}_0,\ve{z}_0)$, though we allow the case that one of the components of $\bL_0$ has no basepoints. Suppose  that $z,w,z',w'$ are all points on a single component of $L\setminus (\ve{w}_0\cup \ve{z}_0)$, appearing in that order according to the orientation of $\bL$. Let 
\[\ve{w}=\ve{w}_0\cup \{w\}, \qquad \ve{z}=\ve{z}_0\cup \{z\}, \qquad \ve{w}'=\ve{w}_0\cup \{w'\}, \qquad \text{ and } \qquad \ve{z}'=\ve{z}_0\cup \{z'\}.\] Finally assume that $(L,\ve{w},\ve{z})$ has basepoints in each component of $L$. There is an isotopically unique diffeomorphism of $Y$ which maps $L$ to itself and fixes $\ve{w}_0\cup \ve{z}_0$ and maps $w$ to $w'$ and $z$ to $z'$, which is isotopic to the identity relative $\ve{w}_0\cup \ve{z}_0$ through isotopies which map $L$ to itself. Let $\varsigma_0$ denote this diffeomorphism. The diffeomorphism $\varsigma_0$ induces a map
\[(\varsigma_0)_*:CFL^\infty_{UV}(Y,L,\ve{w},\ve{z},\frs)\to CFL^\infty_{UV}(Y,L,\ve{w}',\ve{z}',\frs).\]

\begin{lem}\label{lem:movebasepoints}The induced map $(\varsigma_0)_*$ is filtered chain homotopic to
\[(\varsigma_0)_*\simeq S_{w,z}^- \Psi_{z'} S_{w',z'}^+.\]
\end{lem}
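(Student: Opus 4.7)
The plan is to realize $\varsigma_0$ explicitly via an ambient isotopy and to decompose the induced chain map through a doubly-quasi-stabilized diagram, matching it with a direct computation of the right-hand side.

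First I would set up diagrams. Pick $\cH_0 = (\Sigma, \ve{\alpha}, \ve{\beta}, \ve{w}_0, \ve{z}_0)$ for $\bL_0$ and two nearby points $p, p' \in \Sigma$ lying in the component of $\Sigma \setminus \ve{\alpha}$ that contains the $\bL_0$-basepoints $w_-$ and $z_+$ flanking the arc of $L$ on which $z, w, z', w'$ lie. Let $\cH$ (respectively $\cH'$) be the result of quasi-stabilizing $\cH_0$ at $p$ (respectively $p'$), and let $\bar{\cH}$ be the diagram obtained by performing both quasi-stabilizations simultaneously, which is a diagram for $(L, \ve{w} \cup \{w'\}, \ve{z} \cup \{z'\})$ whose basepoints on $L$ are cyclically ordered $\ldots, w_-, z, w, z', w', z_+, \ldots$.

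Second, I would compute the right-hand side explicitly. Iterated application of Proposition \ref{lem:differentialcomp} produces a closed-form matrix for $\d_{\bar{\cH}}$ in the $\{\theta^{\pm}\} \otimes \{(\theta')^{\pm}\}$ decomposition. Since $V_{z'}$ appears in $\d_{\bar{\cH}}$ only through the term $V_{z'} + V_{z_+}$ coming from the second quasi-stabilization, the formal derivative $\Psi_{z'} = \tfrac{d}{d V_{z'}} \d_{\bar{\cH}}$ sends $(\theta')^+ \mapsto (\theta')^-$ and annihilates $(\theta')^-$. Combining with the tautological formulas for $S_{w',z'}^+$ and $S_{w,z}^-$ immediately gives the composition $S_{w,z}^- \Psi_{z'} S_{w',z'}^+$ as an explicit chain map $CFL(\cH) \to CFL(\cH')$.

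Third, I would identify $(\varsigma_0)_*$ with this composition. The idea is to realize $\varsigma_0$ by an ambient isotopy $\phi_t$ supported near the arc of $L$ from $z$ to $w'$, and to decompose the induced map in the filtered chain homotopy type by factoring through $\bar{\cH}$: adding a quasi-stabilization at $(z', w')$ via $S_{w',z'}^+$, realizing the effect of $\phi_t$ on the doubly-quasi-stabilized complex, and destabilizing at $(z, w)$ via $S_{w,z}^-$. The claim is that the middle factor is chain homotopic to $\Psi_{z'}$, since as the quasi-stabilization region for $(z, w)$ is dragged along $L$ past the basepoint $z'$, the change-of-diagrams map on $\bar{\cH}$ picks up precisely the formal $V_{z'}$-derivative of the differential. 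This mirrors the analogous computation in the closed three-manifold setting of \cite{HFplusTQFT}, where the relative-homology map $A_\lambda$ plays the role now played by $\Psi_{z'}$, and the key inputs are the doubly-quasi-stabilized triangle-map computation of Theorem \ref{thm:doublequasitriangle} together with the naturality Lemmas \ref{lem:independentofp} and \ref{lem:quasistabindependentofalphas}.

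The principal obstacle will be realizing the isotopy $\phi_t$ concretely as a sequence of Heegaard moves and identifying the induced chain map on $\bar{\cH}$ with $\Psi_{z'}$. This requires a careful Maslov-index analysis in the spirit of Proposition \ref{lem:differentialcomp} and Theorem \ref{thm:doublequasitriangle}, now applied to a one-parameter family of diagrams interpolating between $\cH$ and $\cH'$. Once this identification is secured, the chain-homotopy equivalence $(\varsigma_0)_* \simeq S_{w,z}^- \Psi_{z'} S_{w',z'}^+$ follows.
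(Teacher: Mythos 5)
Your step 2 contains an error that would make the whole argument fail. You claim that $V_{z'}$ appears in $\d_{\bar{\cH}}$ only through the term $V_{z'}+V_{z_+}$ arising from the quasi-stabilization at $(w',z')$. But on the arc the cyclic order of basepoints is $w_-,z,w,z',w',z_+$, so $z'$ is also the $\ve{z}$-basepoint immediately following $w$. Hence Proposition \ref{lem:differentialcomp} applied to the $(w,z)$ quasi-stabilization produces a term $V_z+V_{z'}$ as well, and $\Psi_{z'}=\tfrac{d}{dV_{z'}}\d_{\bar{\cH}}$ picks up \emph{two} contributions: one acting by $(\theta')^+\mapsto(\theta')^-$ and one acting by $\theta^+\mapsto\theta^-$ (these are the two domains in Figure \ref{fig::11} of the paper). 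If you drop the second contribution, your composition evaluates to
\[
S^-_{w,z}\,\Psi_{z'}\,S^+_{w',z'}(\ve{x}\times\theta^+)=S^-_{w,z}\big(\ve{x}\times\theta^+\times(\theta')^-\big)=0,
\]
which contradicts $(\varsigma_0)_*(\ve{x}\times\theta^+)=\ve{x}\times(\theta')^+$. The missing $\theta^+\mapsto\theta^-$ term is exactly what makes the formula come out right. Relatedly, the "iterated" application of Proposition \ref{lem:differentialcomp} is not legitimate as written: since $\alpha_s$ and $\alpha_s'$ live in the same component of $\Sigma\setminus\ve{\alpha}$, no single almost complex structure is simultaneously stretched around both necks, and the paper has to invoke the change-of-almost-complex-structure computation of Lemma \ref{lem:quasistabilizationcommutesII} (inserting a transition map $\Phi_{J_s'\to J_s}$) to compute $S^-_{w,z}$, $\Psi_{z'}$, and $S^+_{w',z'}$ on a single diagram.

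Your step 3 also diverges from the paper in a way that introduces a gap. You propose to factor $(\varsigma_0)_*$ as $S^-_{w,z}\circ F\circ S^+_{w',z'}$ with $F$ "the effect of $\phi_t$ on the doubly quasi-stabilized complex," and then identify $F\simeq\Psi_{z'}$. That factorization is precisely the content of the lemma and cannot be taken as given; the isotopy $\phi_t$ does not preserve the basepoint set $\ve{w}\cup\{w,w'\}\cup\ve{z}\cup\{z,z'\}$, so there is no naturally induced endomorphism $F$ of the doubly-stabilized complex to compare to $\Psi_{z'}$. The paper avoids this circularity: on the chosen Figure \ref{fig::10} diagram, $(\varsigma_0)_*$ is tautologically the relabeling $\ve{x}\times\theta^\pm\mapsto\ve{x}\times(\theta')^\pm$, and the proof is a direct computation of the right-hand side (with the corrected $\Psi_{z'}$ and the transition map $\Phi_{J_s'\to J_s}$) showing it equals this relabeling. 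Finally, you do not treat the degenerate case where $w,z$ are the only basepoints on their component; the paper handles this separately by inserting an auxiliary pair $w'',z''$ and applying Lemma \ref{lem:addtrivialstrand} together with the commutation relations.
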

\begin{proof}We first prove the result in the case that the link component containing $w$ and $z$ has at least one extra pair of basepoints. Let $z''$ denote the basepoint occurring immediately after $w'$. In this case, we can pick a diagram like the one shown in Figure \ref{fig::10}, where the dashed lines show two circles along which the almost complex structure will be stretched. In this diagram, we assume that $\alpha_s$ and $\alpha_s'$ each bound disks on $\Sigma$ and that $\alpha_s,\alpha_s',\beta_0$ and $\beta_0'$ do not intersect any other $\ve{\alpha}$- or $\ve{\beta}$-curves. With this diagram, we can compute all of the maps $\Psi_{z'},S_{w,z}^-,$ and $S_{w',z'}^+$ explicitly. We must be careful though, since we cannot use the same almost complex structure for all of the maps. Instead we will need to use the change of almost complex structure computation from Lemma \ref{lem:quasistabilizationcommutesII}. Let $J_s$ be an almost complex structure which is sufficiently stretched along $c$ to compute compute $S_{w,z}^{\pm}$, and let $J_s'$ be an almost complex structure which is sufficiently stretched along $c'$ to compute $S_{w',z'}^{\pm}$, and assume that both are stretched sufficiently so that the change of almost complex structure map $\Phi_{J_s'\to J_s}$ takes the form described in Lemma \ref{lem:quasistabilizationcommutesII}. We wish to compute $S^-_{w,z}\circ \Psi_{z'} \circ \Phi_{J_s'\to J_s}\circ S^+_{w',z'}$.

 Let $\theta^{\pm}$ denote the intersection points of $\alpha_s\cap \beta_0$ and let $(\theta')^{\pm}$ denote the intersection points of $\alpha_s'$ and $\beta_0'$.

\begin{figure}[ht!]
\centering
\includegraphics[scale=1.2]{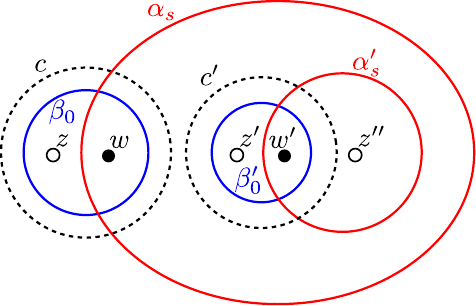}
\caption{A diagram for Lemma \ref{lem:movebasepoints} when we have another basepoint $z''$ on the link component containing $w,z,w'$ and $z'$. The curves $\alpha_s$ and $\alpha'_s$ each bound disks, and $\alpha_s,\alpha_s',\beta_0,\beta_0'$ do not intersect any other $\ve{\alpha}$- or $\ve{\beta}$-curves.} \label{fig::10}
\end{figure}

Using the analysis in Proposition \ref{lem:differentialcomp}, we see that for $J_s$ there are exactly two domains which are the domain of Maslov index one disks $\phi$ which support holomorphic representatives  with $n_{z'}(\phi)>0$. These  domains are shown in Figure \ref{fig::11}. Also every homology disk $\phi$ which has one of these domains has $\#\hat{\cM}_{J_s}(\phi)=1$.

\begin{figure}[ht!]
\centering
\includegraphics[scale=1.2]{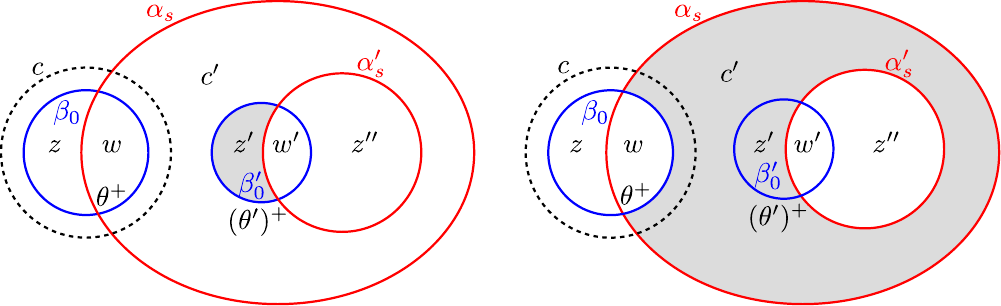}
\caption{The two domains contributing to $\Psi_{z'}$ in Lemma \ref{lem:movebasepoints} for the almost complex structure $J_s$ stretched sufficiently along $c$.} \label{fig::11}
\end{figure}

We wish to show that $S_{w,z}^-\circ \Psi_{z'}\circ \Phi_{J'_s\to J_s} \circ S^+_{w',z'}=(\varsigma_0)_*$, where $\varsigma_0$ is the diffeomorphism induced by simply pushing $w$ and $z$ to $w'$ and $z'$ respectively. To this end, it is sufficient to show that

\[(S_{w,z}^-\circ \Psi_{z'}\circ \Phi_{J'_s\to J_s} \circ S^+_{w',z'})(\ve{x}\times \theta^{\pm})=\ve{x}\times (\theta')^{\pm},\] since $(\varsigma_0)_*(\ve{x}\times \theta^{\pm})=\ve{x}\times (\theta')^{\pm}$.

Homology disks with the left domain in Figure \ref{fig::11} yield a contribution of
\[\ve{x}\times \theta^{\pm}\times (\theta')^{+}\longrightarrow \ve{x}\times \theta^{\pm}\times (\theta')^{-}\] to $\Psi_{z'}$.

Homology disks with the right domain in Figure \ref{fig::11} yield a contribution of
\[\ve{x}\times \theta^{+}\times (\theta')^{\pm}\longrightarrow \ve{x}\times \theta^{-}\times (\theta')^{\pm}\] to $\Psi_{z'}$.

We first compute $(S_{w,z}^-\circ \Psi_{z'}\circ \Phi_{J'_s\to J_s} \circ S^+_{w',z'})(\ve{x}\times\theta^+)$. Using the above computation of $\Psi_{z'}$ and the computation of $\Phi_{J'_s\to J_s}$ from Lemma \ref{lem:quasistabilizationcommutesII}, we have that
\begin{align*}(S_{w,z}^-\circ \Psi_{z'}\circ \Phi_{J'_s\to J_s} \circ S^+_{w',z'})(\ve{x}\times\theta^+)&=(S_{w,z}^-\circ \Psi_{z'}\circ \Phi_{J'_s\to J_s})(\ve{x}\times \theta^+\times (\theta')^+)\\
&=(S_{w,z}^-\circ \Psi_{z'})(\ve{x}\times \theta^+\times (\theta')^+)\\
&= S_{w,z}^-(\ve{x}\times \theta^+\times (\theta')^-+\ve{x}\times \theta^-\times (\theta')^+)\\
&=\ve{x}\times (\theta')^+\end{align*}

We now compute $(S_{w,z}^-\circ \Psi_{z'} \circ \Phi_{J'_s\to J_s}\circ S^+_{w',z'})(\ve{x}\times\theta^-)$. Once again using our previous computation of $\Psi_{z'}$ and Lemma \ref{lem:quasistabilizationcommutesII}, we have that
\begin{align*}(S_{w,z}^-\circ \Psi_{z'} \circ \Phi_{J'_s\to J_s} \circ S^+_{w',z'})(\ve{x}\times\theta^-)&=(S_{w,z}^- \circ \Psi_{z'} \circ\Phi_{J'_s\to J_s})(\ve{x}\times \theta^-\times (\theta')^+)\\
&=(S_{w,z}^-\circ \Psi_{z'})(\ve{x}\times \theta^-\times (\theta')^+)\\
&=S_{w,z}^-(\ve{x}\times \theta^-\times (\theta')^-)\\
&=\ve{x}\times (\theta')^-,\end{align*}
completing the proof of the claim if $\ve{w}$ and $\ve{z}$ each have at least two basepoints on $L$.

We now consider the case that $\bL$ doesn't have any basepoints other than $w$ and $z$. In this case we just introduce two new basepoints $w'',z''$ which are on the component of $L\setminus \{w,w',z,z'\}$ which is going from $w'$ to $z$.  Note that $\varsigma_0$ is isotopic relative $\{w,z\}$ to a diffeomorphism which fixes $w''$ and $z''$. Hence $(\varsigma_0)_*S_{w'',z''}^-=S_{w'',z''}^-(\varsigma_0)_*$. We just compute that
\begin{align*}(\varsigma_0)_*& \simeq(\varsigma_0)_* (S_{w'',z''}^- \Psi_{z''} S_{w'',z''}^+)&&\text{(Lemma \ref{lem:addtrivialstrand})}\\
&\simeq S_{w'',z''}^-(\varsigma_0)_*\Psi_{z''} S_{w'',z''}^+&&\text{(observation above)}\\
&\simeq S_{w'',z''}^-(S_{w,z}^- \Psi_{z'} S_{w',z'}^+)\Psi_{z''} S_{w'',z''}^+&&\text{(previous case)}\\
&\simeq(S_{w,z}^- \Psi_{z'} S_{w',z'}^+)(S_{w'',z''}^-\Psi_{z''} S_{w'',z''}^+)&&(\text{Theorem } \ref{thm:quasistabilizationcommutes}, \text{ Lemma } \ref{Spm-Psicommutatornotadjacent})\\
&\simeq S_{w,z}^- \Psi_{z'} S_{w',z'}^+,&&\text{(Lemma \ref{lem:addtrivialstrand})}
\end{align*}

as we wanted.

\end{proof}

\subsection{Sarkar's formula for moving basepoints in a full twist around a link component}
\label{sec:movingmanybasepoints}

In this section, we prove Theorem \ref{thm:B}, which is Sarkar's conjectured formula for the effect of moving basepoints on a link component in a full twist around the link component for the full link Floer complex. The main technical tool is Lemma \ref{lem:movebasepoints}, which computes the effect of moving basepoints on a small arc on a link component. By writing the diffeomorphism of a full twist as a composition of many smaller moves of the previous form, we will obtain Sarkar's formula.

\begin{customthm}{\ref{thm:B}}Suppose $\varsigma$ is the diffeomorphism corresponding to a positive Dehn twist around a link component $K$ of $\bL$. Suppose that the basepoints on $K$ are $w_1,z_1,\dots, w_n, z_n$. The induced map $\varsigma_*$ on $CFL_{UV}^\infty(Y,\bL,\sigma, \frP,\frs)$ has the $\frP$-filtered $\Z_2[U_\frP]$ chain homotopy type
\[\varsigma_*\simeq 1+\Phi_K\Psi_K\] where 
\[\Phi_K= \sum_{j=1}^n \Phi_{w_j}\qquad \text{ and } \qquad \Psi_K=\sum_{j=1}^n \Psi_{z_j}.\]
\end{customthm}

To the reader who is not interested in colorings, we note that one can just take $\frP=\ve{w}\cup|L|$, where $|L|$ denotes the components of $L$.

In this section, we also introduce some new formalism to make the computation easier. The maps $\Psi_{z'}$ and $S_{w,z}^\pm$ interact strangely (e.g. Lemma \ref{SpmPsicommutatoradjacent}), which leads to challenging and messy algebra if we are not careful. Suppose that $A$ is an arc on $L$ between two $\ve{w}$-basepoints which share the same color. We define the map
\[\Psi_A=\sum_{z\in A\cap \ve{z}} \Psi_z.\] The maps $\Psi_A$ can be thought of as defining an action of \[\Lambda^* H_1(L/(\ve{w},\sigma);\Z)\] on $CFL_{UV}^\infty(Y,\bL,\sigma, \frP,\frs)$, where $L/(\ve{w},\sigma)$ denotes the space obtained by identifying two $\ve{w}$-basepoints if they share the same color. This formalism is intriguing, but we will only have use for maps $\Psi_A$ for arcs $A$ between $\ve{w}$-basepoints of the same color.

Given an arc $A$ between two $\ve{w}$-basepoints, we define an \textbf{endpoint} of $A$ to be a basepoint $w$ such that the sets $\bar{K\setminus A}$ and $\bar{A}$ both contain $w$ (so if $A=K$, $A$ has no endpoints).

We now proceed to prove some basic properties of the maps $\Psi_A$, all of which are recastings of previous lemmas proven about the maps $\Psi_z$. 

\begin{lem}\label{lem:quasistabcommuteswithpsi} We have \[S_{w,z}^{\pm} \Psi_A+\Psi_A S_{w,z}^{\pm}\simeq 0,\] as long as $w$ is not an endpoint of $A$.
\end{lem}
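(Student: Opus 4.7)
The plan is to expand $\Psi_A = \sum_{z'\in A\cap \ve{z}}\Psi_{z'}$ term by term and invoke the commutator relations from Lemmas~\ref{Spm-Psicommutatornotadjacent} and~\ref{SpmPsicommutatoradjacent}. One must be careful because the sum defining $\Psi_A$ depends on which complex we are working on: on the stabilized side, the new $\ve{z}$-basepoint $z$ lies in $A$ precisely when the inserted pair $(z,w)$ sits in the interior of $A$, whereas on the unstabilized side $\Psi_A$ only ranges over pre-existing $\ve{z}$-basepoints. Since by hypothesis $w$ is not an endpoint of $A$, and since $z$ and $w$ are consecutive with no other basepoints between them (and $z$ itself cannot be an endpoint because endpoints of $A$ are $\ve{w}$-basepoints), the pair $(z,w)$ must lie either entirely in the interior of $A$ or entirely in $\bar{K\setminus A}$.

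In the exterior case, $A\cap \ve{z}$ is the same set on both sides of $S_{w,z}^{\pm}$, and every $z'\in A\cap \ve{z}$ is separated from $w$ by at least one endpoint of $A$, hence is not adjacent to $w$ on the stabilized link. Lemma~\ref{Spm-Psicommutatornotadjacent} then gives $S_{w,z}^{\pm}\Psi_{z'}+\Psi_{z'}S_{w,z}^{\pm}\simeq 0$ for each summand, and summing over $z' \in A\cap \ve{z}$ yields the anticommutation.

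In the interior case all pre-existing $\ve{z}$-basepoints in $A$ are non-adjacent to $w$ on the stabilized link except for a single distinguished one $z^*$, namely the $\ve{z}$-basepoint immediately following $w$ in the link orientation, which must lie in $A$ because $w$ is interior to $A$ and there is no intervening $\ve{w}$-basepoint. For $z'\ne z^*$ Lemma~\ref{Spm-Psicommutatornotadjacent} again produces an anticommutation with $S_{w,z}^{\pm}$, while for $z^*$ Lemma~\ref{SpmPsicommutatoradjacent} yields $S_{w,z}^{+}\Psi_{z^*}+\Psi_{z^*}S_{w,z}^{+}\simeq \Psi_z S_{w,z}^{+}$. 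On the other hand, on the stabilized side $\Psi_A$ contains an extra summand $\Psi_z$ not present on the unstabilized side, contributing an additional $\Psi_z S_{w,z}^{+}$; these two copies of $\Psi_z S_{w,z}^{+}$ cancel over $\Z_2$, giving the desired anticommutation. The $S_{w,z}^{-}$ statement follows symmetrically from the dual identity in Lemma~\ref{SpmPsicommutatoradjacent}. The main bookkeeping obstacle is simply keeping straight that $\Psi_A$ denotes different endomorphisms on the two complexes, and isolating the unique pre-existing $\ve{z}$-basepoint on $A$ that becomes adjacent to $w$ after quasi-stabilization; once that is done, the proof is a direct term-by-term application of the two commutator lemmas.
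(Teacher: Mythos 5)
Your proof is correct and takes the same route as the paper, which simply cites Lemmas \ref{Spm-Psicommutatornotadjacent} and \ref{SpmPsicommutatoradjacent} without spelling out the details. The two bookkeeping points you isolate---that the inserted pair $(z,w)$ lies entirely inside $A$ or entirely inside $\bar{K\setminus A}$ because the open segment between the flanking $w''$ and $z''$ contains no other basepoints and the endpoints of $A$ are $\ve{w}$-basepoints, and that in the interior case the extra summand $\Psi_z$ appearing in $\Psi_A$ on the stabilized side exactly cancels the error term $\Psi_z S_{w,z}^+$ (resp.\ $S_{w,z}^-\Psi_z$) produced by the adjacent $\ve{z}$-basepoint $z^*$ via Lemma \ref{SpmPsicommutatoradjacent}---are precisely what the paper leaves implicit in the word ``immediately.''
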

\begin{proof} This follows immediately from Lemmas \ref{Spm-Psicommutatornotadjacent} and \ref{SpmPsicommutatoradjacent}.
\end{proof}

\begin{lem}\label{lem:Psi_Ascommute}If $A$ and $A'$ are two arcs between $\ve{w}$-basepoints, then
\[\Psi_A\Psi_{A'}+\Psi_{A'}\Psi_A\simeq 0.\]
\end{lem}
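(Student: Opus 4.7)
The plan is to reduce the statement to the commutator relation $\Psi_z \Psi_{z'} + \Psi_{z'} \Psi_z \simeq 0$ which was already established in Lemma \ref{lem:somemorecommutators}. Since $\Psi_A$ and $\Psi_{A'}$ are by definition finite sums $\Psi_A = \sum_{z \in A \cap \ve{z}} \Psi_z$ and $\Psi_{A'} = \sum_{z' \in A' \cap \ve{z}} \Psi_{z'}$, bilinearity of composition gives
\[
\Psi_A \Psi_{A'} + \Psi_{A'} \Psi_A \;=\; \sum_{z \in A \cap \ve{z}}\sum_{z' \in A' \cap \ve{z}} \bigl(\Psi_z \Psi_{z'} + \Psi_{z'} \Psi_z\bigr).
\]

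Then I would invoke Lemma \ref{lem:somemorecommutators} termwise: each summand $\Psi_z \Psi_{z'} + \Psi_{z'} \Psi_z$ is $\frP$-filtered $\Z_2[U_\frP]$-chain homotopic to zero, via some chain homotopy $H_{z,z'}$. Setting $H = \sum_{z,z'} H_{z,z'}$ yields the desired chain homotopy, which remains $\frP$-filtered and $\Z_2[U_\frP]$-equivariant since each $H_{z,z'}$ is.

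There is essentially no obstacle here; the only thing worth double-checking is that the chain homotopies produced in Lemma \ref{lem:somemorecommutators} (which come from formally differentiating the relation of Lemma \ref{lem:del^2=} twice with respect to $V_z$ and $V_{z'}$) are indeed $\frP$-filtered and equivariant, so that summing them preserves these properties. This is immediate from the same observation used in the proof of Lemma \ref{lem:PhiPsiinvaraints}: differentiating an expression with nonnegative powers of the $V_{\ve{z}}$ variables again yields nonnegative powers.
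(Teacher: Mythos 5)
Your proof is correct and takes the same route as the paper, which simply cites Lemma~\ref{lem:somemorecommutators}; you have just spelled out the bilinear expansion $\Psi_A\Psi_{A'}+\Psi_{A'}\Psi_A=\sum_{z,z'}(\Psi_z\Psi_{z'}+\Psi_{z'}\Psi_z)$ and the summing of the individual chain homotopies, which the paper leaves implicit.
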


\begin{proof} This follows from Lemma \ref{lem:somemorecommutators}.
\end{proof}

\begin{lem}\label{lem:Psi_A^2=0}If $A$ is an arc on $L$, then we have
\[\Psi_A^2\simeq 0,\] as filtered equivariant maps.
\end{lem}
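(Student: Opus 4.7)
The plan is to expand $\Psi_A^2$ as a double sum and reduce to the already-established identities for the individual $\Psi_z$. Writing
\[
\Psi_A^2 = \sum_{z, z' \in A \cap \ve{z}} \Psi_z \Psi_{z'} = \sum_{z \in A \cap \ve{z}} \Psi_z^2 + \sum_{\{z,z'\} \subset A\cap \ve{z},\, z\neq z'} \bigl(\Psi_z \Psi_{z'} + \Psi_{z'} \Psi_z\bigr),
\]
where in the last sum we use that we are working over $\Z_2$ to group the $z\neq z'$ terms into unordered pairs.

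First I would handle the diagonal terms: Lemma \ref{lem:psi^2=0} gives a $\frP$-filtered, $\Z_2[U_\frP]$-equivariant chain homotopy $H_z$ with $\Psi_z^2 = \d H_z + H_z \d$ for each $z$. Summing these yields a single filtered, equivariant chain homotopy $\sum_{z\in A\cap \ve{z}} H_z$ which kills the diagonal contribution. Then for the off-diagonal terms, Lemma \ref{lem:somemorecommutators} provides, for each unordered pair $\{z,z'\}$, a filtered equivariant chain homotopy $H_{z,z'}$ realizing $\Psi_z \Psi_{z'} + \Psi_{z'} \Psi_z \simeq 0$. Summing these over all pairs yields the required homotopy for the off-diagonal part.

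Combining the two yields a single $\frP$-filtered, $\Z_2[U_\frP]$-equivariant chain homotopy witnessing $\Psi_A^2 \simeq 0$.

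There is no real obstacle here: the lemma is essentially a formal consequence of Lemmas \ref{lem:somemorecommutators} and \ref{lem:psi^2=0}, together with the characteristic $2$ bookkeeping that lets us combine the two orderings $\Psi_z\Psi_{z'}$ and $\Psi_{z'}\Psi_z$. The only point worth noting is that filtered-ness and $\Z_2[U_\frP]$-equivariance of the resulting chain homotopy follow immediately from the corresponding properties of each $H_z$ and $H_{z,z'}$, since $\frP$-filtered equivariant maps form a module closed under sums.
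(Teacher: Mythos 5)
Your proof is correct and takes essentially the same approach as the paper: expand $\Psi_A^2$ into the double sum, kill the diagonal terms with Lemma \ref{lem:psi^2=0} and the off-diagonal (paired) terms with Lemma \ref{lem:somemorecommutators}, and sum the individual filtered equivariant homotopies. The paper's proof is a one-line version of exactly this argument.
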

\begin{proof}Simply write $\Psi_{A}=\sum_{z\in A\cap \ve{z}} \Psi_z$, multiply out $\Psi_A^2$, then apply Lemmas \ref{lem:somemorecommutators} and \ref{lem:psi^2=0}.
\end{proof}

\begin{lem}\label{lem:psi_Kpsi_A} Suppose $A\subset K$ is an arc between $\ve{w}$-basepoints and let $c(A)$ denote the arc $\bar{K\setminus A}$. Then
\[\Psi_K \Psi_A=\Psi_{c(A)}\Psi_{A}.\]
\end{lem}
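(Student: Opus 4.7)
The plan is straightforward and relies only on the preceding two lemmas. The key observation is that the arc $A$ has its endpoints at $\ve{w}$-basepoints, so the $\ve{z}$-basepoints of $K$ partition as
\[
K\cap \ve{z} \;=\; (A\cap \ve{z})\;\sqcup\;(c(A)\cap \ve{z}).
\]
Since $\Psi_K$, $\Psi_A$, and $\Psi_{c(A)}$ are defined as the sums of $\Psi_z$ over these three index sets respectively, this immediately gives the algebraic identity
\[
\Psi_K \;=\; \Psi_A + \Psi_{c(A)}
\]
(note that we work over $\Z_2$, so no sign issues arise).

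Next I would simply compose with $\Psi_A$ on the right and expand. This yields
\[
\Psi_K\,\Psi_A \;=\; \Psi_A^{\,2} + \Psi_{c(A)}\,\Psi_A.
\]
By Lemma \ref{lem:Psi_A^2=0} we have $\Psi_A^{\,2}\simeq 0$ as $\frP$-filtered $\Z_2[U_\frP]$-equivariant chain homotopy, so the first term vanishes up to the appropriate notion of homotopy, giving $\Psi_K\,\Psi_A \simeq \Psi_{c(A)}\,\Psi_A$, as claimed.

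There is no real obstacle here: the statement is essentially a formal consequence of the decomposition $\Psi_K=\Psi_A+\Psi_{c(A)}$ together with $\Psi_A^{\,2}\simeq 0$. The only thing to double-check is that the endpoint hypothesis (endpoints of $A$ are $\ve{w}$-basepoints, not $\ve{z}$-basepoints) is what guarantees the clean disjoint-union decomposition of $K\cap\ve{z}$; if an endpoint of $A$ were a $\ve{z}$-basepoint, then it would have to be counted in exactly one of $\Psi_A$ or $\Psi_{c(A)}$, and the decomposition identity $\Psi_K=\Psi_A+\Psi_{c(A)}$ would still hold, but the cleanest reading of the statement makes the endpoints $\ve{w}$-basepoints.
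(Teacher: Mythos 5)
Your proof is correct and is essentially identical to the paper's argument: write $\Psi_K=\Psi_A+\Psi_{c(A)}$ (valid because $A$ runs between $\ve{w}$-basepoints, so $K\cap\ve{z}$ partitions between $A$ and $c(A)$), expand $\Psi_K\Psi_A$, and kill the $\Psi_A^2$ term using Lemma \ref{lem:Psi_A^2=0}. Your side remark that the final equality really holds up to $\frP$-filtered equivariant chain homotopy (i.e.\ $\simeq$ rather than $=$) is a fair observation about how the lemma is stated, and your note about why the endpoint convention makes the partition clean is a nice clarification, but neither changes the substance of the argument.
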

\begin{proof} Write $\Psi_K=\Psi_A+\Psi_{c(A)}$ and then use the previous lemma to compute that
\[\Psi_K\Psi_A=(\Psi_A+\Psi_{c(A)})\Psi_A=\Psi_A^2+\Psi_{c(A)}\Psi_A=\Psi_{c(A)}\Psi_A.\]
\end{proof}

\begin{lem}\label{lem:wendpointcommutator} If $w$ is an endpoint of $A$ then we have
\[\Psi_A \Phi_w+\Phi_w \Psi_A\simeq 1.\] If $w$ is not an endpoint of $A$, then we have
\[\Psi_A\Phi_w+\Phi_w\Psi_A\simeq 0.\]
\end{lem}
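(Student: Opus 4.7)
My plan is to first expand $\Psi_A = \sum_{z \in A \cap \ve{z}} \Psi_z$ by $\Z_2$-linearity, reducing the commutator to
\[\Psi_A \Phi_w + \Phi_w \Psi_A = \sum_{z \in A \cap \ve{z}} (\Psi_z \Phi_w + \Phi_w \Psi_z).\]
By Lemma \ref{lem:commutatorsofpsiandphi}, each summand is $\frP$-filtered $\Z_2[U_\frP]$-chain homotopic to $1$ when $w$ and $z$ are adjacent on a link component carrying more than two basepoints, and to $0$ otherwise. The hypothesis that $A$ is an arc between two $\ve{w}$-basepoints guarantees that the component $K$ containing $A$ carries at least two $\ve{w}$-basepoints, hence at least four basepoints in total; so the ``only two basepoints on the component'' alternative in Lemma \ref{lem:commutatorsofpsiandphi} is never triggered. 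The entire commutator is therefore homotopic to the count, modulo $2$, of $\ve{z}$-basepoints in $A$ that are adjacent to $w$.

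From here the argument is pure bookkeeping. If $w$ lies on a component of $L$ different from the one carrying $A$, then no $z \in A \cap \ve{z}$ is adjacent to $w$, and the count is $0$. Otherwise $w \in K$, and since basepoints alternate between $\ve{w}$ and $\ve{z}$ around $K$, the basepoint $w$ has exactly two adjacent $\ve{z}$-basepoints, one on each side. I would then split into three subcases based on the position of $w$ relative to $A$: if $w$ is an endpoint of $A$, then $A$ extends from $w$ in exactly one direction along $K$, so precisely one of the two $\ve{z}$-basepoints adjacent to $w$ lies in $A$, giving parity $1$; if $w$ lies in the interior of $A$, both adjacent $\ve{z}$-basepoints lie in $A$, giving parity $2 \equiv 0$; and if $w \in K \setminus \bar{A}$, neither adjacent $\ve{z}$-basepoint lies in $A$, giving parity $0$.

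Assembling these three subcases yields exactly the dichotomy in the statement: the commutator is homotopic to $1$ when $w$ is an endpoint of $A$, and to $0$ in every other case. There is no real technical obstacle here; the only delicate point is to match the combinatorial definition of ``endpoint of $A$'' (from Section \ref{sec:movingmanybasepoints}) against the parity of adjacent $\ve{z}$-basepoints lying in $A$, so that the unique odd subcase is exactly the endpoint case. This matching is why ``endpoint'' was defined via the condition $w \in \bar{A} \cap \bar{K \setminus A}$: it is precisely the locus at which the two $\ve{z}$-basepoints adjacent to $w$ split, one into $A$ and one into $K \setminus A$.
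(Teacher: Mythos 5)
Your proof is correct in its essentials and obtains both claims from a single counting argument, whereas the paper treats them asymmetrically: the first clause via Lemma~\ref{lem:commutatorsofpsiandphi}, the second by factoring $\Phi_w = S_{w,z}^+ S_{w,z}^-$ and invoking Lemma~\ref{lem:quasistabcommuteswithpsi}. Your unified route avoids the detour through the quasi-stabilization maps; the paper's route for the second clause avoids the position-by-position case analysis that you carry out.

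There is, however, a small gap in the justification. You assert that since $A$ is an arc between two $\ve{w}$-basepoints, the component $K$ containing $A$ must carry at least two $\ve{w}$-basepoints, so the two-basepoint alternative in Lemma~\ref{lem:commutatorsofpsiandphi} never arises. This fails precisely when $A = K$ and $K$ carries a single $\ve{w}$-basepoint and a single $\ve{z}$-basepoint: the paper explicitly allows $A = K$ (in which case $A$ has no endpoints), so the second clause of the lemma is in scope. In that situation $\Psi_A = \Psi_z$ with $z$ adjacent to $w$, and your parity count over $A\cap\ve{z}$ returns $1$, yet the correct answer is $0$ — which is exactly what the exception you dismissed supplies. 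The conclusion of the lemma therefore still holds in this corner case, but not for the reason you give; your three-way split by the position of $w$ relative to $A$ also presupposes that $w$ has two distinct adjacent $\ve{z}$-basepoints, which breaks down here. The fix is cheap: either dispose of $A=K$ separately by citing the exception in Lemma~\ref{lem:commutatorsofpsiandphi} directly, or note that the arcs actually fed into this lemma in the proofs of Theorems~\ref{thm:B} and~\ref{thm:D} are always proper. The paper's factorization through $S_{w,z}^{\pm}$ sidesteps this corner case entirely.
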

\begin{proof} The first claim follows from Lemma \ref{lem:commutatorsofpsiandphi}. The second claim follows from Lemma \ref{lem:quasistabcommuteswithpsi} since we can always write $\Phi_w=S_{w,z}^+S_{w,z}^-$ for $z$ the basepoint immediately preceding $w$ on $\bL$.
\end{proof}

We can now prove Theorem \ref{thm:B}:
\begin{proof}[Proof of Theorem \ref{thm:B}]Let $w_1,z_1,\dots w_n,z_n$ be the basepoints on $K$, in the reverse order that they appear on $K$ according to the orientation of $K$. Let $w_1'z_1',\dots w_n',z_n'$ be new basepoints on $K$ in the interval between $z_n$ and $w_1$. Let $A_j$ be the arc on $K$ from $w_j$ to $w_j'$, as in Figure \ref{fig::20}.

 \begin{figure}[ht!]
\centering
\includegraphics[scale=1.2]{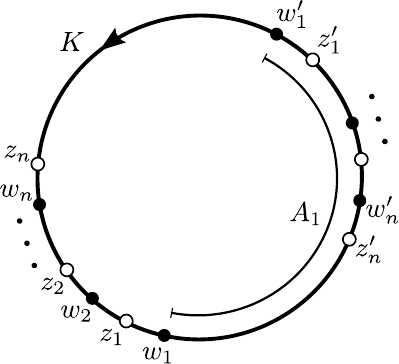}
\caption{The basepoints $z_1,w_1,\dots, z_n,w_n$ and $z_1',w_1',\dots z_n',w_n'$, as well as the arcs $A_i$. \label{fig::20}}
\end{figure}

Write
\[\ve{w}=\{w_1,\dots,w_n\},\qquad \ve{z}=\{z_1,\dots, z_n\},\qquad\ve{w}'=\{w_1',\dots, w_n'\},\quad \text{ and }\quad \ve{z}'=\{z_1',\dots, z_n'\}.\]As usual, we write $\varsigma$ as a composition of two diffeomorphisms $\varsigma=\varsigma_2\circ \varsigma_1$, where $\varsigma_1$ moves the basepoints $\ve{w}$ and $\ve{z}$ to $\ve{w}'$ and $\ve{z}'$ respectively and $\varsigma_2$  moves the basepoints $\ve{w}'$ and $\ve{z}'$ to $\ve{w}$ and $\ve{z}$ respectively. Let $c(A_i)=\bar{K\setminus A_i}$.  Note that by Lemma \ref{lem:addtrivialstrand} we have \begin{equation}
\prod_{j=1}^n (S_{w_j',z_j'}^-\Psi_{A_j} S_{w_j',z_j'}^+)\simeq 1\label{eq:manytrivialstrands}.
\end{equation}

Write $S_{\ve{w},\ve{z}}^{\pm}$ for $\prod_{j=1}^n S_{w_j,z_j}^{\pm}$ and similarly for $S_{\ve{w}',\ve{z}'}^{\pm}$. We compute as follows: 
\begin{align*}\varsigma_*&=(\varsigma_2)_*\circ (\varsigma_1)_*&&\\
&=\bigg(\prod_{j=1}^n S_{w_j',z_j'}^-\Psi_{c(A_j)} S_{w_j,z_j}^+\bigg)\bigg(\prod_{j=1}^n S_{w_j,z_j}^-\Psi_{A_j} S_{w_j',z_j'}^+\bigg)&& \text{(Lemma \ref{lem:movebasepoints})}\\
&=S_{\ve{w}',\ve{z}'}^- \bigg(\prod_{j=1}^n \Psi_{c(A_j)}\bigg)S_{\ve{w},\ve{z}}^+ S_{\ve{w},\ve{z}}^- \bigg(\prod_{j=1}^n \Psi_{A_j}\bigg)S_{\ve{w}',\ve{z}'}^+&& \text{(Lemmas \ref{lem:quasistabcommuteswithpsi},  \ref{thm:quasistabilizationcommutes})}\\
&=S_{\ve{w}',\ve{z}'}^- \bigg(\prod_{j=1}^n \Psi_{c(A_j)}\bigg)\bigg(\prod_{j=1}^n \Phi_{w_j}\bigg) \bigg(\prod_{j=1}^n \Psi_{A_j}\bigg)S_{\ve{w}',\ve{z}'}^+&&\text{(Lemmas \ref{lem:del'appears}, \ref{thm:quasistabilizationcommutes})}\\&=S_{\ve{w'},\ve{z}'}^- \sum_{s\in \{0,1\}^n}\bigg(\prod_{j=1}^n \Phi_{w_j}^{s_j}\bigg)\bigg(\prod_{j=1}^n \Psi_{c(A_j)}^{s_j}\bigg)\bigg(\prod_{j=1}^n \Psi_{A_j}\bigg)S_{\ve{w}',\ve{z}'}^+&& \text{(Lemma \ref{lem:wendpointcommutator})}\\
&=S_{\ve{w}',\ve{z}'}^- \sum_{s\in \{0,1\}^n}\bigg(\prod_{j=1}^n \Phi_{w_j}^{s_j}\bigg)\bigg(\prod_{j=1}^n \Psi_{K}^{s_j}\bigg)\bigg(\prod_{j=1}^n \Psi_{A_j}\bigg)S_{\ve{w}',\ve{z}'}^+&& \text{(Lemmas \ref{lem:Psi_Ascommute}, \ref{lem:psi_Kpsi_A})}\\
&=\sum_{s\in \{0,1\}^n}\bigg(\prod_{j=1}^n \Phi_{w_j}^{s_j}\bigg)\bigg(\prod_{j=1}^n \Psi_{K}^{s_j}\bigg)S_{\ve{w}',\ve{z}'}^-\bigg(\prod_{j=1}^n \Psi_{A_j}\bigg)S_{\ve{w}',\ve{z}'}^+ &&\text{(Lemmas \ref{lem:quasistabcommuteswithpsi}, \ref{Spm-Psicommutatornotadjacent})}\\
&=\sum_{s\in \{0,1\}^n}\bigg(\prod_{j=1}^n \Phi_{w_j}^{s_j}\bigg)\bigg(\prod_{j=1}^n \Psi_{K}^{s_j}\bigg)&&\text{(Equation \eqref{eq:manytrivialstrands})}.\end{align*}

We now note that by Lemma \ref{lem:Psi_A^2=0}, if $s\in \{0,1\}^n$ then

\[\bigg(\prod_{j=1}^n \Psi_{K}^{s_j}\bigg)\simeq 0\] if $s_j$ is nonzero for more than one $j$. Hence the above sum reduces to
\[\varsigma_*\simeq 1+\sum_{j=1}^n \Phi_{w_i}\Psi_K=1+\Phi_K\Psi_K,\] completing the proof.
 \end{proof}

 \subsection{The map associated to a partial twist around a link component}
 
 In this section, we perform an additional basepoint moving map computation and prove Theorem \ref{thm:D}. Suppose that $\bL$ is a multibased link and $K$ is a component with basepoints $z_1,w_1,z_2,w_2,\dots, z_n, $ and $w_n$, appearing in that order. Let $\tau$ be the diffeomorphism induced by twisting $(\frac{1}{n})^{\textrm{th}}$ of the way around $K$, sending $z_i$ to $z_{i+1}$ and $w_i$ to $w_{i+1}$ (with indices taken modulo $n$). In the case that we pick a coloring $(\sigma, \frP)$ where all of the $\ve{w}$-basepoints have the same color, the map $\tau$ induces a map on the complex
 \[CFL_{UV}^\infty(Y,\bL,\sigma, \frP,\frs).\] We have the following:
 
 \begin{customthm}{\ref{thm:D}}Suppose that $\bL$ is an embedded multibased link in $Y$, and $K$ is a component of $\bL$ with basepoints $z_1,w_1,\dots, z_n,$ and $w_n$, appearing in that order. Assume that $n>1$. If $\tau$ denotes the $(\frac{1}{n})^{\textrm{th}}$-twist map, then for a coloring where all $\ve{w}$-basepoints on $K$ have the same color, we have
 \[\tau_*\simeq (\Psi_{z_1}\Phi_{w_1}\Psi_{z_2} \Phi_{w_2} \cdots \Phi_{w_{n-1}} \Psi_{z_n} \Phi_{w_n} )
 +(\Phi_{w_1}\Psi_{z_2} \Phi_{w_2} \cdots \Phi_{w_{n-1}} \Psi_{z_n}).\]
 \end{customthm}
 
 \begin{proof}Let $A_i$ be the arc from $w_i$ to $w_{i+1}$, respecting the orientation of $K$. Let $w'$ and $z'$ be new basepoints in the region between $z_n$ and $w_1$. Let $A'$ denote the arc from $w_n$ to $w'$ and let $A''$ denote the arc from $w'$ to $w_1$. This is illustrated in Figure \ref{fig::31}. 
 
  \begin{figure}[ht!]
\centering
\includegraphics[scale=1.2]{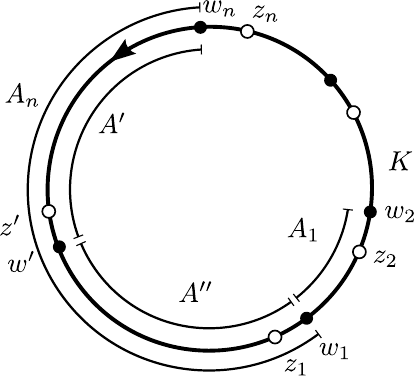}
\caption{The basepoints $z_1,w_1,\dots, z_n,w_n,z',w'$ as well as the arcs $A_i,A'$ and $A''$ from Theorem \ref{thm:D}. \label{fig::31}}
\end{figure}

Using Lemma \ref{lem:movebasepoints} repeatedly, we have that
\[\tau_*\simeq (S_{w'}^-\Psi_{A''}S_{w_1}^+)(S_{w_1}^- \Psi_{A_1} S_{w_2}^+)\cdots ( S_{w_{n-1}}^- \Psi_{A_{n-1}} S_{w_n}^+)(S_{w_n}^- \Psi_{A'} S_{w'}^+).\] Using this, we perform the following computation:
\begin{align*}\tau_* &\simeq S_{w'}^-\Psi_{A''}\Phi_{w_1}\Psi_{A_1} \Phi_{w_2} \cdots \Phi_{w_{n-1}} \Psi_{A_{n-1}} \Phi_{w_n} \Psi_{A'} S_{w'}^+&&\text{(Lemma \ref{lem:del'appears})}\\
&\simeq S_{w'}^-\Psi_{A''}\Phi_{w_1}\Psi_{A_1} \Phi_{w_2} \cdots \Phi_{w_{n-1}} \Psi_{A_{n-1}} (\Psi_{A'}\Phi_{w_n}+1)  S_{w'}^+&& \text{(Lemma \ref{lem:wendpointcommutator})}\\
&\simeq S_{w'}^-\Psi_{A''}\Phi_{w_1}\Psi_{A_1} \Phi_{w_2} \cdots \Phi_{w_{n-1}} \Psi_{A_{n-1}} \Psi_{A'}\Phi_{w_n}  S_{w'}^+
&&\\
&\qquad +S_{w'}^-\Psi_{A''}\Phi_{w_1}\Psi_{A_1} \Phi_{w_2} \cdots \Phi_{w_{n-1}} \Psi_{A_{n-1}}  S_{w'}^+&&\\
&\simeq S_{w'}^-(\Psi_{A''}\Psi_{A'})\Phi_{w_1}\Psi_{A_1} \Phi_{w_2} \cdots \Phi_{w_{n-1}} \Psi_{A_{n-1}} \Phi_{w_n}  S_{w'}^+&&\text{(Lemmas \ref{lem:wendpointcommutator}, \ref{lem:Psi_Ascommute})}\\
&\qquad +S_{w'}^-\Psi_{A''}\Phi_{w_1}\Psi_{A_1} \Phi_{w_2} \cdots \Phi_{w_{n-1}} \Psi_{A_{n-1}}  S_{w'}^+&& \\
& \simeq S_{w'}^-(\Psi_{A''}\Psi_{A_n})\Phi_{w_1}\Psi_{A_1} \Phi_{w_2} \cdots \Phi_{w_{n-1}} \Psi_{A_{n-1}} \Phi_{w_n}  S_{w'}^+&&\text{(Lemma \ref{lem:Psi_A^2=0})}\\
&\qquad +S_{w'}^-\Psi_{A''}\Phi_{w_1}\Psi_{A_1} \Phi_{w_2} \cdots \Phi_{w_{n-1}} \Psi_{A_{n-1}}  S_{w'}^+&& \\
&\simeq (S_{w'}^-\Psi_{A''}S_{w'}^+)\Psi_{A_n}\Phi_{w_1}\Psi_{A_1} \Phi_{w_2} \cdots \Phi_{w_{n-1}} \Psi_{A_{n-1}} \Phi_{w_n}  &&\text{(Lemma \ref{lem:quasistabcommuteswithpsi})}\\
&\qquad +(S_{w'}^-\Psi_{A''}S_{w'}^+)\Phi_{w_1}\Psi_{A_1} \Phi_{w_2} \cdots \Phi_{w_{n-1}} \Psi_{A_{n-1}}  && \\
&\simeq \Psi_{A_n}\Phi_{w_1}\Psi_{A_1} \Phi_{w_2} \cdots \Phi_{w_{n-1}} \Psi_{A_{n-1}} \Phi_{w_n}  &&\text{(Lemma \ref{lem:addtrivialstrand})}\\
&\qquad +\Phi_{w_1}\Psi_{A_1} \Phi_{w_2} \cdots \Phi_{w_{n-1}} \Psi_{A_{n-1}}  && 
\end{align*}
 completing the proof since by definition $\Psi_{A_i}=\Psi_{z_{i+1}}$ on the complex $CFL^{\infty}_{UV}(Y, \bL,\sigma,\frP,\frs)$. \end{proof}
\begin{bibdiv} 
\begin{biblist}

\bib{BEHWZCompactness}{article}{
	title={Compactness results in Symplectic Field Theory},
	author={Bourgeois, F.},
	author={Eliashberg, Y.},
	author={Hofer, H.},
	author={Wysocki, K.},
	author={Zehnder, E.},
	journal={Geometry and Topology},
	volume={7},
	date={2003},
	pages={799-888},
}

\bib{HMInvolutive}{article}{
	title={Involutive Heegaard Floer homology},
	author={Hendricks, K.},
	author={Manolescu, C.},
	date={2015}
	journal={arXiv:1507.00383[math.GT]},
}

\bib{JCob}{article}{
title={Cobordisms of sutured manifolds and the functoriality of link Floer homology},
author={Juh\'{a}sz, A.},
pages={arXiv:0910.4382v4[math.GT]},
date={2015},
}

\bib{JMConcordance}{article}{
title={Concordance maps in knot Floer homology},
author={Juh\'{a}sz, A.},
author={Marengon, M.}
pages={arXiv:1509.02738[math.GT]},
date={2015},
}

\bib{JTNaturality}{article}{
title={Naturality and mapping class groups in Heegaard Floer homology},
author={Juh\'{a}sz, A.},
author={Thurston, D. P.}
pages={arXiv:1210.4996 [math.GT]},
date={2012},
}

\bib{LCylindrical}{article}{
title={A cylindrical reformulation of Heegaard Floer homology},
author={Lipshitz, R.},
journal={Geom. Topol.},
volume={10},
date={2006},
pages={955-1097},
}

\bib{MOIntSurg}{article}{
title={Heegaard Floer homology and integer surgeries on links},
author={Manolescu, C.},
author={Ozsv\'{a}th, P.},
date={2010},
pages={arXiv:1011.1317v3 [math.GT]},
}

\bib{OhBoundary}{article}{
title={Fredholm theory of holomorphic discs under the perturbation of boundary conditions},
author={Oh. Y-G.},
journal={Math. Z},
volume={222(3)},
date={1996},
pages={505-520},
}

\bib{OSProperties}{article}{
title={Holomorphic disks and topological invariants for closed three-manifolds},
author={Ozsv\'{a}th, P. S.},
author={Szab\'{o}, Z.},
journal={Ann. of Math (2)},
volume={159(3)},
date={2004},
pages={1159-1245},
}

\bib{OSDisks}{article}{
title={Holomorphic disks and topological invariants for closed three manifolds},
author={Ozsv\'{a}th, P. S.},
author={Szab\'{o}, Z.},
journal={Ann. of Math. (2)},
volume={159},
date={2004},
pages={no. 3, 1027-1158},
}

\bib{OSKnots}{article}{
title={Holomorphic disks and knot invariants},
author={Ozsv\'{a}th, P. S.},
author={Szab\'{o}, Z.},
journal={Adv. Math},
volume={186},
date={2004},
pages={58-116},
}

\bib{OSTriangles}{article}{
title={Holomorphic triangles and invariants for smooth four-manifolds},
author={Ozsv\'{a}th, P. S.},
author={Szab\'{o}, Z.},
journal={Adv. in Math.},
volume={202,},
date={2006},
pages={Issue 2, 326-400},
}

\bib{OSMulti}{article}{
title={Holomorphic disks, link invariants, and the multi-variable Alexander polynomial},
author={Ozsv\'{a}th, P. S.},
author={Szab\'{o}, Z.},
journal={Alg. \& Geom. Top.},
volume={8},
date={2008},
pages={615-692},
}

\bib{RasmussenCFr}{article}{
title={Floer homology and knot complements},
author={Rasmussen, J.}
date={2003},
pages={arXiv:0306378 [math.GT]},
}

\bib{SMaslov}{article}{
title={Maslov index formulas for Whitney $n$-gons},
author={Sarkar, S.},
journal={J. of Symp. Geom.},
volume={9},
date={2011},
pages={Number 2, 251-270},
}

\bib{Smovingbasepoints}{article}{
title={Moving basepoints and the induced automorphisms of link Floer homology},
author={Sarkar, S.}
date={2011},
pages={arXiv:1109:2168 [math.GT]},
}

\bib{hatHFTQFT}{article}{
title={A graph TQFT for hat Heegaard Floer homology},
author={Zemke, I.}
date={2015},
pages={arXiv:1010.2808 [math.GT]},
}

\bib{HFplusTQFT}{article}{
title={Graph cobordisms and Heegaard Floer homology},
author={Zemke, I.}
date={2015},
pages={arXiv:1512.01184 [math.GT]},
}

\end{biblist}
\end{bibdiv}

\end{document}